\title[Isometric deformation of wave fronts]{%
  Isometric deformations of wave 
  fronts at non-degenerate singular points
}
\date{April 15, 2020}
\author{A.~Honda}
\address[Atsufumi Honda]{
   Department of Applied Mathematics, 
   Faculty of Engineering, Yokohama National University,
   79-5 Tokiwadai, Hodogaya, Yokohama 240-8501, Japan
}
\email{honda-atsufumi-kp@ynu.ac.jp}
\author{K.~Naokawa}
\address[Kosuke Naokawa]{%
   Department of Computer Science, 
   Faculty of Applied Information Science,
   Hiroshima Institute of Technology,  
   2-1-1 Miyake, Saeki, Hiroshima, 731-5193, Japan
}
\email{k.naokawa.ec@cc.it-hiroshima.ac.jp}
\author{M.~Umehara}
\address[Masaaki Umehara]{%
   Department of Mathematical and Computing Sciences,
   Tokyo Institute of Technology
   2-12-1-W8-34, O-okayama, Meguro-ku,
   Tokyo 152-8552, Japan
}
\email{umehara@is.titech.ac.jp}
\author{K.~Yamada}
\address[Kotaro Yamada]{%
   Department of Mathematics,
   Tokyo Institute of Technology,
   O-okayama, Meguro, Tokyo 152-8551,
   Japan
}
\email{kotaro@math.titech.ac.jp}
\subjclass[2010]{Primary 57R45; Secondary 53A05.}
\keywords{%
   wave front, 
   isometric deformation, 
   Kossowski metric, 
   cuspidal edge, swallowtail,
   cuspidal cross cap
}
\thanks{%
  The first author was partially supported by the
  Grant-in-Aid for Young Scientists (B), No.~16K17605,
  the second author 
  was partially supported by 
  Grant-in-Aid for Young Scientists (B), No.~17K14197,
  the third author was partially 
  supported by the Grant-in-Aid for 
  Scientific Research (A) No.\ 262457005, 
  and
  the fourth author by (C) No.\ 26400087 from JSPS.
}%
\numberwithin{equation}{section}
\theoremstyle{plain}
 \newtheorem{theorem}{Theorem}
 \newtheorem{proposition}{Proposition}
 \newtheorem{lemma}{Lemma}
 \newtheorem{corollary}{Corollary}
 \newtheorem{fact}{Fact}
 \newtheorem{introtheorem}{Theorem}
 \newtheorem{introcorollary}[introtheorem]{Corollary}
 \newtheorem{question}{Question}
\theoremstyle{definition}
 \newtheorem{definition}{Definition}
\theoremstyle{remark}
 \newtheorem{remark}{Remark}
 \newtheorem{example}{Example}
 \newtheorem*{ack}{Acknowledgements}
\newcommand{\dy}{\displaystyle}
\newcommand{\vect}[1]{\boldsymbol{#1}}
\newcommand{\inner}[2]{\langle{#1},{#2}\rangle}
\newcommand{\pmt}[1]{{\begin{pmatrix} #1  \end{pmatrix}}}
\newcommand{\R}{\boldsymbol{R}}
\newcommand{\E}{\mathcal{E}}
\newcommand{\X}{\mathcal{X}}
\newcommand{\N}{\mathcal{N}}
\renewcommand{\P}{\mathcal{P}}
\newcommand{\D}{\mathcal{D}}
\renewcommand{\epsilon}{\varepsilon}
\renewcommand{\phi}{\varphi}
\newcommand{\sign}{\operatorname{sign}}
\begin{document}
\begin{abstract}
 Cuspidal edges and swallowtails are typical
 non-degenerate singular points  on wave fronts 
 in the Euclidean $3$-space.
 Their first fundamental forms
 belong to a class of positive semi-definite metrics
 called ``Kossowski metrics''.
 A point where a Kossowski metric
 is not positive definite is called a
 \emph{singular point} or a \emph{semi-definite point} 
 of the metric.
 Kossowski proved that real analytic 
 Kossowski metric germs at their non-parabolic singular points
 (the definition of  ``non-parabolic singular point'' is
 stated in the introduction here)
 can be realized as wave front germs
 (Kossowski's realization theorem).

 On the other hand, in a previous work with K. Saji, 
 the third and the fourth authors
 introduced the notion of
 ``coherent tangent bundle''.
 Moreover, the authors, with M. Hasegawa and K. Saji,
 proved that a Kossowski metric canonically induces 
 an associated coherent tangent bundle.

 In this paper, we shall explain Kossowski's 
 realization theorem from the viewpoint of
 coherent tangent bundles. 
 Moreover, as refinements of it, we
 give a criterion that a given Kossowski metric
 can be realized as the induced metric
 of a germ of cuspidal edge 
 (resp.\ swallowtail  or cuspidal cross cap).
 Several applications of these criteria are given.
 Also, some remaining problems on isometric deformations of
 singularities of analytic maps are given 
 at the end of this paper.
\end{abstract}
\maketitle
\section*{Introduction}
Throughout this paper, we shall treat $C^\infty$-differentiable 
objects as well as real analytic ones.
By the terminology ``$C^r$-differentiable''
we mean real analyticity if $r=\omega$ and $C^\infty$-differentiability 
if $r=\infty$.

\begin{figure}[t!]
 \begin{center}
   \begin{tabular}{c@{\hspace{2cm}}c}
        \includegraphics[width=2.5cm]{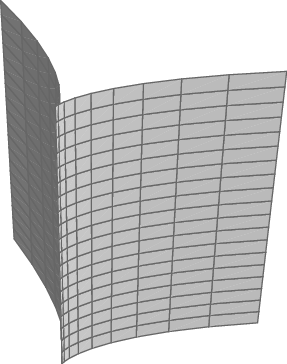} &
        \includegraphics[width=2.3cm]{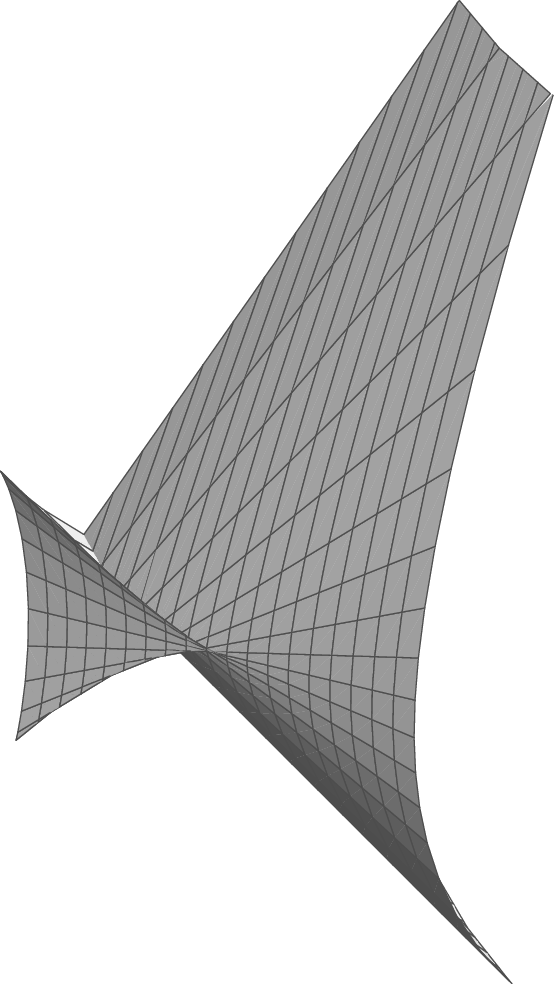} 
    \end{tabular}
  \caption{A cuspidal edge and a swallowtail.}\label{fig:0}
 \end{center}
\end{figure}

We denote by $\R^3$ the Euclidean $3$-space.
Let $M^2$ be a $C^r$-differentiable $2$-manifold and
$f:M^2\to \R^3$ a $C^r$-map.
A point $p\in M^2$ is called a \emph{singular point}
if $f$ is not an immersion at $p$.
A singular point $p\in M^2$ is called a \emph{cuspidal edge}
(resp.\ \emph{swallowtail}) if
there exist a local $C^r$-coordinate system
$(u,v)$ centered at $p$ 
and a local $C^r$-diffeomorphism $\Phi$ on $\R^3$
such that (cf.\ Figure \ref{fig:0})
\begin{align}
 \label{eq:c}
 \Phi\circ f(u,v)&=(u^2,u^3,v)\,(=:f_{\rm \tiny C}),\\
 \label{eq:sw}
 (\mbox{resp.}\,\, \Phi\circ f(u,v)&=(3u^4+u^2v, 4u^3+2uv, v)
 (=:f_{\rm \tiny SW})).
\end{align}
A $C^r$-map $f:M^2\to \R^3$ is called 
a (co-orientable) \emph{frontal} 
if there exists a $C^r$-differentiable unit vector 
field $\nu$ along $f$ such that $\nu(p)\in \R^3$ is
perpendicular to $df(T_pM^2)$ for each $p\in M^2$,
where $T_pM^2$ is the tangent space of $M^2$ at $p$.
Such a $\nu$ is called a {\it unit normal vector
field} along $f$, and can be identified with 
the \emph{Gauss map}
\[
   \nu:M^2\to S^2
\]
by parallel transport in $\R^3$,
where
\begin{equation}\label{eq:s2}
 S^2:=\{(x,y,z)\in \R^3\,;\, x^2+y^2+z^2=1\}.
\end{equation}
(The unit normal vector field $\nu$ can be
 chosen up to $\pm$-ambiguity at each
 local coordinate neighborhood, in general.
 The co-orientability of $f$ is the property that
 its unit normal vector field
 can be extended as a $C^r$-differentiable
 vector field along $f$. In this paper,
 we assume that frontals  are all co-orientable.)
A ($C^r$-differentiable) frontal $f$ is called a \emph{wave front}
if the induced map defined by
\[
  L:=(f,\nu):
     M^2\ni p \mapsto (f(p),\nu(p))\in \R^3\times S^2
\]
is an immersion. 
It is well-known that cuspidal edges and swallowtails are 
typical singularities appearing
on wave fronts. A singular point $p\in M^2$ of 
a $C^r$-map $f:M^2\to \R^3$
is called a \emph{cross cap}
(resp.\ a \emph{cuspidal cross cap})
if there exist a local $C^r$-coordinate system
$(u,v)$ and a local $C^r$-diffeomorphism $\Phi$ on $\R^3$
such that (cf.\ Figure~\ref{fig:1})
\begin{align}\label{eq:w}
 &\Phi\circ f(u,v)=(u,uv,v^2)(=:f_{\rm \tiny CR}), \\
 &(\mbox{resp.}\,\,\, \Phi\circ f(u,v)
 =(u,v^2,uv^3)(=:f_{\rm \tiny CCR})).
 \label{eq:crr}
\end{align}

\begin{figure}[bht]
 \begin{center}
   \begin{tabular}{c@{\hspace{1cm}}c}
        \includegraphics[width=4.2cm]{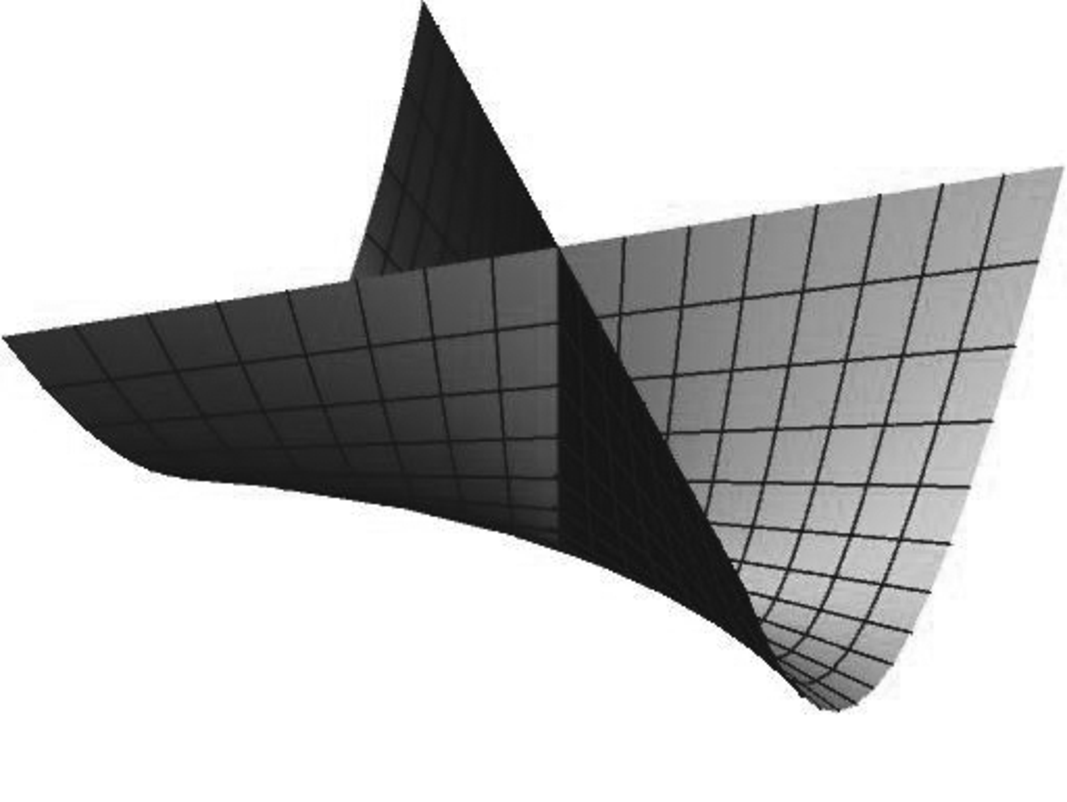} &
        \includegraphics[width=4.2cm]{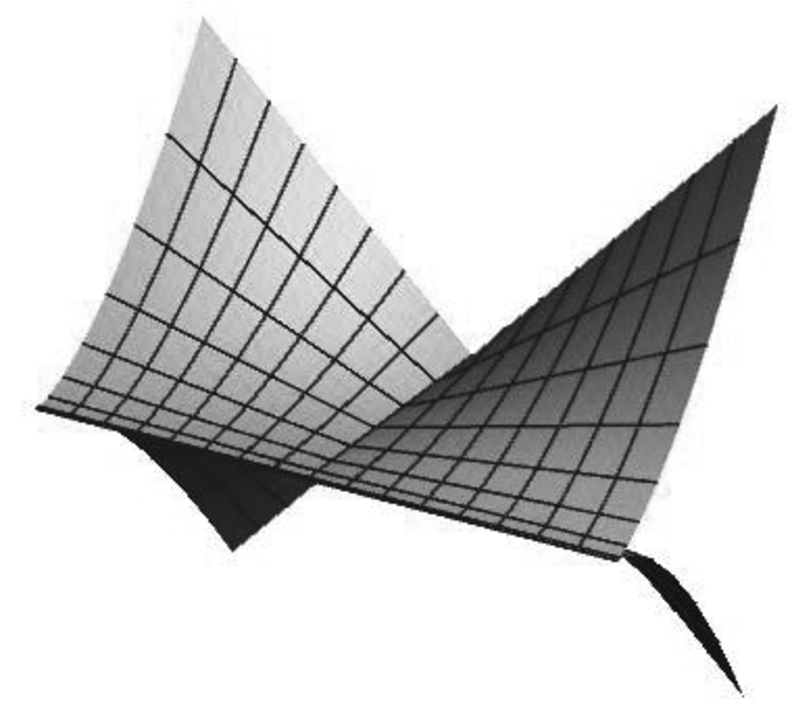}
    \end{tabular}
  \caption{A cross cap and a cuspidal cross cap.}\label{fig:1}
 \end{center}
\end{figure}

Cross caps are not frontals, since their 
unit normal vector fields cannot be extended
continuously across the singular points.
On the other hand, cuspidal cross caps are frontals, but 
not fronts.

\medskip

Let $f:M^2\to \R^3$ be a $C^r$-frontal with 
$C^r$-differentiable unit normal vector field $\nu$. 
If we take a $C^r$-differentiable local coordinate 
system $(U;u,v)$ on $M^2$, then the function
\begin{equation}\label{eq:lambda}
 \lambda:=\det(f_u,f_v,\nu)
  \qquad(f_u:= \partial f / \partial u,~
  f_v:= \partial f / \partial v)
\end{equation}
plays the role of an identifier of the singular points of $f$,
that is, $\lambda(p)=0$ if
and only if $p$ is a singular point.
We call $\lambda$ the \emph{signed area density function}
on $U$. 
A singular point $p\in U$ 
(i.e.\ the point satisfying $\lambda(p)=0$) 
is said to be \emph{non-degenerate} 
if the gradient vector $\nabla \lambda(p):=(\lambda_u(p),\lambda_v(p))$ 
does not vanish. 
If $p$ is a non-degenerate singular point,
then, by the implicit function theorem, there exists
a $C^r$-regular curve $\sigma(t)$ ($|t|<\epsilon$) on $U$ 
parametrizing the singular set of $f$ 
such that $\sigma(0)=p$.
We call the curve $\sigma$ the \emph{characteristic curve}
(or the \emph{singular curve}) passing through $p$.
Cuspidal edges, swallowtails and cuspidal cross caps
are non-degenerate singular points.

\begin{definition}\label{def:admissible}
 Let $p$ be a non-degenerate singular point of 
 a $C^r$-frontal 
 $f\colon{}M^2\to \R^3$.
 A $C^r$-differentiable 
 local coordinate system $(U;u,v)$ centered at $p$ is
 called \emph{adjusted} if $f_v(p)=\vect{0}$.
\end{definition}

We denote by ``$\cdot$'' the canonical inner product on $\R^3$,
and set $|\vect{a}|:=\sqrt{{\vect{a}}\cdot{\vect{a}}}$
($\vect{a}\in \R^3$).
Taking an adjusted coordinate system at a
non-degenerate singular point $p$,
we define
\begin{equation}\label{eq:kn0}
 \kappa_\nu(p):=\frac{{f_{uu}(p)}\cdot{\nu(p)}}{|f_u(p)|^2},
\end{equation}
which is called the \emph{limiting normal curvature}.
The definition of $\kappa_\nu(p)$
does not depend on the choice of an adjusted
coordinate system (cf.\ \cite[(2.2)]{MSUY}).  

Let $\gamma(t)$ be a curve on $M^2$
defined on an interval $I$ such that	
$\hat \gamma:=f\circ \gamma: I\to \R^3$
is a $C^r$-regular curve. 
Then the \emph{normal curvature function} 
along $\hat \gamma$ is defined by
\begin{equation}\label{eq:kn1}
 \kappa_n(t) :=\frac{{\hat \gamma''(t)}\cdot {\hat\nu(t)}}
  {|\hat \gamma'(t)|^2} \qquad (\hat \nu:=\nu \circ \gamma),
\end{equation}
where the prime ${}'$ means $d/dt$.
We let $p$ be a non-degenerate singular point, 
and let $\sigma(t)$ ($|t|<\epsilon$) be
the characteristic curve
passing through $p$ such that $p=\sigma(0)$.
As shown in \cite{MSUY}, the following assertion holds:

\begin{fact}\label{fact:k_n=k_nu}
 If $p$ is a cuspidal edge or a cuspidal cross cap
 {\rm(}resp.~a swallowtail{\rm)} 
 on a $C^r$-differentiable frontal $f$, 
 then $\hat \sigma(t):=f\circ \sigma(t)$ for 
 $t\in I$ $($resp. for $t\in I\setminus \{0\})$  
 is a $C^r$-regular curve, and the value  $\kappa_\nu(p)$ 
 coincides with the normal curvature 
 $\kappa_n(0)$ $($resp. the limit of the normal curvature
 $\dy\lim_{t\to 0}\kappa_n(t))$.
\end{fact}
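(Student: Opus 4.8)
The plan is to reduce the assertion to two pointwise identities in an adjusted coordinate system and then to separate the three singularity types according to the position of the singular direction relative to the null direction. First I would fix an adjusted coordinate system $(U;u,v)$ centered at $p$, so that $f_v(p)=\vect{0}$ while $f_u(p)\neq\vect{0}$; since $df_p$ has rank one, its kernel, the \emph{null direction}, is spanned by $\partial_v$. Abbreviate $E:=f_u\cdot f_u$, $L:=f_{uu}\cdot\nu$, $M:=f_{uv}\cdot\nu$, $N:=f_{vv}\cdot\nu$. As $f$ is a frontal, $f_u\cdot\nu=f_v\cdot\nu=0$ on all of $U$; differentiating $f_v\cdot\nu\equiv0$ in $u$ and in $v$ and evaluating at $p$ (where $f_v(p)=\vect{0}$) gives $M(p)=N(p)=0$. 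Writing the singular curve as $\sigma(t)=(u(t),v(t))$ and using $f_u\cdot\nu=f_v\cdot\nu=0$, I then compute $\hat\sigma''\cdot\hat\nu=(u')^2L+2u'v'M+(v')^2N$ along $\sigma$, so that
\begin{equation}\label{eq:kn-sketch}
\kappa_n=\frac{(u')^2L+2u'v'M+(v')^2N}{|u'f_u+v'f_v|^2}
\end{equation}
wherever $\hat\sigma$ is regular. Formula \eqref{eq:kn-sketch} is the engine of the whole argument.

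For a cuspidal edge or a cuspidal cross cap the singular direction $\sigma'(0)$ is transverse to the null direction $\partial_v$, i.e.\ $u'(0)\neq0$; hence $\hat\sigma'(0)=u'(0)f_u(p)\neq\vect{0}$, so $\hat\sigma$ is $C^r$-regular at $t=0$, and since the same transversality holds at the nearby singular points, $\hat\sigma$ is $C^r$-regular on all of $I$. Evaluating \eqref{eq:kn-sketch} at $t=0$ and inserting $f_v(p)=\vect{0}$ together with $M(p)=N(p)=0$ collapses the right-hand side to $L(p)/E(p)=\kappa_\nu(p)$, which is the desired equality.

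The swallowtail is the delicate case: there the singular direction at $p$ coincides with the null direction, so $u'(0)=0$ and $\hat\sigma'(0)=\vect{0}$, the image curve being singular exactly at $t=0$ and $C^r$-regular for $t\neq0$, where $\sigma(t)$ is a cuspidal edge. Applying the previous case at $\sigma(t)$ gives $\kappa_n(t)=\kappa_\nu(\sigma(t))$ for $t\neq0$, so it remains to show that $t\mapsto\kappa_\nu(\sigma(t))$ is continuous at $0$ with value $\kappa_\nu(p)$. For this I would use that at every non-degenerate singular point the null direction $\eta$ lies in the kernel of both fundamental forms $\mathrm{I}$ and $\mathrm{II}$: the first because $df(\eta)=\vect{0}$, and the second because $M=N=0$ in a coordinate adjusted at that point. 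Hence $\kappa_\nu=\mathrm{II}(\xi,\xi)/\mathrm{I}(\xi,\xi)$ is independent of the transverse direction $\xi$; taking $\xi=\partial_u$, which stays transverse to $\eta$ for $t$ near $0$, yields $\kappa_\nu(\sigma(t))=L(\sigma(t))/E(\sigma(t))$, a ratio of $C^r$ functions with $E(p)\neq0$, so that $\lim_{t\to0}\kappa_n(t)=\lim_{t\to0}\kappa_\nu(\sigma(t))=\kappa_\nu(p)$.

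I expect the main obstacle to be exactly this swallowtail limit. The vanishing $\hat\sigma'(0)=\vect{0}$ turns \eqref{eq:kn-sketch} into an indeterminate form at $t=0$, and the clean resolution is not a Taylor expansion but the recognition that $\kappa_\nu$ is a direction-independent quantity along the singular curve. Justifying this rests on the $C^r$-dependence of the null direction $\eta$ along $\sigma$ and on the degeneracy of both fundamental forms along $\eta$ at singular points, which is where the genuine work lies.
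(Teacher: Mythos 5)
Your proof is correct, but there is nothing in the paper to compare it with: the statement is labeled a Fact and imported from \cite{MSUY} without proof (the paper only re-invokes the relevant ingredient, \cite[(2.2) and (2.3)]{MSUY}, later in the proof of Proposition \ref{prop:43}). Your argument therefore supplies the missing proof, and it is sound. Its three pillars all check out: (i) since $f$ is a frontal, $f_u\cdot\nu=f_v\cdot\nu=0$ holds identically, so along $\sigma(t)=(u(t),v(t))$ the normal curvature is $\kappa_n=\bigl((u')^2L+2u'v'M+(v')^2N\bigr)/\bigl|u'f_u+v'f_v\bigr|^2$, where $L=f_{uu}\cdot\nu$, $M=f_{uv}\cdot\nu$, $N=f_{vv}\cdot\nu$; (ii) at any singular point, in coordinates adjusted there, differentiating $f_v\cdot\nu\equiv0$ gives $M=N=0$ at that point, which collapses the quotient to $L/E=\kappa_\nu$ whenever the singular direction is transverse to the null direction; (iii) consequently the null direction lies in the kernel of both fundamental forms at every non-degenerate singular point, so $\kappa_\nu=\mathrm{II}(\xi,\xi)/\mathrm{I}(\xi,\xi)$ for any $\xi$ transverse to the null direction---this is the same mechanism behind the coordinate-independence of $\kappa_\nu$ that the paper cites as \cite[(2.2)]{MSUY}---and taking the fixed $\xi=\partial_u$ of the coordinates adjusted at the swallowtail exhibits $t\mapsto\kappa_\nu(\sigma(t))=L(\sigma(t))/E(\sigma(t))$ as a $C^r$ function, resolving the $0/0$ indeterminacy at $t=0$. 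The only inputs you take on faith are normal-form facts about the three singularity types: rank one of $df$ at $p$, transversality of $\sigma'$ and the null direction at (and hence, by continuity of the null direction field, near) cuspidal edges and cuspidal cross caps, tangency at a swallowtail with the nearby singular points being cuspidal edges, and the $C^r$-dependence of the null direction along $\sigma$. These are diffeomorphism-invariant properties verifiable on the normal forms \eqref{eq:c}, \eqref{eq:sw}, \eqref{eq:crr}, and the paper itself uses them in the same unproved way; spelling out that invariance is the only step separating your sketch from a fully self-contained write-up.
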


\begin{definition}\label{def:gen}
 A non-degenerate singular point $p$ of a $C^r$-differentiable
 frontal 
 $f$ is said to be \emph{$\nu$-flat} 
 if its limiting normal curvature $\kappa_\nu(p)$ 
 vanishes, and is said to be \emph{non-$\nu$-flat}
 otherwise.
\end{definition}

Kossowski defined a class of positive semi-definite metrics
on $2$-manifolds.
We call metrics belonging to this class
``Kossowski metrics'' (see Definition~\ref{def:K2}).
A point where a Kossowski metric is not positive definite
is called a \emph{singular point} or a \emph{semi-definite point}
of the metric.
A Riemannian metric (i.e.\ a positive definite metric)
is a Kossowski metric without singular points.
(The concept of Kossowski
 metric can be generalized to manifolds of arbitrary dimension,
 see \cite{SUY_msj}.)

In this paper, we consider singular points of metrics
as well as singular points of $C^r$-differentiable maps.
To distinguish between these two kinds of singular points, we
use the terminology ``semi-definite points''
for singular points of a metric.  
On the other hand, a point where the metric is positive definite 
is called a \emph{regular point}.
A Kossowski metric on $M^2$
induces a $C^r$-function called a
\emph{signed area density function} (cf.\ \eqref{eq:k1}),
which is defined on each coordinate neighborhood. 
The following fact explains 
how Kossowski metrics are related to frontals
(see \cite{Kossowski} and also \cite{HHNSUY}):

\begin{fact}\label{fact:p-back}
 The first fundamental form 
 {\rm(}i.e.\ the pull-back of the canonical metric on $\R^3${\rm)}
 of a $C^r$-differentiable frontal which 
 admits only non-degenerate singular points 
 is a $C^r$-differentiable Kossowski metric. 
 Moreover, the signed area density function
 given in \eqref{eq:lambda}
 coincides with that of the  Kossowski metric
 up to $\pm$-multiple ambiguity. 
\end{fact}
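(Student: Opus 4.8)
The plan is to verify the two assertions by working in a local coordinate system $(U;u,v)$ and exploiting the frame $(f_u,f_v,\nu)$ adapted to the frontal. Writing the first fundamental form as $ds^2 = E\,du^2 + 2F\,du\,dv + G\,dv^2$ with
\[
E := f_u\cdot f_u,\qquad F := f_u\cdot f_v,\qquad G := f_v\cdot f_v,
\]
any tangent vector $X = a\,\partial_u + b\,\partial_v$ satisfies $ds^2(X,X) = |a f_u + b f_v|^2 = |df(X)|^2 \ge 0$, so $ds^2$ is positive semi-definite, and it fails to be positive definite at $p$ exactly when $df_p$ has a nontrivial kernel, i.e.\ when $p$ is a singular point of $f$. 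Thus the semi-definite points of $ds^2$ coincide with the singular points of $f$.

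First I would pin down the area densities, which simultaneously yields the second assertion. Since $\nu$ is a unit vector orthogonal to both $f_u$ and $f_v$, we have $f_u\times f_v = \lambda\,\nu$ with $\lambda = \det(f_u,f_v,\nu)$ the signed area density of \eqref{eq:lambda}; taking squared lengths and using the Lagrange identity gives
\[
EG-F^2 = |f_u\times f_v|^2 = \lambda^2 .
\]
Hence $EG-F^2$ is the square of the $C^r$-function $\lambda$, so $\sqrt{EG-F^2}=|\lambda|$ admits a $C^r$ signed square root whose zero set is exactly the singular set. The signed area density of the Kossowski metric $ds^2$ (cf.\ \eqref{eq:k1}) is characterized, within the allowed ambiguity, by this same relation, so it must coincide with $\lambda$ up to the $\pm$-ambiguity asserted; this proves the second assertion.

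It remains to check that $ds^2$ satisfies the defining conditions of a Kossowski metric in Definition \ref{def:K2} --- this is the crux of the argument. I would take an adjusted coordinate system (Definition \ref{def:admissible}) centered at a singular point $p$, so that $f_v(p)=\vect{0}$. Since $\lambda$ vanishes identically to higher order when $df_p$ has rank $0$, the non-degeneracy hypothesis $\nabla\lambda(p)\ne(0,0)$ forces $df_p$ to have rank exactly one, with $f_u(p)\ne\vect{0}$ and kernel spanned by $\partial_v$. The conditions in Definition \ref{def:K2} constrain the coefficients $E,F,G$ and their derivatives along and transverse to the singular curve, and the task is to translate them into the extrinsic frame: using $EG-F^2=\lambda^2$ together with the symmetry $f_{uv}=f_{vu}$ and the Gauss--Weingarten relations expressing $f_{uu},f_{uv},f_{vv}$ in the basis $(f_u,f_v,\nu)$, one computes the relevant derivatives at $p$ and matches them against the required (non-)degeneracy and coherence conditions; here the non-degeneracy of $p$ as a singular point of $f$ supplies precisely the non-degeneracy demanded of the semi-definite point of $ds^2$.

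The main obstacle I anticipate is this last translation step, since the Kossowski conditions are intrinsic statements about the jets of the metric coefficients, whereas the natural data at hand are the extrinsic vectors $f_u,f_v,\nu$ and their derivatives. A clean way to organize the bookkeeping --- and to avoid computing in an arbitrary coordinate system --- is to observe that the rank-$2$ subbundle $\E := \nu^{\perp}$ of $M^2\times\R^3$, equipped with the Euclidean metric, the flat connection of $\R^3$ projected to $\E$, and the bundle map $\phi := df\colon TM^2\to\E$ (which lands in $\E$ because $f_u,f_v\perp\nu$ and is closed because second derivatives commute), is the coherent tangent bundle associated to the frontal, and that its induced metric is exactly $ds^2$ with signed area density $\lambda$. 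One then invokes the correspondence between coherent tangent bundles with non-degenerate singular points and Kossowski metrics to conclude, reducing the verification to the already-established identity $EG-F^2=\lambda^2$ and the non-degeneracy of $\lambda$.
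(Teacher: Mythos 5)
Your overall strategy is the right one, but note first that the paper itself contains no proof of this Fact: it is quoted from \cite{Kossowski} and \cite{HHNSUY}, so the comparison has to be made against the ingredients the paper does provide, namely Proposition \ref{prop:property} (the coordinate characterization of admissibility) and Proposition \ref{prop:lambda} (uniqueness of the signed area density up to sign). Measured against these, your first two steps are correct and complete: frontality gives $f_u\times f_v=\lambda\,\nu$ with $\lambda=\det(f_u,f_v,\nu)$ of class $C^r$, hence $EG-F^2=|f_u\times f_v|^2=\lambda^2$, which is \eqref{eq:k1}; the non-degeneracy hypothesis on the singular points of $f$ is verbatim \eqref{eq:k2}; and the second assertion of the Fact is then exactly the uniqueness statement of Proposition \ref{prop:lambda}. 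Your remark that non-degeneracy forces $df_p$ to have rank one is also correct (if $df_p=0$, every term of $\lambda_u(p)$ and $\lambda_v(p)$ contains a factor $f_u(p)$ or $f_v(p)$, so $\nabla\lambda(p)=(0,0)$).

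The genuine gap is that admissibility (Definition \ref{def:adms}) --- the one condition in Definition \ref{def:K2} beyond \eqref{eq:k1}--\eqref{eq:k2}, and the actual crux --- is never verified: both of your proposed ways of handling it are deferred rather than executed. Moreover, the direct computation as you envision it is ill-posed, since at the singular point $(f_u,f_v,\nu)$ is \emph{not} a basis ($f_v(p)=\vect{0}$), so one cannot expand $f_{uu},f_{uv},f_{vv}$ in that frame there. Fortunately no frame and no Gauss--Weingarten relations are needed; the check is two lines. In adjusted coordinates, $f_v(p)=\vect{0}$ gives $F(p)=G(p)=0$, $G_u(p)=2f_{uv}(p)\cdot f_v(p)=0$, $G_v(p)=2f_{vv}(p)\cdot f_v(p)=0$, and $E_v(p)=2f_{uv}(p)\cdot f_u(p)=2F_u(p)$, because $F_u=f_{uu}\cdot f_v+f_u\cdot f_{uv}$ and $f_v(p)=\vect{0}$; by Proposition \ref{prop:property} this is precisely admissibility at the rank-one semi-definite point $p$. (Observe that this uses only the symmetry of second derivatives, so admissibility holds for the first fundamental form of \emph{any} $C^r$-map at such a point; frontality enters only in producing the $C^r$ square root $\lambda$.) Your fallback via the coherent tangent bundle $\E_f=\nu^{\perp}$ does not close the gap either: the paper's Fact \ref{fact:jitugen} runs in the opposite direction (Kossowski metric $\Rightarrow$ coherent tangent bundle), and the converse you invoke --- that the pull-back metric of a coherent tangent bundle with non-degenerate singular points is Kossowski --- is exactly the statement whose proof contains this same admissibility check (via the identity $\Theta(X,Y,Z)=\inner{\nabla_X\phi(Y)}{\phi(Z)}$, which vanishes whenever $\phi(Z)(p)=0$). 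Invoking it here does not avoid the missing step; it only relocates it.
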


Since this fact plays an important role,
we shall prove this fact in Section~\ref{sec1}.
For each semi-definite point $p$ of 
a Kossowski metric, an invariant (cf.\ \eqref{eq:euler0})
\[
   \Omega(p)\in T_p^* M^2\wedge T_p^* M^2
\]
is defined. 
If $\Omega(p)=0$,
we call $p$ a \emph{parabolic point} of $ds^2$
(cf.\ Definition \ref{def:generic}).
The following fact explains the relationship
between singular points on wave fronts
and semi-definite points on Kossowski metrics.

\begin{fact}[\cite{MSUY}]\label{fact:front}
 Let $p$ be a non-degenerate singular point of
 a $C^r$-differentiable frontal $f:M^2\to \R^3$.
 Then the following three assertions are equivalent:
 \begin{enumerate}
  \item $p$ is a non-parabolic semi-definite point of
	the induced Kossowski metric, 
  \item $f$ is a wave front at $p$, and 
	$p$ is a non-$\nu$-flat singular point of $f$,
  \item $p$ is a regular point of the Gauss map of $f$.
 \end{enumerate}
\end{fact}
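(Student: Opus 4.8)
The plan is to prove the chain \textup{(2)}$\Leftrightarrow$\textup{(3)} by a direct computation in an adjusted coordinate system, and then to deduce \textup{(1)}$\Leftrightarrow$\textup{(3)} by relating the intrinsic invariant $\Omega(p)$ to the rank of the Gauss map. First I would fix an adjusted coordinate system $(u,v)$ centered at $p$, so that $f_v(p)=\vect{0}$. Since $p$ is non-degenerate, differentiating $\lambda=\det(f_u,f_v,\nu)$ shows $f_u(p)\neq\vect{0}$: otherwise every term of both $\lambda_u(p)$ and $\lambda_v(p)$ would contain a vanishing column and $\nabla\lambda(p)$ would vanish. Hence $df_p$ has rank exactly one. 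Differentiating $\nu\cdot f_u=0$, $\nu\cdot f_v=0$ and $\nu\cdot\nu=1$ and evaluating at $p$ (using $f_v(p)=\vect{0}$) yields $\nu(p)\cdot f_{uv}(p)=0$, hence $\nu_v(p)\cdot f_u(p)=-\nu(p)\cdot f_{uv}(p)=0$, together with $\nu_u(p)\cdot\nu(p)=\nu_v(p)\cdot\nu(p)=0$. I then introduce the positively oriented orthonormal frame $\{e_1,e_2,\nu(p)\}$ with $e_1:=f_u(p)/|f_u(p)|$.

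In this frame both $\nu_u(p)$ and $\nu_v(p)$ lie in $\mathrm{span}(e_1,e_2)$, and the relation $\nu_v(p)\cdot f_u(p)=0$ forces $\nu_v(p)=\beta\,e_2$ for a scalar $\beta$. Now $L=(f,\nu)$ is an immersion at $p$ precisely when $dL_p(\partial_u)=(f_u(p),\nu_u(p))$ and $dL_p(\partial_v)=(\vect{0},\nu_v(p))$ are independent; since $f_u(p)\neq\vect{0}$, this holds if and only if $\nu_v(p)\neq\vect{0}$, i.e.\ $\beta\neq0$. Writing $\nu_u(p)=a\,e_1+b\,e_2$, the identity $\nu_u(p)\cdot f_u(p)=-\nu(p)\cdot f_{uu}(p)$ combined with \eqref{eq:kn0} gives $a=-\kappa_\nu(p)\,|f_u(p)|$, so $p$ is non-$\nu$-flat if and only if $a\neq0$. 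Since $\det(\nu_u(p),\nu_v(p),\nu(p))=a\beta$, the Gauss map is regular at $p$ (equivalently $\nu_u(p),\nu_v(p)$ are linearly independent) if and only if $a\beta\neq0$, that is, exactly when $f$ is a wave front at $p$ and $p$ is non-$\nu$-flat. This establishes \textup{(2)}$\Leftrightarrow$\textup{(3)}.

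For \textup{(1)}$\Leftrightarrow$\textup{(3)} I would express the intrinsic invariant $\Omega(p)$ (cf.\ \eqref{eq:euler0}) of the induced Kossowski metric in the adjusted coordinates and compare it with the extrinsic quantity $a\beta$. By Fact~\ref{fact:p-back} the signed area density of the metric agrees with $\lambda$ up to sign, and I expect $\Omega(p)$ to be a nonzero multiple of $\det(\nu_u(p),\nu_v(p),\nu(p))\,du\wedge dv=a\beta\,du\wedge dv$, which is, up to scale, the pullback $\nu^{*}(dA_{S^2})$ of the area form of $S^2$. Granting this identification, $\Omega(p)\neq0$ (the non-parabolic condition) is equivalent to $a\beta\neq0$, hence to the regularity of the Gauss map, which closes the loop. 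The main obstacle is precisely this last identification: one must unwind the intrinsic definition of $\Omega$ for a Kossowski metric — built solely from the metric coefficients $E,F,G$ and their derivatives, with no a~priori reference to $\nu$ — and verify that it reproduces the extrinsic determinant $\det(\nu_u,\nu_v,\nu)$ at $p$. This amounts to expressing the derivatives of $\nu$ through the second fundamental form, equivalently the shape operator of the coherent tangent bundle induced by the metric, and checking the bookkeeping carefully at the semi-definite point, where the usual Riemannian normalizations degenerate.
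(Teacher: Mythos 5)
Your first half is correct and complete: the adjusted-coordinate computation showing that $L=(f,\nu)$ is an immersion at $p$ exactly when $\beta\neq 0$, that $p$ is non-$\nu$-flat exactly when $a\neq 0$ (via \eqref{eq:kn0}), and that the Gauss map is regular at $p$ exactly when $a\beta\neq 0$, is sound, and it establishes (2)$\Leftrightarrow$(3) in a genuinely more self-contained way than the paper does. The paper instead obtains (2)$\Leftrightarrow$(3) by citing \cite[Corollary C]{MSUY}, proves (1)$\Rightarrow$(2) through Lemma \ref{lem:0423} using the identities $\kappa_\Pi=\kappa_\nu\kappa_c$ and $\mu_\Pi=\kappa_\nu\mu_c$ from \cite{MSUY}, and gets (3)$\Rightarrow$(1) from \cite[Theorem A]{MSUY}.

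The second half, however, has a genuine gap, which you flag yourself: the identification $\Omega=\pm\det(\nu_u,\nu_v,\nu)\,du\wedge dv$, hence $\Omega(p)=\pm a\beta\,du\wedge dv$, is only ``expected'' and then ``granted''. This is not a deferrable bookkeeping detail; it is precisely the content that the paper outsources to \cite[Theorem A]{MSUY}, so without it your argument proves (2)$\Leftrightarrow$(3) and says nothing about (1). The identification is true, but the efficient proof is not the route you sketch (unwinding the intrinsic definition of $\Omega$ at the semi-definite point via the coherent tangent bundle, where, as you note, the normalizations degenerate); instead one should avoid the point $p$ entirely. Since $p$ is a non-degenerate singular point, the singular set of $f$ near $p$ is a regular curve, so the regular points of $f$ are dense near $p$. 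At each regular point the Weingarten relation gives $\nu_u\times\nu_v=(\det S)\,f_u\times f_v$, where $S$ is the shape operator, hence $\det(\nu_u,\nu_v,\nu)=(\det S)\,\lambda$ with $\lambda=\det(f_u,f_v,\nu)$ as in \eqref{eq:lambda}; by Fact \ref{fact:p-back} this $\lambda$ is, up to sign, the signed area density of $ds^2$, and by the Theorema Egregium $\det S$ coincides with the intrinsic Gaussian curvature $K$ of $ds^2$. Therefore $\Omega=K\,d\hat A=\pm\det(\nu_u,\nu_v,\nu)\,du\wedge dv$ holds on the regular set, and since both sides are continuous $2$-forms near $p$ (the left side by the defining $C^r$-extension in \eqref{eq:euler0}, the right side because $f$ and $\nu$ are $C^r$), the identity extends to $p$ by density and continuity, yielding $\Omega(p)=\pm a\beta\,du\wedge dv$. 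With this argument supplied, your proof closes and is a legitimate, more elementary alternative to the paper's citation-based one; as written, the equivalence with (1) remains unproven.
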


Kossowski proved the following:

\begin{fact}[Kossowski's realization theorem \cite{Kossowski}]
\label{fact:K}
 Let $ds^2$ be a real analytic {\rm(}i.e.\ $C^\omega$-differentiable{\rm)} 
 Kossowski metric
 on a real analytic $2$-manifold $M^2$, and let
 $p\in M^2$ be a  non-parabolic
 semi-definite point of $ds^2$.
 Then there exist a neighborhood
 $U$ of $p$ and a real analytic wave front
 $f:U\to \R^3$ such that the first fundamental form 
 of $f$ coincides with $ds^2$ on $U$.
\end{fact}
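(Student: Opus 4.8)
The plan is to reconstruct the front from fundamental-form data by solving the Gauss--Codazzi (Bonnet) system, treating the singular curve as the initial curve of a Cauchy problem that the non-parabolic hypothesis renders non-characteristic, and invoking the Cauchy--Kovalevskaya theorem to produce a real-analytic solution (this is the reason $C^\omega$-regularity is essential). First I would fix adjusted real-analytic coordinates $(u,v)$ centered at $p$ (Definition~\ref{def:admissible}) in which the singular set of $ds^2$ is the curve $\{v=0\}$ and $f_v(p)=\vect{0}$, and write $ds^2 = E\,du^2 + 2F\,du\,dv + G\,dv^2$. Non-degeneracy of the semi-definite point means the signed area density $\sqrt{EG-F^2}$ vanishes to first order along $\{v=0\}$, with the null direction of $ds^2$ spanned by $\partial_v$ there.

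Rather than reconstructing $f$ from the degenerate metric $ds^2$ directly---where the classical Bonnet theorem fails because $EG-F^2=0$ on the singular curve---I would work with the coherent tangent bundle $(E,\langle\,,\rangle,D,\phi)$ canonically attached to the Kossowski metric, as in the authors' earlier work. Here $E$ is a rank-two bundle carrying a genuine (non-degenerate) fibre metric and a metric connection $D$, and $\phi\colon TM^2\to E$ is a bundle homomorphism with $ds^2 = \phi^*\langle\,,\rangle$; the degeneracy of $ds^2$ is carried entirely by $\phi$, not by the metric on $E$, so the reconstruction can be performed at the level of the non-degenerate bundle $E$.

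The core step is to produce a second fundamental form $II = L\,du^2+2M\,du\,dv+N\,dv^2$, with real-analytic coefficients, together with the unit normal $\nu$, compatible with $(E,D,\phi)$ through the Gauss and Codazzi equations. The Gauss equation takes the form $LN-M^2 = K(EG-F^2)$, whose right-hand side degenerates along $\{v=0\}$; the Kossowski condition on $ds^2$ is precisely what guarantees that $K(EG-F^2)$ extends real-analytically across the singular curve, making the equation consistent there. I would prescribe Cauchy data for $(L,M,N)$ along $\{v=0\}$ so that the Gauss equation holds on the curve and so that the limiting normal curvature $\kappa_\nu(p)$ in \eqref{eq:kn0} is nonzero, i.e. $p$ is non-$\nu$-flat (Definition~\ref{def:gen}). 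Using the Gauss equation as a constraint and the Codazzi equations to express the $v$-derivatives of the unknowns, the system can be arranged in Cauchy--Kovalevskaya form with $\{v=0\}$ as initial curve; the non-parabolic hypothesis $\Omega(p)\neq0$ is exactly what makes this curve non-characteristic and the choice of initial data admissible, so that a unique real-analytic solution exists near $p$. A standard propagation argument then shows that, once Codazzi holds on the whole neighborhood and Gauss holds on $\{v=0\}$, the Gauss equation holds throughout.

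Finally, the fundamental theorem for coherent tangent bundles---integration of the associated first-order linear system---produces a real-analytic map $f\colon U\to\R^3$ with unit normal $\nu$ realizing $(E,D,\phi,II)$, unique up to rigid motions of $\R^3$, whose first fundamental form equals $ds^2$. Since $\kappa_\nu(p)\neq0$ and $p$ is non-parabolic, Fact~\ref{fact:front} ensures that $f$ is a wave front at $p$, completing the construction. I expect the main obstacle to lie in the third step: controlling the degeneration of the Gauss equation along the singular curve and verifying that non-parabolicity genuinely delivers a non-characteristic, solvable Cauchy problem there. This is where the hypothesis $\Omega(p)\neq0$ must do its essential work, and it is also the reason the real-analytic category (hence Cauchy--Kovalevskaya) cannot be dispensed with.
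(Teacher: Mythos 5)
Your skeleton---coherent tangent bundle, Gauss--Codazzi data, Cauchy--Kovalevskaya with data on an initial curve, then Fact~\ref{fact:front} to get a wave front---is indeed the paper's skeleton, but two of your key steps fail as stated. First, your coordinate normalization (singular set equal to $\{v=0\}$ \emph{and} null direction spanned by $\partial_v$ along it) forces every point of the singular curve to be of type $A_2$. A non-parabolic semi-definite point can perfectly well be of type $A_3$ (this is the swallowtail case of Theorem~\ref{thm:main0}): there the null direction at $p$ is \emph{tangent} to the singular curve, so your coordinates do not exist, and the singular curve cannot serve as the initial curve of the Cauchy problem because its tangent at $p$ is a null vector. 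The paper avoids this by placing the Cauchy data not on the singular curve but on an arbitrary regular curve $\gamma$ with non-null tangent at $p$, brought into {\rm K}-orthogonal form by Proposition~\ref{prop:K-coord}; at an $A_3$ point such a $\gamma$ is transverse to the singular set (see Figure~\ref{fig:a2a3}, right).

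Second, you have wired the hypothesis $\Omega(p)\neq 0$ into the wrong place. Non-parabolicity does not make the initial curve non-characteristic, and it is not what makes the Cauchy problem solvable. In the paper's reduction, the algebraic constraints \eqref{eq:symm} and \eqref{eq:egregium} are solved for $B$ and $D$, which requires dividing by $A$; what legitimizes this division is the \emph{free choice} of initial datum $A(u,0)=a(u)$ with $a(0)\neq 0$, i.e.\ prescribing a nonvanishing limiting normal curvature (Proposition~\ref{prop:43}), after which \eqref{eq:standard_form} is automatically in Cauchy--Kovalevskaya normal form whether or not $\Omega(p)$ vanishes. Indeed, Theorem~\ref{thm:main0} realizes the metric as a frontal at \emph{parabolic} points as well (that is exactly how cuspidal cross caps arise); the hypothesis $\Omega(p)\neq 0$ is used only \emph{after} the construction, via Lemma~\ref{lem:0423} and Fact~\ref{fact:front}, to upgrade the frontal with $\kappa_\nu(p)\neq 0$ to a wave front. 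Relatedly, your plan to run Codazzi in the classical variables $(L,M,N)$ conceals the real analytic difficulty: the Christoffel symbols of a degenerate metric blow up along the semi-definite set (in {\rm K}-orthogonal coordinates, $\Gamma^2_{11}=-E E_v/(2\lambda^2)$, and admissibility makes $E_v/\lambda$, not $E_v/\lambda^2$, regular), so the classical Codazzi system does not have analytic coefficients near $\{v=0\}$ and Cauchy--Kovalevskaya cannot be applied to it. The whole point of Section~\ref{sec3} is that in the orthonormal frame of the coherent tangent bundle the connection coefficients \eqref{eq:Christoffel} remain analytic, so the unknowns must be the frame components $(A,B,C,D)$ of $d\nu$ rather than $(L,M,N)$; naming the coherent tangent bundle while writing the classical equations leaves this essential step undone.
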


In a joint work with Saji \cite{SUY_annals}, 
the third and the fourth authors
introduced the notion of
``coherent tangent bundle''
and proved Gauss-Bonnet type formulas for it.
A realization of the $C^r$-differentiable
vector bundle as a limiting tangent bundle of
a $C^r$-differentiable frontal is given in \cite{SUY_kodai}.
The purpose of this paper is to explain 
Kossowski's realization theorem (Fact \ref{fact:K})
from the viewpoint of the theory of coherent 
tangent bundles, and to prove several refinements. 
In fact, we define $A_2$  points and $A_3$  points 
as semi-definite points of a Kossowski metric $ds^2$
(see Definition \ref{def:a2a3add}).
The following fact is important:

\begin{fact}[{\cite[Proposition 2.19]{HHNSUY}}]
\label{fact:a2a3}
 Let $f:M^2\to \R^3$ be a $C^r$-differentiable
 wave front, and let
 $p\in M^2$ be a non-degenerate singular point.
 Then $p$ is a cuspidal edge $($resp. a swallowtail$)$
 if and only if it is an $A_2$ semi-definite point
 {\rm(}resp.\ an $A_3$ semi-definite point{\rm)} of $ds^2$.
\end{fact}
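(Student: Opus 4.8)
The plan is to reduce the statement to a comparison between the \emph{extrinsic} recognition criteria for cuspidal edges and swallowtails on wave fronts and the \emph{intrinsic} conditions defining $A_2$ and $A_3$ semi-definite points in Definition~\ref{def:a2a3add}. Recall that for a wave front $f$ with unit normal $\nu$, signed area density $\lambda=\det(f_u,f_v,\nu)$, and a null vector field $\eta$ (a vector field defined near the characteristic curve $\sigma$ with $\eta$ spanning $\ker df$ at each singular point), the well-known criteria (cf.~\cite{MSUY}) assert that a non-degenerate singular point $p$ is a cuspidal edge if and only if $\eta\lambda(p)\neq0$, and is a swallowtail if and only if $\eta\lambda(p)=0$ and $\eta\eta\lambda(p)\neq0$ (equivalently, $d\lambda_p$ and $d(\eta\lambda)_p$ are linearly independent). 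I would first record that $\eta\lambda(p)\neq0$ says exactly that the null direction is transverse to $\sigma$ at $p$, while $\eta\lambda(p)=0$ says that $\eta(p)$ is tangent to $\sigma$.

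Next I would identify the intrinsic and extrinsic data. By Fact~\ref{fact:p-back}, the first fundamental form $ds^2$ of $f$ is a Kossowski metric whose intrinsic signed area density agrees with $\lambda$ up to a nonvanishing multiple. Moreover, at a singular point the kernel of $ds^2$ equals $\ker df_p$: indeed $ds^2(X,Y)=df(X)\cdot df(Y)$, so on the rank-one locus the relation $df(X)\cdot df(X)=0$ forces $df(X)=0$. Hence the intrinsic null direction carried by the Kossowski metric (equivalently, by its associated coherent tangent bundle, whose bundle homomorphism $\phi$ restricts to $df$) coincides along the singular curve with the extrinsic null direction $\ker df$. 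Consequently the quantities $\eta\lambda$ and $\eta\eta\lambda$ entering the intrinsic $A_2$/$A_3$ conditions of Definition~\ref{def:a2a3add} are, up to the $\pm$-multiple of Fact~\ref{fact:p-back}, the same as those entering the extrinsic criteria above.

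It then remains to check that these conditions are \emph{well defined}, i.e. independent of the two ambiguities involved: the nonvanishing multiple relating the two area densities, and the choice of smooth extension of the null direction off the singular curve. The first-order condition $\eta\lambda(p)\neq0$ is plainly invariant, since rescaling $\lambda$ or $\eta$ by nonzero factors does not change whether $\eta\lambda(p)$ vanishes. For the second-order condition I would use that at an $A_3$ (swallowtail) point the null direction $\eta(p)$ is tangent to the singular curve: writing a competing extension as $\eta'=c\,\eta+\mu W$ with $\mu$ vanishing on the singular set, the correction to $\eta\eta\lambda(p)$ is a multiple of $(\eta\mu)(p)$, which is proportional to $\tfrac{d}{dt}\mu(\sigma(t))\big|_{0}=0$ precisely because $\eta(p)$ is tangent to $\sigma$. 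Thus $\eta\eta\lambda(p)$ is intrinsic at such points, and the intrinsic $A_2$/$A_3$ conditions match the extrinsic cuspidal-edge/swallowtail criteria, yielding both equivalences.

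The main obstacle I anticipate is exactly this identification of the \emph{intrinsically constructed} null vector field of the Kossowski metric with $\ker df$, together with the verification that the second-order invariant used to define $A_3$ points agrees with the extrinsic swallowtail quantity. Pointwise agreement on the singular set is immediate from $\ker ds^2=\ker df$, but the $A_3$ condition is a genuinely second-order statement, so one must control the relevant one-jet of $\eta$ and $\lambda$ transverse to $\sigma$; the tangency of the null direction to the singular curve at $A_3$ points is what rescues well-definedness, and making this compatible with the coherent-tangent-bundle description of $\phi=df$ is the technical heart of the argument.
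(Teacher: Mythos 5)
This statement is one the paper itself does not prove: it is imported as a Fact from \cite[Proposition 2.19]{HHNSUY}, so there is no internal proof to compare against; the closest internal analogue is Proposition~\ref{prop:Honda}, which handles the frontal (non-front) case by a similar reduction to criteria from \cite{MSUY}. Your argument is correct, and it is essentially the standard argument behind the cited result. The two pillars are exactly right: (i) the recognition criteria for wave fronts (due to Kokubu--Rossman--Saji--Umehara--Yamada \cite{KRSUY}, recalled in \cite{MSUY}), which characterize cuspidal edges by $\eta\lambda(p)\neq 0$ and swallowtails by $\eta\lambda(p)=0$, $\eta\eta\lambda(p)\neq 0$, with $\eta$ spanning $\ker df$ along the singular curve; and (ii) the identification $ds^2(v,v)=|df(v)|^2$, which shows that the radical $\mathcal N_p$ of the Kossowski metric equals $\ker df_p$ at every singular point, hence the characteristic curve, the null field, and (by Fact~\ref{fact:p-back}, up to sign rather than an arbitrary nonvanishing factor, though this makes no difference to your computation) the signed area density are the same intrinsic and extrinsic data, so the conditions of Definition~\ref{def:a2a3add}, in the equivalent form of Remark~\ref{rem:eta}, coincide with the criteria in (i). Your well-definedness verification is also sound, and its key point is the correct one: for a competing extension $\eta'=c\,\eta+\mu W$ with $\mu$ vanishing on the singular set, the only potentially harmful correction to $\eta\eta\lambda(p)$ is a multiple of $d\mu(\eta(p))$, which vanishes at an $A_3$ candidate precisely because there $\eta(p)$ is tangent to the singular curve along which $\mu\equiv 0$.
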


Cross caps 
are generic singular points appearing on
$C^\infty$-differentiable maps of $2$-manifolds 
into $\R^3$. 
However, they never appear on frontals 
({\cite[Proposition~4.3]{HHNSUY}}).
The corresponding assertion for cross cap singular points
is an open problem (see Question~\ref{q:3} in Section~\ref{sec6}).

If $p$ is an $A_2$ semi-definite point,
then the secondary invariant
\[
    \Omega'(p)\in T_p^* M^2\wedge T_p^* M^2
\]
is also defined (cf.\ \eqref{eq:DO}).
The following assertion holds:
\begin{introtheorem}\label{thm:main0}
 Let $M^2$ be a real analytic  $2$-manifold
 and $ds^2$ a real analytic Kossowski metric on it.
 Suppose that $p\in M^2$ is a semi-definite 
 point of the metric $ds^2$. Then there exists a 
 real analytic frontal
 $f:U\to \R^3$ defined on a neighborhood $U$ of $p$
 such that $ds^2$ is the first fundamental 
 form of $f$, and the limiting normal curvature 
 of $f$ at $p$ does not vanish.
 Moreover, such a realization $f$ satisfies  
 the following properties{\rm:}
\begin{enumerate}
 \item\label{item:main0:1}
       $f$ is a  wave front at $p$
       if and only if  $p$ is a non-parabolic
       point {\rm(}of $ds^2${\rm)},
 \item $f$ has a cuspidal edge at $p$ if and only if
       $p$ is a non-parabolic $A_2$ semi-definite point,
 \item $f$ has a swallowtail at $p$ if and only if
       $p$ is a non-parabolic $A_3$ semi-definite point,
 \item $f$ has a cuspidal cross cap at $p$ if and only if
       $p$ is a parabolic $A_2$ semi-definite point satisfying 
       $\Omega'(p)\ne 0$.
\end{enumerate}
\end{introtheorem}
Fact \ref{fact:K} corresponds to the assertion \ref{item:main0:1}.
In particular, 
Theorem \ref{thm:main0} is a generalization and refinement
of Fact \ref{fact:K}.
We prove this in Section \ref{sec4}.

\begin{remark}
 Under the assumptions of Theorem \ref{thm:main0},
 it is shown in \cite{HS} that $f$ has
 a $5/2$-cuspidal edge at $p$ if 
 the Gaussian curvature function $K$ of $ds^2$
 can be extended as a smooth function defined on
 a sufficiently small neighborhood of $p$
 and $dK(\eta)$ does not vanish at $p$,
 where $\eta\in T_pM^2$ is a null direction
 at the semi-definite point $p$.
\end{remark}

\begin{definition}\label{def:isom}
 Let $f_i$ ($i=1,2$) be two germs of $C^r$-frontals.
 Then we say these two map germs are 
 \emph{congruent} (resp.\ \emph{isometric})
 if there exist an isometry germ $\Phi$ on $\R^3$
 and a 
 $C^r$-diffeomorphism germ
 $\phi$
 (resp.\ a $C^r$-diffeomorphism germ $\phi$)  such that  
 $\Phi\circ f_2\circ \phi=f_1$
 (resp.\ $\phi^*ds^2_2=ds^2_1$),
 where $ds^2_i$ ($i=1,2$) is the 
 first fundamental form of $f_i$.
 On the other hand,
 two map germs are 
 \emph{strongly congruent} 
 if there exists an isometry germ $\Phi$ on $\R^3$
 such that $\Phi\circ f_2=f_1$.
\end{definition}

The strong congruence implies the congruence.
In this paper, we mainly discuss
the number of strong congruence classes
of wave fronts with the same first fundamental forms.
The following theorem gives properties of
the set of germs of real analytic frontals 
whose first fundamental forms coincide with 
a real analytic Kossowski metric germ $ds^2$ 
at $p\in M^2$.

\begin{introtheorem}\label{thm:main}
 Let $M^2$ be a real analytic $2$-manifold
 and $ds^2$ a real analytic Kossowski metric on $M^2$.
 Let $\omega(t)$ and $\mu(t)$ 
 be two germs of real analytic functions of one variable 
 at $t=0$.
 For each $p\in M^2$, 
 take a $C^\omega$-regular curve 
 $\gamma(t)$  
 in $M^2$
 such that $\gamma(0)=p$ and $\gamma'(0)$
 is not a null vector
 {\rm(}i.e.\
 $ds^2(\gamma'(0),\gamma'(0))> 0$, see Definition \ref{def:a2a3add}{\rm)}. 
 Then there exists a real analytic frontal germ $f=f_{\omega,\mu}$ 
 satisfying the following properties{\rm:}
 \begin{enumerate}
  \item $ds^2$ is the first fundamental form of $f$,
  \item the normal curvature function germ along 
	$\gamma$ defined by
	\eqref{eq:kn1} coincides with $e^{\omega(t)}$ 
	for a suitable
	choice of unit normal vector field $\nu$, 
  \item $\mu(t)$ gives the torsion function 
	germ along $\hat \gamma(t)=f\circ \gamma(t)$,
  \item if $p$ is a regular point $($resp.~a
	non-parabolic semi-definite point$)$ of $ds^2$,
	then $f$ is an immersion $($resp. a wave front
	with non-vanishing limiting normal curvature$)$.
 \end{enumerate}
 The possibilities for the strong congruence classes of such an 
 $f$ are at most two.
 In particular, if $\mu$ vanishes identically
 $($i.e. $\hat \gamma$ is a planar curve$)$,
 then the strong congruence class of 
 $f$ is uniquely determined.
\end{introtheorem}

\begin{remark}
 When $\gamma(t)$ is a characteristic curve of $ds^2$
 consisting of semi-definite points of type $A_2$,
 the assertion of Theorem B is proved in \cite{NUY}.
 So Theorem B can be considered as its generalization.
\end{remark}

\begin{remark}
 In the last statement of Theorem \ref{thm:main},
 we wrote that ``the possibilities for the strong 
 congruence classes of $f$ are at most two''.
 However, if we consider the
 possibilities for the congruence classes instead,
 the number turns to be ``four'' since
 we have the freedom to reverse the orientation
 of the singular curve, see \cite{Brazil}
 for details.
\end{remark}

As a consequence, the following assertion holds:

\begin{introcorollary}\label{cor:main}
 Let $I$ be an interval, and
 let $\{\omega_s(t)\}_{s\in I}$ 
 and $\{\mu_s(t)\}_{s\in I}$
 be two families of real analytic function germs 
 of the variable $t$  depending real analytically 
 on the parameter $s$.
 Then there exists a family
 $f_s:=f_{\omega_s,\mu_s}$ $(s\in I)$ 
 of real analytic frontal germs
 satisfying the properties $(1)$--$(4)$
 in Theorem B for 
 each $s\in I$ and
 depending on the parameter $s$
 real analytically.
\end{introcorollary}

In Section \ref{sec4}, we prove 
Theorems \ref{thm:main0} and  \ref{thm:main} and Corollary \ref{cor:main},
and 
also give a variant (cf.\ Theorem \ref{thml:main2})
of Theorem \ref{thm:main}.
When $p$ is an $A_2$ semi-definite point,
we can choose $\gamma$ to be a characteristic  curve,
since $\gamma'(0)$ is not a null vector.
Then we obtain the following assertion:

\begin{introcorollary}\label{cor:germ-A2}
 Let $f:(U,p)\to \R^3$ be a real analytic germ of 
 cuspidal edge {\rm(}resp.\ cuspidal cross cap{\rm)},
 and let $\sigma(t)$ be a real analytic germ of
 regular curve in $U$
 parametrizing the singular set 
 by the arc-length parameter such that $\sigma(0)=p$.
 Suppose that the limiting normal curvature at $p$
 does not vanish.
 We let $\Gamma(t)$ be a 
 real analytic
 germ of regular 
 space curve parametrized by the arc-length 
 such that the curvature function $\kappa(t)$ of $\Gamma(t)$
 is the same as that of $f\circ \sigma(t)$
 $(\Gamma(t)$ may not have the same
 torsion function as $\hat \sigma(t):=f\circ \sigma(t))$.
 Then, for each choice of $\Gamma$, there exist a 
 neighborhood $V(\subset U)$ of $p$ and a front
 $($resp. a frontal$)$ $g:(V,p)\to \R^3$ having a 
 cuspidal edge 
 {\rm(}resp.\ cuspidal cross cap{\rm)} at $p$ such that
 $g$ is isometric to $f$ and
 $\Gamma(t)=g\circ \sigma(t)$. 
 Moreover,   
 the possibilities for the strong congruence classes 
 of such a $g$ are at most two.
\end{introcorollary}

We prove Corollary \ref{cor:germ-A2} also in Section \ref{sec4}.
Here, we remark that, in \cite{HS},
analogues of Theorems \ref{thm:main0} and \ref{thm:main}
and Corollary \ref{cor:germ-A2} 
are obtained for $5/2$-cuspidal edges.
As a consequence of Theorem \ref{thm:main0}
and Theorem \ref{thm:main},
the following assertion is obtained:

\begin{introcorollary}\label{cor:cw}
 Let $f_0$, $f_1$ be two real analytic frontal  
 germs with singularities
 whose limiting normal curvatures do not vanish.
 Suppose that they are mutually isometric.
 Then there exists a continuous deformation
 of real analytic frontal germs $g_s$ $(0\le s \le 1)$
 satisfying the following properties{\rm:}
 \begin{enumerate}
  \item $g_0=f_0$ and $g_1=f_1$,
  \item $g_s$ is isometric to $g_0$,
  \item the limiting normal curvature of each $g_s$
	does not vanish.
 \end{enumerate}
 Moreover, if both $f_0$ and $f_1$
 are germs of cuspidal edges, swallowtails or  cuspidal cross caps,
 then so are $g_s$ for $0\le s\le 1$.
\end{introcorollary}

In particular, if $T$ is an orientation reversing isometry of $\R^3$,
then $T\circ f_0$ can be isometrically deformed into $f_0$
(see Remark~\ref{rmk:Tf} for details).

The paper is organized as follows:
In Section \ref{sec1}, we recall the definition of
Kossowski metrics, and define $A_2$ semi-definite points and
$A_3$ semi-definite points. The relationship between 
frontals and the induced Kossowski metrics is 
also discussed there.
In Section \ref{sec2}, we show the existence of 
certain orthogonal local coordinate systems
(called ``K-orthogonal coordinates'') for Kossowski metrics.
Using this, we show representation formulas for $A_2$ or
$A_3$ semi-definite points of Kossowski metrics.
As an application, we also discuss properties 
of distance functions induced by Kossowski metrics.
In Section \ref{sec3}, we explain the relationships
between Kossowski metrics and their induced
coherent tangent bundles.
In Section \ref{sec4}, we prove 
the main results,
using K-orthogonal coordinates. 
In Section \ref{sec6}, we mention some open questions
relating to our results.

\section{Kossowski metrics}
\label{sec1}

Throughout this paper, we fix a $C^r$-differentiable 
$2$-manifold $M^2$, where $r=\infty$ or $\omega$.
Let $ds^2$ be a positive semi-definite $C^r$-metric
on $M^2$.

\begin{definition}\label{def:sing-metric}
 A point $p\in M^2$ is called a \emph{regular point}
 of $ds^2$ if $ds^2$ is positive definite at $p$,
 and is called a \emph{singular point} 
 or \emph{semi-definite point} 
 if it is not regular.
\end{definition}

To distinguish from singular points of frontal maps,
we use the terminology \emph{semi-definite points}
for singular points of semi-definite metrics.
The set of semi-definite points in $M^2$
is called the \emph{semi-definite set}.

For the sake of simplicity, we use the notations
\begin{equation}\label{eq:ini}
 \partial_u:=\partial/\partial u, \qquad \partial_v:=\partial/\partial v 
\end{equation}
for each local coordinate system $(u,v)$ of $M^2$.

\begin{definition}\label{def:null}
 Let $p$ be a semi-definite point of the metric $ds^2$ on $M^2$.
 Then a non-zero tangent vector $\vect{v}\in T_pM^2$
 is called a \emph{null vector} if
 \begin{equation}\label{eq:vv}
  ds^2(\vect{v},\vect{v})=0.
 \end{equation}
 Moreover, a local coordinate neighborhood $(U;u,v)$ 
 is called \emph{adjusted} at $p\in U$ if
 $\partial_v$ gives a null vector
 of $ds^2$ at $p$.
\end{definition}
  
It can be easily checked that
\eqref{eq:vv} implies that $ds^2(\vect{v},\vect{x})=0$
holds for all $\vect{x}\in T_pM^2$.
If $(U;u,v)$ is a local coordinate neighborhood 
adjusted at a semi-definite point $p=(0,0)$, then 
$F(0,0)=G(0,0)=0$ holds, where
\begin{equation}\label{eq:I}
  ds^2=E\,du^2+2F\,du\,dv+G\,dv^2.
\end{equation}

We denote by $\X^r$ the set of $C^r$-differentiable 
vector fields on $M^2$, and by $C^{r}(M^2)$ 
the set of real valued $C^r$-differentiable functions on $M^2$.
We set
$\inner{X}{Y}:=ds^2(X,Y)$ for $X,Y\in \X^r$.
Kossowski \cite{Kossowski} defined a map
$\Theta:\X^r \times \X^r
\times \X^r \to C^{r}(M^2)$
as 
\begin{multline}\label{eq:Gamma}
\Theta(X,Y,Z):=\frac12
 \biggl(
 X\inner{Y}{Z}+Y\inner{X}{Z}-Z\inner{X}{Y}\\
 +\inner{[X,Y]}{Z}-\inner{[X,Z]}{Y}
 -\inner{[Y,Z]}{X}
 \biggr),
\end{multline}
where $[X,Y]:=XY-YX$.
We call $\Theta$ the \emph{Kossowski pseudo-connection}
with respect to the Kossowski metric.

If the metric $ds^2$ is positive definite, then  
$\Theta(X,Y,Z)=\langle \nabla_XY,Z\rangle$
holds, where
$\nabla$ is the Levi-Civita connection of $ds^2$.
One can easily check 
the following two identities (cf.\ \cite{Kossowski})
\begin{align}\label{eq:1}
&X\langle Y,Z\rangle=\Theta(X,Y,Z)+\Theta(X,Z,Y),\\
\label{eq:2}
&\Theta(X,Y,Z)-\Theta(Y,X,Z)=\langle [X,Y],Z\rangle.
\end{align}
The equation \eqref{eq:1} (resp.~\eqref{eq:2})
corresponds to the condition 
that  $\nabla$ is a metric connection (resp.\ is torsion free).
The following assertion can be also
easily verified:
\begin{proposition}[Kossowski \cite{Kossowski}]\label{prop:K-conn}
 For each $Y \in \X^r$
 and for each semi-definite point $p\in M^2$,
 the map
 \[
     T_pM^2\times T_pM^2\ni (\vect{v}_1,\vect{v}_2)\longmapsto
           \Theta(V_1,Y,V_2)(p)\in \R
 \]
 is a well-defined bilinear map, 
 where $V_j$ $(j=1,2)$
 are $C^r$-differentiable vector fields of $M^2$ 
 satisfying $\vect{v}_j=V_j(p)$.
\end{proposition}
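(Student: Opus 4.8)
The plan is to reduce the entire statement to the single assertion that the pseudo-connection $\Theta$ of \eqref{eq:Gamma} is $C^{r}(M^2)$-linear (tensorial) in its first and third arguments. Directly from \eqref{eq:Gamma}, the map $\Theta$ is additive and $\R$-linear in each of its three arguments; hence, once tensoriality in the first and third slots is established, fixing $Y$ produces an $\R$-bilinear map in $(V_1,V_2)$, and the standard principle that a tensorial expression evaluated at a point depends only on the pointwise values of its tensorial slots immediately yields both the independence from the choice of extensions $V_j$ and the bilinearity in $(\vect{v}_1,\vect{v}_2)$.

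First I would verify tensoriality in the first slot, namely $\Theta(fX,Y,Z)=f\,\Theta(X,Y,Z)$ for every $f\in C^{r}(M^2)$. Expanding the six terms of \eqref{eq:Gamma} after the substitution $X\mapsto fX$, one applies the Leibniz rule to the metric-derivative terms $Y\inner{fX}{Z}$ and $Z\inner{fX}{Y}$, and the commutator identity $[fX,Y]=f[X,Y]-(Yf)X$ to the bracket terms. The six terms carrying the factor $f$ reassemble precisely into $2f\,\Theta(X,Y,Z)$, while the four surviving derivative terms, namely $(Yf)\inner{X}{Z}$ and $(Zf)\inner{X}{Y}$ each occurring once with each sign, cancel in pairs. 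An entirely parallel computation, now using $[X,fZ]=f[X,Z]+(Xf)Z$ together with the symmetry of $\inner{\cdot}{\cdot}$, gives $\Theta(X,Y,fZ)=f\,\Theta(X,Y,Z)$; here the leftover terms are $(Xf)\inner{Y}{Z}-(Xf)\inner{Z}{Y}$ and $(Yf)\inner{X}{Z}-(Yf)\inner{Z}{X}$, which again vanish. I would stress that these cancellations are purely formal consequences of the Leibniz rule for the derivative $X\inner{\cdot}{\cdot}$ and of the commutator identities, so they hold at \emph{every} point of $M^2$, degenerate or not: no positivity of $ds^2$ is used, and the non-tensorial (derivation) behaviour of $\Theta$ is confined entirely to the second slot, which is held fixed at $Y$.

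To conclude, I would fix a local coordinate system $(u,v)$ around $p$ and write $V_1=a\,\partial_u+b\,\partial_v$ and $V_2=c\,\partial_u+d\,\partial_v$ with $a,b,c,d\in C^{r}$. The two tensoriality identities give
\begin{equation*}
\Theta(V_1,Y,V_2)(p)=\sum_{i,j\in\{u,v\}}\xi_i(p)\,\eta_j(p)\,\Theta(\partial_i,Y,\partial_j)(p),
\end{equation*}
where $(\xi_u,\xi_v)=(a,b)$ and $(\eta_u,\eta_v)=(c,d)$. The right-hand side manifestly depends only on $(a(p),b(p))=\vect{v}_1$ and $(c(p),d(p))=\vect{v}_2$, which is the claimed well-definedness, and it is visibly $\R$-bilinear in $(\vect{v}_1,\vect{v}_2)$.

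The one point requiring care, the \textbf{main obstacle} such as it is, is the bookkeeping of the cancellation in the tensoriality step: one must track the signs of the six terms of \eqref{eq:Gamma} alongside the two commutator Leibniz rules, since a single sign slip destroys the cancellation of the derivative terms. Beyond this, the argument is the familiar tensoriality-implies-pointwise-dependence mechanism, adapted to the degenerate setting simply by noting that the degeneracy of $ds^2$ never enters.
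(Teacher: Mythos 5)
Your proposal is correct. Note that the paper does not actually print a proof of this proposition: it is introduced with ``The following assertion can be also easily verified'' and the verification is left to the reader. Your argument is precisely that omitted verification, and it is carried out in the same style as the proof the paper \emph{does} give for Lemma \ref{lem:K-conn}: expand \eqref{eq:Gamma} after substituting $fX$ (resp.\ $fZ$), apply the Leibniz rule to the metric-derivative terms and the identities $[fX,Y]=f[X,Y]-(Yf)X$, $[X,fZ]=f[X,Z]+(Xf)Z$ to the bracket terms, and watch the first-order terms cancel. Your sign bookkeeping checks out in both slots, and the passage from $C^r$-linearity to pointwise dependence and bilinearity via a coordinate expansion is the standard tensoriality mechanism (legitimate here because $\Theta$ is a differential expression, hence local, so working on a coordinate neighborhood is harmless --- the paper itself computes $\Theta$ on coordinate fields in the proof of Proposition \ref{prop:property}). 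Your closing remark is also the right conceptual point and sharpens the picture: tensoriality in the first and third slots holds at \emph{every} point with no use of degeneracy, whereas tensoriality in the second slot genuinely fails in general and is recovered only at semi-definite points with a null third argument --- which is exactly the content, and the reason for the extra hypothesis, of Lemma \ref{lem:K-conn}.
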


For each $p\in M^2$, the subspace
\[
 \N_p:=\biggl\{\vect{v} \in T_pM^2\,;\, ds^2(\vect{v},\vect{w})=0
   \mbox{ for all $\vect{w}\in T_pM^2$}
   \biggr\}
\]
is called the \emph{null space} or the \emph{radical} of $ds^2$
at $p$.
A non-zero vector belonging to $\N_p$ is 
a null vector at $p$ (cf.\ Definition \ref{def:null}).
\begin{lemma}[Kossowski \cite{Kossowski}]\label{lem:K-conn}
 Let $p$ be a semi-definite point of $ds^2$.
 Then the Kossowski pseudo-connection $\Theta$
 induces a tri-linear map
 \[
    \hat \Theta_p:T_pM^2\times T_pM^2\times \N_p
            \ni (\vect{v}_1,\vect{v}_2,\vect{v}_3) \longmapsto
          \Theta(V_1,V_2,V_3)(p)\in \R,
 \]
 where
 $V_j$ $(j=1,2,3)$ are $C^r$-vector fields of $M^2$
 such that $\vect{v}_j=V_j(p)$.
\end{lemma}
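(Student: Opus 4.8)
The plan is to locate the single failure of tensoriality of $\Theta$ in its middle argument and to show that this failure is annihilated at $p$ exactly by the hypothesis $\vect{v}_3\in\mathcal{N}_p$. First I would record that $\Theta$ is $C^r$-linear in its first and third slots: substituting $fX$ (resp.\ $fZ$) into \eqref{eq:Gamma}, the Leibniz terms produced by differentiating $f$ cancel in pairs, so that $\Theta(fX,Y,Z)=f\,\Theta(X,Y,Z)$ and $\Theta(X,Y,fZ)=f\,\Theta(X,Y,Z)$. This tensoriality in the outer arguments is precisely the content of Proposition \ref{prop:K-conn}; in particular, for any fixed $Y\in\mathcal{X}^r$ the value $\Theta(V_1,Y,V_3)(p)$ depends only on $\vect{v}_1=V_1(p)$ and $\vect{v}_3=V_3(p)$, and is bilinear in them. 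So the only remaining issue is the dependence on the extension of the middle vector field.

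Next I would analyze the middle slot. Using the torsion identity \eqref{eq:2}, the first-slot tensoriality just established, and $[X,fY]=f[X,Y]+(Xf)Y$, a short computation yields
$$
\Theta(X,fY,Z)=f\,\Theta(X,Y,Z)+(Xf)\,\inner{Y}{Z}.
$$
Thus $\Theta$ fails to be tensorial in the middle argument only through the defect term $(Xf)\inner{Y}{Z}$. This is where the hypothesis enters: if $Z(p)=\vect{v}_3\in\mathcal{N}_p$, then $\inner{Y}{Z}(p)=\inner{Y(p)}{Z(p)}=0$ for every $Y\in\mathcal{X}^r$, so the defect vanishes at $p$.

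With these two facts, well-definedness in the middle slot follows by the standard localization argument. Given two extensions $V_2,V_2'$ of $\vect{v}_2$, their difference $W:=V_2-V_2'$ vanishes at $p$; writing $W=a\,\partial_u+b\,\partial_v$ in a coordinate chart with $a(p)=b(p)=0$ and using additivity of $\Theta$ together with the displayed formula,
$$
\Theta(V_1,W,V_3)(p)=a(p)\,\Theta(V_1,\partial_u,V_3)(p)+(V_1a)(p)\,\inner{\partial_u}{V_3}(p)+b(p)\,\Theta(V_1,\partial_v,V_3)(p)+(V_1b)(p)\,\inner{\partial_v}{V_3}(p).
$$
Every term on the right vanishes at $p$: the first and third because $a(p)=b(p)=0$, the second and fourth because $\inner{\partial_u}{V_3}(p)=\inner{\partial_v}{V_3}(p)=0$ as $\vect{v}_3\in\mathcal{N}_p$. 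Hence $\Theta(V_1,V_2,V_3)(p)$ is independent of the extension $V_2$, while independence in $V_1$ and $V_3$ is supplied directly by Proposition \ref{prop:K-conn}. For tri-linearity, additivity in each slot is immediate from \eqref{eq:Gamma}; $\R$-homogeneity in the middle slot follows from the displayed formula with constant $f=\lambda$ (so that $Xf=0$), and homogeneity in the outer slots is part of Proposition \ref{prop:K-conn}. This establishes that $\hat\Theta_p$ is a well-defined tri-linear map.

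The only genuinely delicate step is the middle-slot computation and the recognition that its defect term $(Xf)\inner{Y}{Z}$ is killed at $p$ exactly by the null condition $\vect{v}_3\in\mathcal{N}_p$; everything else is bookkeeping that reduces to Proposition \ref{prop:K-conn} and the additive multilinearity of $\Theta$.
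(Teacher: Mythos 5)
Your proposal is correct and takes essentially the same approach as the paper: the paper's entire proof is the direct expansion of \eqref{eq:Gamma} yielding $2\Theta(V_1,fV_2,V_3)=2f\,\Theta(V_1,V_2,V_3)+2(V_1f)\inner{V_2}{V_3}$, whose defect term is annihilated at $p$ by $V_3(p)\in\mathcal N_p$ --- exactly your displayed identity, which you derive instead via \eqref{eq:2} together with first-slot tensoriality. The extra localization bookkeeping you spell out (and the reduction of slots one and three to Proposition \ref{prop:K-conn}) is left implicit in the paper, so nothing differs in substance.
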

\begin{proof}
 Applying \eqref{eq:Gamma}, 
 \begin{align*}
  & 2\Theta(V_1,fV_2,V_3) \\
  &\phantom{aaa}=
  V_1\langle fV_2,V_3\rangle+
  fV_2\langle V_1,V_3\rangle-V_3\langle V_1,fV_2
  \rangle\\
  &\phantom{aaaaaaaaa}+
  \langle [V_1,fV_2],V_3\rangle-\langle [V_1,V_3],fV_2
  \rangle
  -\langle [fV_2,V_3],V_1\rangle \\
  &\phantom{aaa}=
  2f\Theta(V_1,V_2,V_3)
  +(V_1f) \langle V_2,V_3\rangle-
  (V_3f)\langle V_1,V_2\rangle \\
  &\phantom{aaaaaaaaaaaaaaaaaaaaaaaaa}+
  (V_1f)
  \langle V_2,V_3\rangle +(V_3f) \langle V_2,V_1\rangle\\
  &\phantom{aaa}=
  2f\Theta(V_1,V_2,V_3)
    +2(V_1f) \langle V_2,V_3\rangle= 2f\Theta(V_1,V_2,V_3)
 \end{align*}
 holds at $p$,
 where the fact that $V_3(p)\in \N_p$ 
 is used to show the last equality.
\end{proof}
\begin{definition}\label{def:adms}
 A semi-definite point $p$ of the metric $ds^2$
 is called 
 \emph{admissible}\footnote{%
   Admissibility was originally introduced by 
   Kossowski \cite{Kossowski}. He called it
   $d(\langle,\rangle)$-\emph{flatness}.
} 
 if $\hat \Theta_p$ in Lemma~\ref{lem:K-conn} vanishes.
\end{definition}
A semi-definite point of the metric $ds^2$
is called of \emph{rank one} if $\N_p$
is a $1$-dimensional subspace of $T_pM^2$.

By a suitable affine transformation
in the $uv$-plane,
one can take a local coordinate system
adjusted at $p$
(cf.\ Definition \ref{def:sing-metric}).
The following assertion gives a characterization
of admissible semi-definite points:
\begin{proposition}[\cite{HHNSUY}]\label{prop:property}
 Let $(u,v)$ be a $C^r$-differentiable local coordinate system
 adjusted at a rank one semi-definite 
 point $p$.
 Then  $p$ is admissible if and only if 
 \begin{equation}\label{eq:admissible}
   F=G=0,\quad E_v = 2 F_u, \quad G_u=G_v=0 
 \end{equation}
 hold at $p=(0,0)$, 
 where $ds^2=E\,du^2+2F\,du\,dv+G\,dv^2$. 
\end{proposition}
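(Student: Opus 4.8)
The plan is to reduce admissibility to the vanishing of a few explicit derivatives of the metric coefficients via a direct computation with the coordinate vector fields. First I would record that, since $p$ is a rank one semi-definite point and the coordinates are adjusted, the null space $\mathcal{N}_p$ is one-dimensional and is spanned by $\partial_v(p)$: indeed adjustedness means $\partial_v(p)$ is a null vector, so $\partial_v(p)\in\mathcal{N}_p$, while $\dim\mathcal{N}_p=1$. By Definition~\ref{def:adms}, $p$ is admissible precisely when the trilinear form $\hat\Theta_p$ of Lemma~\ref{lem:K-conn} vanishes. Since $\hat\Theta_p$ is trilinear, with its third slot ranging over the line $\mathcal{N}_p=\R\,\partial_v(p)$ and its first two slots over $T_pM^2=\operatorname{span}\{\partial_u(p),\partial_v(p)\}$, this vanishing is equivalent to $\Theta(\partial_i,\partial_j,\partial_v)(p)=0$ for all $i,j\in\{u,v\}$.

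The key step is then a short evaluation of these quantities. Because the coordinate fields commute, $[\partial_u,\partial_v]=0$, every bracket term in \eqref{eq:Gamma} drops out, and \eqref{eq:2} shows that $\Theta$ is symmetric in its first two arguments when these are coordinate fields. Writing $\langle\partial_u,\partial_u\rangle=E$, $\langle\partial_u,\partial_v\rangle=F$, $\langle\partial_v,\partial_v\rangle=G$ and applying \eqref{eq:Gamma}, I obtain $\Theta(\partial_u,\partial_u,\partial_v)=F_u-\tfrac12 E_v$, together with $\Theta(\partial_u,\partial_v,\partial_v)=\Theta(\partial_v,\partial_u,\partial_v)=\tfrac12 G_u$ and $\Theta(\partial_v,\partial_v,\partial_v)=\tfrac12 G_v$.

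Finally I would read off the equivalence. The simultaneous vanishing of these three expressions at $p=(0,0)$ is exactly $E_v=2F_u$, $G_u=0$ and $G_v=0$ there; combining this with the adjusted conditions $F(0,0)=G(0,0)=0$ (which, as noted just after Definition~\ref{def:null}, hold automatically in adjusted coordinates) yields precisely \eqref{eq:admissible}. The converse is immediate: if \eqref{eq:admissible} holds, then each of the three computed quantities vanishes at $p$, hence $\hat\Theta_p=0$ and $p$ is admissible.

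I do not anticipate a genuine obstacle, since the argument is a direct computation; the only point needing care is the reduction to coordinate vector fields. This is licensed exactly by Lemma~\ref{lem:K-conn}, whose well-definedness statement permits replacing arbitrary vector fields attaining the prescribed values at $p$ by $\partial_u$ and $\partial_v$, and it is the adjustedness of the coordinates that makes $\partial_v(p)$ a legitimate entry for the third slot of $\hat\Theta_p$.
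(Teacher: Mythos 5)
Your proof is correct and follows essentially the same route as the paper: both reduce admissibility to the vanishing of $\hat\Theta_p$ on the coordinate fields (justified by Lemma~\ref{lem:K-conn} and the fact that $\partial_v(p)$ spans $\mathcal N_p$), evaluate $\Theta(\partial_u,\partial_u,\partial_v)$, $\Theta(\partial_u,\partial_v,\partial_v)$, $\Theta(\partial_v,\partial_v,\partial_v)$ via \eqref{eq:Gamma} with the brackets dropping out, and read off $E_v=2F_u$, $G_u=G_v=0$, with $F=G=0$ coming for free from adjustedness. Your added remarks on the symmetry in the first two slots (via \eqref{eq:2}) and on the trilinearity reduction are points the paper leaves implicit, but they do not change the argument.
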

\begin{proof}
 Since $[\partial_u,\partial_v]$ vanishes,
 and $\partial_v\in \N_p$ at $p$,
 the formula \eqref{eq:Gamma} yields that
 \begin{align*}
  &2\hat\Theta(\partial_u,\partial_u,\partial_v)
  =2\partial_u\langle \partial_u,\partial_v \rangle-
  \partial_v\langle \partial_u,\partial_u \rangle=2F_u-E_v,\\
  &2\hat\Theta(\partial_u,\partial_v,\partial_v)
  =\partial_u\langle \partial_v,\partial_v \rangle+
  \partial_v\langle \partial_u,\partial_v \rangle
  - \partial_v\langle \partial_u,\partial_v \rangle=
  \partial_u\langle \partial_v,\partial_v \rangle
  =G_u,\\
  &2\hat\Theta(\partial_v,\partial_v,\partial_v)
  =\partial_v\langle \partial_v,\partial_v \rangle=G_v
 \end{align*}
 hold at the origin $(0,0)$.
 Thus, $\hat \Theta_p$ vanishes if
 and only if
 \eqref{eq:admissible} holds at $p$.
\end{proof}

\begin{definition}\label{def:K2}
 A $C^r$-differentiable positive 
 semi-definite metric $ds^2$ is 
 called a \emph{Kossowski metric} if
 each semi-definite point $p\in M^2$
 of $ds^2$ is admissible 
 and there exists
 a $C^r$-function $\lambda(u,v)$ 
 defined on a local coordinate
 neighborhood $(U;u,v)$ of $p$ 
 such that
 \begin{align}
  \label{eq:k1}
  &  EG-F^2=\lambda^2 \qquad (\mbox{on $U$}), \\
  \label{eq:k2}
  & (\lambda_u(p),\lambda_v(p))\ne (0,0),
 \end{align}
 where $E,F,G$ are $C^r$-functions on $U$
 satisfying \eqref{eq:I}.
\end{definition}

We call such a $\lambda$ the \emph{signed area density function}
of $ds^2$ with respect to the local coordinate 
neighborhood $(U;u,v)$. In fact, if $ds^2$ is positive definite,
then $dA:=|\lambda(u,v)|du\wedge dv$ gives the area element of
the metric $ds^2$.
The function $\lambda$ plays a 
role of an identifier of semi-definite points.
In fact, $\lambda(p)=0$ if and only if $p$
is a semi-definite point.
If $ds^2$ is the first fundamental form of
a frontal $f:M^2\to \R^3$, then 
the function $\lambda$ given in 
\eqref{eq:lambda} coincides
with the signed area density function
of $ds^2$.

As pointed out in the introduction (cf.~Fact \ref{fact:p-back}),
the first fundamental form of
a frontal $f:M^2\to \R^3$
whose singular points are all non-degenerate
is a Kossowski metric.

\begin{lemma}\label{lem:null-add}
 We let $p$ be a semi-definite point of the Kossowski metric
 $ds^2$. 
 Then the null space of $ds^2$ at $p$ is $1$-dimensional.
\end{lemma}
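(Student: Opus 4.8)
The plan is to show that at a semi-definite point $p$ of a Kossowski metric, the null space $\mathcal N_p$ cannot be all of $T_pM^2$, so that by rank considerations it must be exactly $1$-dimensional. Since $p$ is a semi-definite (i.e. not regular) point, the metric $ds^2$ is degenerate at $p$, so $\mathcal N_p$ is at least $1$-dimensional; the whole content of the lemma is the upper bound $\dim \mathcal N_p \le 1$, equivalently $\mathcal N_p \ne T_pM^2$. The key structural fact I would exploit is the signed area density condition \eqref{eq:k1}--\eqref{eq:k2}: on an adjusted coordinate neighborhood $(U;u,v)$ centered at $p=(0,0)$ one has $EG-F^2=\lambda^2$ with $\nabla\lambda(p)\ne\vect0$.

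First I would argue by contradiction, assuming $\dim\mathcal N_p = 2$, i.e. $\mathcal N_p = T_pM^2$. This forces $E(p)=F(p)=G(p)=0$, since every coordinate vector is then a null vector and $ds^2$ vanishes identically as a bilinear form at $p$. In particular $\lambda(p)^2 = (EG-F^2)(p)=0$, so $\lambda(p)=0$, consistent with $p$ being a singular point; the real work is to extract a contradiction with $\nabla\lambda(p)\ne\vect0$. The idea is that if $E,F,G$ all vanish to first order appropriately, then $EG-F^2$ vanishes to second order, whence $\lambda^2=EG-F^2$ forces $\lambda$ to vanish to first order at $p$, contradicting \eqref{eq:k2}.

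To make this precise I would compute the first-order behaviour of $\lambda^2=EG-F^2$ at $p$. Differentiating, $(\lambda^2)_u = 2\lambda\lambda_u$ and $(\lambda^2)_v=2\lambda\lambda_v$; since $\lambda(p)=0$, both first partials of $\lambda^2$ vanish at $p$, so $EG-F^2$ has vanishing gradient at $p$ automatically. Hence the genuine constraint comes from second order: a short computation gives, at $p$ where $E=F=G=0$,
\begin{equation}
(\lambda_u)^2 = \tfrac12(EG-F^2)_{uu} = E_uG_u - (F_u)^2, \quad
(\lambda_v)^2 = \tfrac12(EG-F^2)_{vv} = E_vG_v-(F_v)^2,
\end{equation}
together with the mixed relation $\lambda_u\lambda_v = \tfrac12(EG-F^2)_{uv}$. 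Now I bring in the admissibility relations from Proposition~\ref{prop:property}: in an adjusted coordinate system at $p$ one has $F=G=0$ with $E_v=2F_u$ and $G_u=G_v=0$ at $p$. The vanishing of $G_u$ and $G_v$ kills the $G$-derivative terms in the displayed formulas, leaving $(\lambda_u)^2 = -(F_u)^2 \le 0$ and $(\lambda_v)^2 = -(F_v)^2 \le 0$. Since squares of real numbers are nonnegative, this forces $\lambda_u(p)=F_u(p)=0$ and $\lambda_v(p)=F_v(p)=0$, i.e. $\nabla\lambda(p)=\vect0$, directly contradicting \eqref{eq:k2}.

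The main obstacle I anticipate is bookkeeping the second-order Taylor coefficients correctly and confirming that admissibility (via $G_u=G_v=0$ in the adjusted frame) really does eliminate every term that could make $(\lambda_u)^2$ or $(\lambda_v)^2$ positive; the sign analysis is what delivers the contradiction, so I would double-check that no $E$- or mixed-derivative term survives. One subtlety is that Proposition~\ref{prop:property} is stated for \emph{rank one} semi-definite points, whereas here I am treating the hypothetical rank-two case, so I should either verify that admissibility in Definition~\ref{def:adms} (vanishing of $\hat\Theta_p$) still yields the needed relations $G_u=G_v=0$ when $\mathcal N_p=T_pM^2$, or recompute $\hat\Theta_p(\partial_u,\partial_v,\partial_v)=\tfrac12 G_u$ and $\hat\Theta_p(\partial_v,\partial_v,\partial_v)=\tfrac12 G_v$ directly from \eqref{eq:Gamma} as in the proof of Proposition~\ref{prop:property}, noting that when $\mathcal N_p = T_pM^2$ every coordinate vector lies in $\mathcal N_p$ so $\hat\Theta_p$ is defined on all arguments. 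Once that is secured, the contradiction with \eqref{eq:k2} completes the proof.
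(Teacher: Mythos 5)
Your proof is correct and follows essentially the same route as the paper: both differentiate $EG-F^2=\lambda^2$ twice at $p$, use admissibility (computed directly from \eqref{eq:Gamma}, which is valid since $\partial_v\in\mathcal N_p$ regardless of rank) to kill the $G$-derivative terms, and force $\nabla\lambda(p)=\vect{0}$ by a sum-of-squares argument, contradicting \eqref{eq:k2}. The only cosmetic difference is that the paper keeps $E(p)$ general in adjusted coordinates, obtaining $2\lambda_u(p)^2=E(p)G_{uu}(p)-2F_u(p)^2$ and deriving the contradiction from $E(p)=0$, whereas you impose full degeneracy at the outset so the surviving cross term is $E_uG_u$ instead of $EG_{uu}$; your closing concern about the rank-one hypothesis in the admissibility proposition is exactly the right point to check, and your direct recomputation resolves it.
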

\begin{proof}
 Since $\lambda,F,G,G_u$ and $G_v$ vanish at $p$, 
 twice differentiating 
 the equality $EG-F^2=\lambda^2$ with respect to
 $u$ and $v$, we have
 \[
    2\lambda_u(p)^2=E(p) G_{uu}(p)-2F_u(p)^2,\qquad 
     2\lambda_v(p)^2=E(p) G_{vv}(p)-2F_v(p)^2.
 \]
 If $E(p)=0$ then we have
 $\lambda_u(p)^2+F_u(p)^2=0$ and
 $\lambda_v(p)^2+F_v(p)^2=0$,
 which imply $(\lambda_u(p),\lambda_v(p))=(0,0)$
 contradicting \eqref{eq:k2}.
 So we have $E(p)\ne 0$, that is, $\partial_u$
 is not a null vector. Thus, 
 $\N_p$
 is exactly $1$-dimensional, proving the
 assertion.
\end{proof}

By \eqref{eq:k2}, we can
apply the implicit function theorem
for $\lambda(u,v)=0$, and
find  a $C^r$-regular curve
$\sigma(t)$ $(|t|<\varepsilon)$ in the $uv$-plane 
(called the \emph{characteristic curve}
 or  the \emph{singular curve})
parametrizing the semi-definite set of $ds^2$
such that $\sigma(0)=p$
and $\sigma:(-\epsilon,\epsilon)\to U$ 
is an embedding, where $\epsilon$ is a sufficiently
small positive number.
The following assertion holds:

\begin{proposition}\label{prop:lambda}
 Let $ds^2$ be a $C^r$-differentiable Kossowski metric on $M^2$.
 We let $\lambda:U\to \R$ 
 be a $C^r$-function
 satisfying 
 \eqref{eq:k1} on a connected 
 $C^r$-coordinate neighborhood $(U;u,v)$ of $M^2$.
 Then, the $2$-form
 \begin{equation}\label{eq:dA}
  d\hat A:=\lambda du\wedge dv
 \end{equation}
 does not depend on the choice of
 such local coordinates, up to
 $\pm$-ambiguity, and gives a
 $C^r$-differentiable $2$-form defined on the universal
 covering of $M^2$.
\end{proposition}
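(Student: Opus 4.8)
\medskip

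My plan is to reduce the proposition to a single pointwise transformation law, which already pins down $d\hat A$ up to a sign at each point, and then to promote this pointwise sign to a globally constant one by a continuity argument; the globalization on the universal cover is the last, essentially routine, step.

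First I would record the transformation law. Let $(u,v)$ and $(\tilde u,\tilde v)$ be two $C^r$-coordinate systems on a connected common domain $W$, with associated functions $\lambda$ and $\tilde\lambda$ satisfying \eqref{eq:k1}. Writing $g$ for the symmetric matrix of $ds^2$ as in \eqref{eq:I} and $A:=\partial(u,v)/\partial(\tilde u,\tilde v)$ for the Jacobian matrix of the coordinate change, one has $\tilde g=\trans{A}\,g\,A$ and hence $\tilde\lambda^2=(\det A)^2\lambda^2$ by \eqref{eq:k1}. Since also $du\wedge dv=(\det A)\,d\tilde u\wedge d\tilde v$, these combine to give
\[
\lambda\,du\wedge dv=(\det A)\,\lambda\,d\tilde u\wedge d\tilde v=\pm\,\tilde\lambda\,d\tilde u\wedge d\tilde v
\]
at every point of $W$, where the sign is a priori point-dependent. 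Everything then reduces to showing that the function $\epsilon:=\tilde\lambda/\big((\det A)\lambda\big)$, which is defined and $\{\pm1\}$-valued wherever $\lambda\neq0$, is constant on $W$.

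The hard part will be ruling out a jump of $\epsilon$ across the semi-definite set $\Sigma=\{\lambda=0\}$; this is the only place where the \emph{Kossowski} hypothesis, and specifically the non-degeneracy \eqref{eq:k2}, enters. Away from $\Sigma$ the function $\epsilon$ is locally constant, since $\det A\neq0$ (it is the Jacobian of a diffeomorphism). At each point of $\Sigma$ we have $d\lambda\neq0$ by \eqref{eq:k2}, and then $d\tilde\lambda|_\Sigma=\pm(\det A)\,d\lambda|_\Sigma\neq0$ as well; hence $\Sigma$ is a regular curve along which both $\lambda$ and $\tilde\lambda$ vanish to exactly first order. Writing $\Sigma$ locally as $\{s=0\}$ and applying Hadamard's lemma (valid in the $C^\infty$ and $C^\omega$ categories alike), I would get $\lambda=s\,a$ and $\tilde\lambda=s\,b$ with $a,b$ of class $C^r$ and nowhere zero on $\Sigma$. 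Thus $\tilde\lambda/\lambda=b/a$ extends across $\Sigma$ as a continuous, nowhere-zero function, so $\epsilon$ extends continuously to all of $W$; being $\{\pm1\}$-valued on the dense set $W\setminus\Sigma$, it is $\{\pm1\}$-valued everywhere, hence constant on the connected $W$. This is exactly the asserted $\pm$-ambiguity.

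To globalize, I would cover $M^2$ by connected charts $\{U_\alpha\}$, each carrying a form $\omega_\alpha:=\lambda_\alpha\,du_\alpha\wedge dv_\alpha$; by the previous step, on each connected component of $U_\alpha\cap U_\beta$ one has $\omega_\alpha=\epsilon_{\alpha\beta}\,\omega_\beta$ with $\epsilon_{\alpha\beta}\in\{\pm1\}$ constant, and the $\epsilon_{\alpha\beta}$ satisfy the cocycle identity on triple overlaps. They define a class in $H^1(M^2;\Z/2)$, the obstruction to choosing the signs coherently. Pulling back along the universal covering $\pi\colon\tilde M\to M^2$ and using that $\tilde M$ is simply connected (so the sign has trivial monodromy around every loop), I can choose $\delta_\alpha\in\{\pm1\}$ with $\epsilon_{\alpha\beta}=\delta_\alpha\delta_\beta$; then the forms $\delta_\alpha\,\pi^*\omega_\alpha$ agree on overlaps and patch to a single $C^r$-differentiable $2$-form $d\hat A$ on $\tilde M$, each $\lambda_\alpha\,du_\alpha\wedge dv_\alpha$ being of class $C^r$. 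The only genuine obstacle is the sign-continuity across $\Sigma$; granting it, the covering-space argument is standard.
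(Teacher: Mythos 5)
Your proposal is correct and takes essentially the same route as the paper: the core of both arguments is the division (Hadamard) lemma along the singular curve together with the non-degeneracy \eqref{eq:k2}, which makes the ratio of the two area densities extend continuously across $\Sigma$ as a $\pm 1$-valued function, hence constant on a connected domain. The only differences are organizational: you merge the paper's two separate steps (uniqueness of $\lambda$ up to sign in a fixed chart, then the coordinate-change law \eqref{eq:wedge}) into a single argument for the function $\epsilon$, and you make explicit the $\Z/2$-cocycle patching on the universal cover, which the paper leaves as an implicit standard step.
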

\begin{proof}
 Let $(U;u,v)$ be a connected local coordinate neighborhood
 at $p$. Then $ds^2$ has the expression 
 as in \eqref{eq:I}.
 Let $\lambda_1,\lambda_2$ 
 be two signed area density functions
 on $U$ satisfying
 $(\lambda_1)^2=(\lambda_2)^2=EG-F^2$.
 We fix $q\in U$ arbitrarily.
 If $q$ is a regular point, then
 $\lambda_1=\pm \lambda_2$ holds
 on a sufficiently small neighborhood of $W$ of $q$,
 obviously. So we suppose that $q\in U$ is a semi-definite point.
 Since we have observed that the semi-definite 
 set around $q$
 can be parametrized as a regular curve,
 we can take a new local coordinate system 
 $(V;a,b)$ ($V\subset U$)
 centered at $q$ so that the
 $a$-axis is the characteristic  curve.
 Then we have
 $\lambda_1(a,0)=\lambda_2(a,0)=0$. 
 By the division lemma, there exist two $C^r$-function 
 germs $\hat \lambda_1$, $\hat \lambda_2$
 at $(0,0)$  such that
 \[
    \lambda_i(a,b)=b\hat \lambda_i(a,b)\qquad 
    (i=1,2)
 \]
 on $V$.
 In particular, $(\lambda_1)_a(0,0)=(\lambda_2)_a(0,0)=0$
 hold. By \eqref{eq:k2}, we have that
 \[
    0\ne (\lambda_i)_b(0,0)=\hat \lambda_i(0,0) \qquad
          (i=1,2),
 \]
 and
 $\phi:=\lambda_1/\lambda_2=\hat \lambda_1/\hat \lambda_2$
 gives a $C^r$-function defined 
 on a connected neighborhood $W(\subset V)$ of the origin.
 Then we have
 \[
   (\lambda_2)^2=(\lambda_1)^2=(\lambda_2)^2 \phi^2.
 \]
 Since $\lambda_1\ne 0$ except on the $a$-axis,
 $1=\phi^2$
 holds on $W$ by the continuity of $\phi$,
 and that implies $\lambda_1=\pm \lambda_2$ on $W$.
 Since $U$ is connected and $q$ is arbitrarily 
 fixed, $\lambda_1=\lambda_2$ or $\lambda_1=-\lambda_2$ holds 
 on $U$. So we now set $\lambda:=\lambda_1$.
 
 We next prove the second assertion.
 Let $(x,y)$ be another local coordinate system
 on $U$.
 Then 
 \begin{equation}\label{eq:wedge}
  \lambda du\wedge dv=
   \lambda (u_x dx+u_ydy)\wedge (v_x dx+v_ydy)
   =\lambda (u_x v_y-u_yv_x)dx\wedge dy
 \end{equation}
 holds on $U$.
 On the other hand, if we write
 $ds^2=\tilde Edx^2+2\tilde Fdxdy+\tilde G dy^2$,
 then we have that
 \[
     \tilde \lambda^2
         =
          \tilde E\tilde G-\tilde F^2
              =(EG-F^2)(u_x v_y-u_yv_x)^2
            =\lambda^2 (u_x v_y-u_yv_x)^2,
 \]
 and so
 $\pm \lambda (u_x v_y-u_yv_x)$
 gives the area density function 
 with respect to the coordinate
 neighborhood $(U;x,y)$.
 Thus, \eqref{eq:wedge} yields 
 the last assertion.
\end{proof}

\begin{remark}\label{rem:dA}
 The $2$-form $d\hat A$ on $U$ given in \eqref{eq:dA}
 is called a $($local$)$ {\it signed area element}. 
 If $d\hat A$ is well-defined  on  $M^2$, that is,
 if it can be taken to be a $2$-form on $M^2$ 
 so that its restriction to
 each local coordinate neighborhood $(U;u,v)$ gives
 a signed area element of $(U;u,v)$,
 then we say that $ds^2$ is {\it co-orientable}
 on $M^2$.
\end{remark}

Let $p$ be a semi-definite point of a Kossowski metric $ds^2$,
and let $\sigma(t)$ be the characteristic curve 
satisfying $\sigma(0)=p$. 
Then there exists a $C^r$-differentiable non-zero vector field $\eta(t)$
along $\sigma(t)$ which points in the null direction of
the metric $ds^2$.
We call $\eta(t)$ a \emph{null vector field} along 
the characteristic curve $\sigma(t)$.

\begin{definition}
\label{def:a2a3add}
 A semi-definite point $p\in M^2$ of a
 Kossowski metric $ds^2$ is called an 
 \emph{$A_2$ semi-definite point}
 or \emph{semi-definite point of type $A_2$}
 if the derivative $\sigma'(0)$
 of the characteristic  curve at $p$ is linearly independent
 of the null direction $\eta(0)$.
 A semi-definite point $p$ which is not 
 of type $A_2$ is called an  \emph{$A_3$ semi-definite point},
 or \emph{semi-definite point of type $A_3$}
 if  
 \begin{equation}\label{eq:a3}
  \left.\frac{d}{dt}\right|_{t=0}
        \det\left( \sigma'(t),\,\eta(t) \right)\neq0.
 \end{equation}
\end{definition}

\begin{remark}\label{rem:a2a3}
 Cuspidal edges {\rm(}resp.\ swallowtails{\rm)}
 are called $A_2$-singularities 
 {\rm(}resp.\ $A_3$-singularities{\rm)} of wave fronts.
 These points are
 corresponding to $A_2$ semi-definite points 
 {\rm(}resp.\ $A_3$  semi-definite points{\rm)}
 with respect to the induced Kossowski metrics.
 The naming of $A_i$ $(i=2,3)$ points 
 comes from this fact.
\end{remark}

\begin{remark}\label{rem:eta}
 We can extend the null vector field $\eta(t)$
 to be a $C^r$-differentiable vector field $\tilde \eta$ 
 defined on a neighborhood of $p$.
 Then it can be easily checked that
 $p$ is an $A_2$ semi-definite point 
 {\rm(}resp.\ an $A_3$ semi-definite point{\rm)}
 if and only if
 \[
    \lambda_{\tilde \eta}(p)\ne 0
      \qquad (\mbox{resp.\ } 
    \lambda_{\tilde \eta}(p)=0
     \mbox{ and } \lambda_{\tilde\eta\tilde \eta}(p)
          \ne 0),
 \] 
 where 
 $\lambda_{\tilde \eta}:=d\lambda(\tilde \eta)$,
 and
 $\lambda_{\tilde \eta\tilde \eta}:=d\lambda_{\tilde \eta}(\tilde \eta)$.
\end{remark}

We denote by $\Sigma$ the semi-definite set of 
the Kossowski metric $ds^2$ in $M^2$ 
(cf.\ Definition \ref{def:sing-metric}).
Let $K$ be the Gaussian curvature of $ds^2$
defined on $M^2\setminus \Sigma$.
For each sufficiently small local coordinate
system $(U;u,v)$, the signed area element $d\hat A$
is defined (cf.\ Proposition \ref{prop:lambda}).
Then a $2$-form 
\begin{equation}\label{eq:euler0}
   \Omega:=K\,d\hat A
\end{equation}
is defined on $U\setminus \Sigma$, which
can be extended  as a $C^r$-differentiable $2$-form on $U$
(cf.~\cite{Kossowski} and \cite[Theorem 2.15]{HHNSUY}).
We call $\Omega$ the (local) \emph{Euler form} 
associated to $ds^2$ (on $U$).
If $\Omega$ can be extended as a $C^r$-differentiable
$2$-form to $M^2$,
then the integral 
\[
    \frac1{2\pi}\int_{M^2}\Omega
\]
gives
the Euler characteristic of the associated
coherent tangent bundle induced by $ds^2$ when $M^2$
is compact and orientable.
See \cite[Proposition 3.3]{HHNSUY}.

\begin{definition}\label{def:generic}
 A semi-definite point $p\in M^2$ 
 of a Kossowski metric $ds^2$
 is called \emph{parabolic} {\rm(}resp.\ \emph{non-parabolic}{\rm)} 
 if the Euler form $\Omega$ vanishes
 {\rm(}resp.~does not vanish{\rm)} at $p$.
\end{definition}

To prove Fact \ref{fact:front},
we prepare the following lemma:

\begin{lemma}\label{lem:0423}
 Let $f:M^2\to \R^3$ be a $C^r$-differentiable frontal
 and $p\in M^2$ a non-degenerate singular point of $f$.
 Suppose that $p$ is a non-parabolic point with
 respect to the first fundamental form $ds^2$ of $f$,
 then $f$ is a wave front at $p$.
\end{lemma}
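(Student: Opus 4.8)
\emph{Proof proposal.} The plan is to translate both the conclusion and the hypothesis into conditions on the unit normal field $\nu$ in an adjusted coordinate system, and then to compare them through a single determinantal formula for the Euler form.

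First I would choose a $C^r$-coordinate system $(u,v)$ centered at $p$ that is adjusted in the sense of Definition~\ref{def:admissible}, so that $f_v(p)=\vect{0}$; such a system exists because $\ker df_p$ coincides with the null space $\mathcal N_p$ (indeed $df_p(\vect{v})=\vect{0}$ iff $ds^2(\vect{v},\vect{v})=|df_p(\vect{v})|^2=0$), which is one-dimensional by Lemma~\ref{lem:null-add}. In these coordinates $E(p)=|f_u(p)|^2\neq 0$ (again by Lemma~\ref{lem:null-add}), so $f_u(p)\neq\vect{0}$ and $df_p$ has rank one. Since
\[
dL_p(\partial_u)=(f_u(p),\nu_u(p)),\qquad dL_p(\partial_v)=(\vect{0},\nu_v(p)),
\]
where the $\R^3$-component of $dL_p(\partial_u)$ is nonzero while that of $dL_p(\partial_v)$ vanishes, these two vectors are linearly independent if and only if $\nu_v(p)\neq\vect{0}$. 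Thus the proof reduces to showing that non-parabolicity of $p$ forces $\nu_v(p)\neq\vect{0}$.

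The key step is the identity
\[
\Omega=\det(\nu_u,\nu_v,\nu)\,du\wedge dv
\]
on $U$. To obtain it, I would first work on the regular set $U\setminus\Sigma$, where $f$ is an immersion: writing $\nu_u,\nu_v$ as combinations of $f_u,f_v$ via the Weingarten relation and expanding the determinant gives $\det(\nu_u,\nu_v,\nu)=K\det(f_u,f_v,\nu)=K\lambda$ (here $\lambda=\det(f_u,f_v,\nu)$ is the signed area density of $f$, cf.~\eqref{eq:lambda} and Fact~\ref{fact:p-back}). Hence $\Omega=K\,d\hat A=K\lambda\,du\wedge dv=\det(\nu_u,\nu_v,\nu)\,du\wedge dv$ on $U\setminus\Sigma$. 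Since $\nu$ is a globally $C^r$-differentiable unit normal field for the frontal $f$, the function $\det(\nu_u,\nu_v,\nu)$ is $C^r$ on all of $U$; as $\Omega$ extends $C^r$-differentiably across $\Sigma$, the two $C^r$-forms agree on the dense set $U\setminus\Sigma$ and hence on all of $U$ by continuity.

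Finally, that $p$ is non-parabolic means $\Omega(p)\neq 0$, i.e. $\det(\nu_u(p),\nu_v(p),\nu(p))\neq 0$. If $\nu_v(p)=\vect{0}$, the middle column of this determinant would vanish, forcing $\det(\nu_u(p),\nu_v(p),\nu(p))=0$, a contradiction; hence $\nu_v(p)\neq\vect{0}$, and by the reduction above $L=(f,\nu)$ is an immersion at $p$, so $f$ is a wave front at $p$. The only delicate point is the justification of the determinantal formula for $\Omega$ across the semi-definite set: the substance is recognizing $\det(\nu_u,\nu_v,\nu)$ as the globally defined $C^r$-coefficient that realizes the known $C^r$-extension of the Euler form, after which the conclusion is immediate.
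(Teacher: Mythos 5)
Your proof is correct, but it takes a genuinely different route from the paper's. The paper splits into two cases ($p$ of type $A_2$ or not), invokes the special coordinate systems of \cite{MSUY} in each case, and runs the argument through the invariants $\kappa_c,\kappa_\Pi,\kappa_\nu$ (resp.\ $\mu_c,\mu_\Pi$) together with the criteria of \cite[Propositions 3.11 and 4.2]{MSUY} that $\kappa_c(p)\ne 0$ (resp.\ $\mu_c(p)\ne 0$) characterizes wave fronts; non-parabolicity forces $\kappa_\Pi(p)\ne0$ (resp.\ $\mu_\Pi(p)\ne 0$), hence the relevant cuspidal curvature is nonzero. You instead avoid any case distinction by proving the single identity $\Omega=\det(\nu_u,\nu_v,\nu)\,du\wedge dv$: on the regular set this is Weingarten plus the Theorema Egregium (with $\lambda=\det(f_u,f_v,\nu)$, a legitimate choice of signed area density by Fact~\ref{fact:p-back}), and it propagates across $\Sigma$ because both sides are $C^r$ and the regular set is dense near a non-degenerate singular point ($\Sigma$ is locally a regular curve); then in adjusted coordinates non-parabolicity forces $\nu_v(p)\ne\vect{0}$, which is exactly what is needed for $L=(f,\nu)$ to be an immersion. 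The sign ambiguity of $\lambda$ and $\Omega$ is harmless since only non-vanishing matters. Your argument is more elementary and self-contained --- it essentially establishes the equivalence (1) $\Leftrightarrow$ (3) of Fact~\ref{fact:front} (non-parabolicity means the Gauss map is regular at $p$) as a by-product, rather than relying on it. What the paper's heavier route buys is that the case analysis and the invariants $\kappa_c,\kappa'_c,\kappa_\Pi,\mu_c,\mu_\Pi$ are reused immediately afterwards, in the proofs of Fact~\ref{fact:front}(2) and of Proposition~\ref{prop:Honda} (notably the cuspidal cross cap criterion), so that machinery is not introduced solely for this lemma.
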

\begin{proof}
 We let $p$ be an $A_2$ semi-definite point of $ds^2$.
 As shown in \cite[Page 261]{MSUY}, 
 we can take a local coordinate system
 $(U;u,v)$ centered at $p$ satisfying the following 
 three properties:
 \begin{enumerate}
  \item the $u$-axis coincides with the singular set,
	and $|f_u|=1$ on the $u$-axis,
  \item $f_v(u,0)=\vect{0}$ for each $u$,
  \item $\{f_u,f_{vv},\nu\}$ is an orthonormal frame
	along the $u$-axis.
 \end{enumerate} 
 Then, as shown in \cite[Pages 262--263]{MSUY},
 there is a $C^r$-function $\hat K$ on $U$
 such that 
 \begin{equation}\label{eq:vK}
  \hat K(u,v)=v K(u,v)
 \end{equation}
 on $U\setminus \{v=0\}$, where $K$ is
 the Gaussian curvature of $ds^2$.
 Let $\lambda(u,v)$ be the signed area density
 function on $U$.
 Since $\lambda(u,0)=0$, there exists a
 $C^r$-function $\hat \lambda$ such that
 $\lambda(u,v)=v \hat \lambda(u,v)$.
 Thus, the Euler form can be written as
 \[
    \Omega =K \lambda du \wedge dv
         =\hat K \hat \lambda du \wedge dv.
 \]
 The function $\hat K$ coincides with the same function
 as in \cite[Page 263]{MSUY}. 
 Since $\lambda(u,0)=0$, it holds 
 that $\lambda_u(0,0)=0$.
 By \eqref{eq:k2}, we have 
 $\hat \lambda(0,0)=\lambda_v(0,0)\ne 0$.
 In \cite{MSUY}, the cuspidal curvature $\kappa_c$
 and the product curvature $\kappa_{\Pi}$
 are defined, and we have the following
 (cf.\ \cite[(3.26)]{MSUY})
 \begin{equation}\label{eq:cn1}
  \kappa_\Pi(p)=\kappa_\nu(p) \kappa_c(p).
 \end{equation}
 Moreover, by \cite[(3.25)]{MSUY},
 $\hat K(p)\ne 0$ if and only if $\kappa_\Pi(p)\ne 0$.
 As shown in \cite[Proposition 3.11]{MSUY},
 $\kappa_c(p)\ne 0$ if and only if $f$ is a wave front.
 Since $p$ is non-parabolic point,
 $\kappa_\Pi(p)\ne 0$.
 So $f$ is a wave front at $p$.

 We next consider the case that $p$ is not an $A_2$ 
 semi-definite point.
 As shown in \cite[Page 267]{MSUY}, 
 we can take a local coordinate system
 $(U;u,v)$ centered at $p$ satisfying the following 
 three properties:
 \begin{itemize}
  \item $f_u(0,0)=\vect{0}$,
  \item the $u$-axis is the singular set, and
  \item $|f_v(0,0)|=1$.
 \end{itemize} 
 Using this coordinate system,
 the Euler form satisfies
 $\Omega =\hat K \hat \lambda du \wedge dv$,
 where 
 $\hat K(u,v)=v K(u,v)$, $\hat \lambda$ is a $C^r$-function
 satisfying $\lambda(u,v)=v \hat \lambda(u,v)$
 and $\hat \lambda(0,0)\ne 0$ (cf.\ \cite[Page 270]{MSUY}).
 Moreover, 
 \begin{equation}\label{eq:K1}
  \hat K(u,0)=\mu_\Pi(p)+O(u)
 \end{equation}
 holds (cf.\ \cite[(4.12)]{MSUY}),
 where the normalized cuspidal curvature $\mu_c(p)$
 is defined at \cite[(4.6)]{MSUY} and
 satisfies (cf.\ \cite[(4.10)]{MSUY})
 \begin{equation}\label{eq:K2}
  \mu_\Pi(p)=\kappa_\nu(p)\mu_c(p).
 \end{equation}
 As shown in \cite[Proposition 4.2]{MSUY},
 $\mu_c(p)\ne 0$ if and only if
 $f$ is a wave front at $p$.
 Since $\kappa_\nu(p)\ne 0$,
 \eqref{eq:K1} and \eqref{eq:K2}
 yield that
 $\mu_c(p)\ne 0$ if and 
 only if $\Omega(p)\ne 0$.
 So the fact that $p$ is a non-parabolic point
 implies that $f$ is a wave front at $p$.
\end{proof}
\begin{proof}[Proof of Fact \ref{fact:front}]
 We suppose (1).
 By Lemma \ref{lem:0423},
 (1) implies that $f$ is a wave front.
 We first consider the case that $p$ is an $A_2$ semi-definite
 point.
 Moreover, since $p$ is non-parabolic, as seen
 in the proof of Lemma \ref{lem:0423},
 $\kappa_\Pi(p)\ne 0$ holds. So 
 $\kappa_\Pi(p)=\kappa_\nu(p) \kappa_c(p)$
 implies $\kappa_\nu(p)\ne 0$.
 Therefore, we obtain (2).
 We next consider the case
 that $p$ is not an $A_2$ semi-definite point.
 Since $p$ is non-parabolic, we have
 $\hat K(p)\ne 0$.  
 Then \eqref{eq:K1} yields that $\mu_\Pi(p)\ne 0$.
 As seen in the proof of Lemma \ref{lem:0423},
 the fact that $f$ is a wave front at $p$
 implies $\mu_c(p)\ne 0$, and so
 $\kappa_\nu(p)\ne0$
 by \eqref{eq:K2}. Thus we obtain (2).

 We next suppose (2). Then (3) follows from
 the equivalency of (2) and (3) in \cite[Corollary C]{MSUY}.
 Finally, we suppose (3).
 Since $\nu$ is an immersion  at $p$, $f$ is a wave front at $p$.
 Then the limiting normal curvature of $f$ at $p$ does not
 vanish.
 Then \cite[Theorem A]{MSUY} implies that
 $\Omega(=Kd\hat A)$ does not vanish at $p$.
 So, $p$ is non-parabolic, that is, (1) holds.
\end{proof}

Let $p$ be an $A_2$ semi-definite point of $ds^2$
and $\sigma(t)$ the characteristic  curve 
such that $\sigma(0)=p$.
Since $p$ is of type $A_2$,
the velocity vector $\sigma'(0)$ is not a null vector,
and so we may assume that $t$
is an arc-length parameter of $\sigma$,
that is, $ds^2(\sigma'(t),\sigma'(t))$
is identically equal to $1$.
Then the $2$-form 
\begin{equation}\label{eq:DO}
 \Omega'(p):=\left.\frac{d}{dt}\Omega_{\sigma(t)}
	     \right|_{t=0}
 \in T^*_pM^2\wedge T^*_pM^2
\end{equation}
is defined, which is called the 
\emph{derived  Euler form at $p$} associated with $ds^2$.
The following assertion is an analogue of 
Fact \ref{fact:a2a3}, but we do  not assume
that $f$ is a wave front:

\begin{proposition}\label{prop:Honda}
 Let $f:M^2\to \R^3$ be a $C^r$-frontal
 and $p$ its non-degenerate singular point
 where the limiting normal curvature does not vanish.
 Then 
 \begin{enumerate}
  \item[(o)] $f$ is a wave front at $p$ if and only if $p$ 
	     is non-parabolic {\rm(}i.e.\ $\Omega(p)\ne 0${\rm)}
	     with respect to the first fundamental form
	     $ds^2$ of $f$,
  \item[(i)] $p$ is a cuspidal edge
	     if and only if it is an $A_2$ semi-definite point
	     and $\Omega(p)\ne 0$,
  \item[(ii)] $p$ is a swallowtail
	      if and only if it is an $A_3$ semi-definite point
	      and $\Omega(p)\ne 0$, 
  \item[(iii)] $p$ is a cuspidal cross cap
	       if and only if it is an $A_2$ semi-definite point,
	       $\Omega(p)=0$ and
	       $\Omega'(p)\ne 0$.
 \end{enumerate}
\end{proposition}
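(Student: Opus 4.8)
The plan is to bootstrap everything from Lemma~\ref{lem:0423}, Fact~\ref{fact:front} and Fact~\ref{fact:a2a3}, reducing assertions (o)--(ii) to the already-understood wave front case and treating (iii) separately, since cuspidal cross caps are frontals but not fronts.

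First I would prove (o). If $\Omega(p)\neq0$, then $p$ is a non-parabolic semi-definite point, so Lemma~\ref{lem:0423} gives that $f$ is a wave front at $p$. Conversely, if $f$ is a wave front at $p$, then the hypothesis $\kappa_\nu(p)\neq0$ says $p$ is non-$\nu$-flat, so the implication $(2)\Rightarrow(1)$ in Fact~\ref{fact:front} shows $p$ is non-parabolic, i.e. $\Omega(p)\neq0$. For (i) and (ii), recall that cuspidal edges and swallowtails are wave fronts; hence if $p$ is one of them, (o) yields $\Omega(p)\neq0$ and Fact~\ref{fact:a2a3} identifies the type ($A_2$ resp.\ $A_3$). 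Conversely, $\Omega(p)\neq0$ forces $f$ to be a wave front by (o), and then Fact~\ref{fact:a2a3} converts the type condition back into a cuspidal edge resp.\ swallowtail. Thus (o)--(ii) require no new computation.

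The substance is in (iii). Since a cuspidal cross cap is a frontal which is not a wave front, (o) immediately gives the necessity of $\Omega(p)=0$, so throughout (iii) I may assume $p$ is a parabolic semi-definite point. The remaining task is to characterize, among parabolic $A_2$ points with $\kappa_\nu(p)\neq0$, exactly those which are cuspidal cross caps, and to match this with $\Omega'(p)\neq0$. For this I would adopt the $A_2$-adapted coordinates $(U;u,v)$ from the proof of Lemma~\ref{lem:0423}, in which the $u$-axis is the singular set, $\Omega=\hat K\hat\lambda\,du\wedge dv$ with $\hat K=vK$ and $\hat\lambda(0,0)\neq0$, and $\kappa_\Pi=\kappa_\nu\kappa_c$ by \eqref{eq:cn1}. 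Parametrizing the characteristic curve by arc length as $\sigma(t)=(t,0)$ and using $\hat K(0,0)=0$ (which is precisely the parabolicity $\Omega(p)=0$), the definition \eqref{eq:DO} gives
\[
\Omega'(p)=\hat K_u(0,0)\,\hat\lambda(0,0)\,du\wedge dv,
\]
so that $\Omega'(p)\neq0$ is equivalent to $\hat K_u(0,0)\neq0$.

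It then remains to read off $\hat K_u(0,0)$ geometrically. Along the singular curve $\hat K(u,0)$ is a nonzero multiple of the product curvature $\kappa_\Pi(\sigma(u))=\kappa_\nu(\sigma(u))\kappa_c(\sigma(u))$; since $\kappa_c(p)=0$ (because $f$ is not a front at $p$, by the criterion recalled in the proof of Lemma~\ref{lem:0423}) while $\kappa_\nu(p)\neq0$, differentiating this product at $u=0$ shows that $\hat K_u(0,0)\neq0$ is equivalent to $(d/dt)\kappa_c(\sigma(t))|_{t=0}\neq0$, i.e.\ the cuspidal curvature has a simple zero along the singular curve. I would then invoke the known cuspidal cross cap criterion: an $A_2$-type frontal germ with $\kappa_\nu(p)\neq0$ which fails to be a front at $p$ has a cuspidal cross cap at $p$ if and only if its cuspidal curvature vanishes to first order along the singular curve; combined with the fact that a cuspidal cross cap is automatically of $A_2$-type (first kind), this closes both directions of (iii). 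The main obstacle is exactly this last matching step: one must pin down the precise normalization relating $\hat K$ to $\kappa_\Pi$ along $\sigma$ and verify that the simple-zero condition on $\kappa_c$ is the exact hypothesis of the cuspidal cross cap criterion, so that $\Omega'(p)\neq0$ becomes equivalent to the cuspidal cross cap condition with no competing singularity type intervening.
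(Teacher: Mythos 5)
Your proposal is correct and follows essentially the same route as the paper: (o)--(ii) are reduced to Fact~\ref{fact:front} and Fact~\ref{fact:a2a3}, and (iii) is established by the identical chain of equivalences
$\Omega(p)=0,\ \Omega'(p)\ne0 \Leftrightarrow \hat K(0,0)=0,\ \hat K_u(0,0)\ne 0 \Leftrightarrow \kappa_\Pi(p)=0,\ \kappa_\Pi'(p)\ne0 \Leftrightarrow \kappa_c(0)=0,\ \kappa_c'(0)\ne0$,
matched in the end against the cuspidal cross cap criterion of \cite{MSUY} (which the paper invokes via the identity $\psi_{ccr}=\kappa_c$). Your explicit product-rule computation of $\Omega'(p)$ in the adapted coordinates and your remark that a cuspidal cross cap is automatically of type $A_2$ merely spell out steps the paper leaves implicit.
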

\begin{proof}
 We use the same notations as in the proof of
 Lemma \ref{lem:0423}.
 The first assertion (o) follows from the
 equivalency of (1) and (2) of Fact~\ref{fact:front}.
 The assertions (i) and (ii) immediately follow from
 Fact \ref{fact:front} and Fact \ref{fact:a2a3}.
 We next prove (iii).
 Take an $A_2$ semi-definite point $p$.
 The conditions $\Omega(p)=0$ and $\Omega'(p)\ne 0$
 are equivalent to
 the conditions 
 \begin{equation}\label{eq:Ku}
  \hat K(0,0)=0 \mbox{ and } \hat K_u(0,0)\ne 0.
 \end{equation}
 Moreover, by \cite[(3.25)]{MSUY}, 
 \eqref{eq:Ku} is reduced to
 \begin{equation}\label{eq:Pi}
   \kappa_\Pi(p)=0  \mbox{ and } \kappa'_\Pi(p)\ne 0,
 \end{equation}
 where $\kappa_\Pi$ (resp.\ $\kappa'_\Pi$)
 is the product curvature 
 (resp.\ the derivative product curvature) for
 $A_2$ semi-definite points defined in \cite{MSUY}.
 In \cite{MSUY}, 
 the derivative cuspidal curvature $\kappa'_c$
 is also defined, and we have the following
 identity (cf.\ \cite[(3.26)]{MSUY})
 \begin{equation}\label{eq:cn2}
  \kappa'_\Pi(p)=\kappa'_\nu(p) \kappa_c(p)
   +\kappa_\nu(p) \kappa'_c(p).
 \end{equation}
 We let $\sigma(t)$ be a 
 characteristic curve such that $\sigma(0)=p$
 and denote by $\kappa_\nu(t)$
 and $\kappa_c(t)$ the limiting normal
 curvature and the cuspidal curvature
 at $\sigma(t)$, respectively.
 Since $f$ has non-vanishing 
 limiting normal curvature,
 $\kappa_\nu(0)\ne 0$ holds.
 By \eqref{eq:cn1} and \eqref{eq:cn2},
 the condition \eqref{eq:Pi}
 is equivalent to the conditions 
 \begin{equation}\label{eq:P2}
  \kappa_c(0)=0\,\,\mbox{ and }\,\,
   \kappa'_c(0)\ne 0.
 \end{equation}
 On the other hand, the function $\psi_{ccr}(t)$
 defined in \cite[Fact 2.4 (3)]{MSUY}
 satisfies the identity
 $\psi_{ccr}(t)=\kappa_c(t)$,
 as shown in the proof of
 \cite[Proposition 3.11]{MSUY}.
 Since $\psi_{ccr}(0)=\kappa_c(0)$ and $\psi'_{ccr}(0)=\kappa'_c(0)$,
 \eqref{eq:P2} is equivalent to the criterion for
 cuspidal cross caps 
 given in
 \cite[Fact 2.4 (3)]{MSUY}. So we obtain (iii).
\end{proof}

\begin{remark}
 The assertion {\rm (iii)} of Proposition \ref{prop:Honda}
 may not hold if we neglect the assumption
 that the limiting normal curvature of $f$
 does not vanish.
 More precisely, there exists a map germ
 at a cuspidal edge singular point $p$
 satisfying $\Omega(p)=0$ and $\Omega'(p)\ne 0$:
 As shown in \cite{MS}, any germs of cuspidal edges
 are congruent to
 \begin{equation}\label{eq:MS}
  f(u,v)=\biggl(u,a_0(u)+v^2,b_0(u)u^2+b_2(u)uv^2+b_3(u,v)v^3\biggr),
 \end{equation}
 where $b_3(0,0)\ne 0$.
 In this normal form,
 we set
 \[
    a_0(u)=b_2(u)=0, 
         \quad b_3(u,v)=\frac16,
         \quad b_0(u)=\frac{u}2. 
 \]
 Then we obtain a wave front 
 $f(u,v)=\left(u,v^2,{u^3}/2+{v^3}/6\right)$
 having cuspidal edge singularity at $(0,0)$
 such that
 \[
    \kappa_c(0)=1,\quad \kappa'_c(0)=0,\quad
      \kappa_\nu(0)=0,\quad \kappa'_\nu(0)=3.
 \]
 The product curvature $\kappa_\Pi(u)$ 
 and the derivative product curvature
 $\kappa'_\Pi(u)$ satisfy
 $\kappa_\Pi(0)=0$ and $\kappa'_\Pi(0)=3$,
 which yield
 $\Omega(0,0)=0$, $\Omega'(0,0)\ne 0$,
 as seen in the proof 
 of Proposition \ref{prop:Honda}.
\end{remark}

\begin{corollary}\label{cor:CRR}
 Let $ds^2$ be a
 $C^r$-differentiable Kossowski 
 metric $ds^2$, and
 let $p\in M^2$ be an $A_2$ semi-definite 
 point of $ds^2$ satisfying $\Omega(p)=0$
 and $\Omega'(p)\ne 0$. 
 Let  $(U;u,v)$ be a local 
 $C^r$-coordinate system
 centered at $p$ satisfying the properties {\rm (1)--(3)} 
 in the proof of 
 Lemma \ref{lem:0423}.
 Then there exist positive constants
 $\epsilon,\,\, \delta$  such that the sign of the
 Gaussian curvature function $K(u,v)$
 satisfies
 \[
    \sign K(u,v)=\sign(uv \hat K_u(0,0)) 
         \qquad ((u,v)\in C_{\epsilon,\delta}),
 \]
 where $\hat K$ is the function
 defined in \eqref{eq:vK} and
 \[
     C_{\epsilon,\delta}:=
         \{(u,v)\in U\,;\,0<|v|<\delta |u|,\,\,
                |u|<\epsilon
         \}.
 \]
\end{corollary}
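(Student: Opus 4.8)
The plan is to reduce everything to the single function $\hat K(u,v)=vK(u,v)$ from \eqref{eq:vK} together with a first-order Taylor expansion at the origin. First I would record the two scalar facts that drive the argument. By the translation already carried out in the proof of Proposition \ref{prop:Honda} (see \eqref{eq:Ku}), the hypotheses $\Omega(p)=0$ and $\Omega'(p)\ne 0$ are equivalent, in the coordinates $(U;u,v)$ fixed in the statement, to $\hat K(0,0)=0$ and $\hat K_u(0,0)\ne 0$. Moreover, since $\hat K=vK$ away from the $u$-axis, for $v\ne 0$ we have $K=\hat K/v$, and therefore
\begin{equation*}
\op{sign}K(u,v)=\op{sign}\bigl(v\,\hat K(u,v)\bigr)\qquad(v\ne 0),
\end{equation*}
because $\op{sign}(a/b)=\op{sign}(ab)$ whenever $b\ne 0$. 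Thus it suffices to control the sign of $v\,\hat K(u,v)$ on the cone $C_{\epsilon,\delta}$.

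Next I would Taylor expand. As $\hat K$ is $C^r$ and $\hat K(0,0)=0$, Taylor's theorem gives, on a neighborhood of the origin,
\begin{equation*}
\hat K(u,v)=\hat K_u(0,0)\,u+\hat K_v(0,0)\,v+R(u,v),\qquad |R(u,v)|\le C\,(u^2+v^2)
\end{equation*}
for a suitable constant $C>0$. Multiplying by $v$ yields
\begin{equation*}
v\,\hat K(u,v)=\hat K_u(0,0)\,uv+\hat K_v(0,0)\,v^2+v\,R(u,v),
\end{equation*}
and the whole point is that the first term dominates the other two on $C_{\epsilon,\delta}$. Indeed, on $C_{\epsilon,\delta}$ one has $0<|v|<\delta|u|<\delta\epsilon$, so
\begin{equation*}
|\hat K_v(0,0)\,v^2|\le |\hat K_v(0,0)|\,\delta\,|uv|,\qquad
|v\,R(u,v)|\le C(1+\delta^2)\,|u|\,|uv|\le C(1+\delta^2)\,\epsilon\,|uv|.
\end{equation*}

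The main (and essentially only) obstacle is the choice of the two constants, which is routine: since $\hat K_u(0,0)\ne 0$, I would first pick $\delta>0$ so small that $|\hat K_v(0,0)|\,\delta<\tfrac12|\hat K_u(0,0)|$, and then pick $\epsilon>0$ so small that $C(1+\delta^2)\,\epsilon<\tfrac12|\hat K_u(0,0)|$. With these choices the sum of the two remainder terms has absolute value strictly less than $|\hat K_u(0,0)|\,|uv|=|\hat K_u(0,0)\,uv|$ at every point of $C_{\epsilon,\delta}$ (note that $uv\ne 0$ there, since $0<|v|<\delta|u|$ forces both $u\ne 0$ and $v\ne 0$). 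Hence $v\,\hat K(u,v)$ is nonzero and has the same sign as its dominant term $\hat K_u(0,0)\,uv=uv\,\hat K_u(0,0)$. Combining this with the sign identity from the first paragraph gives $\op{sign}K(u,v)=\op{sign}\bigl(uv\,\hat K_u(0,0)\bigr)$ on $C_{\epsilon,\delta}$, as claimed.
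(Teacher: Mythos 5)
Your proof is correct and follows essentially the same route as the paper's: both reduce the hypotheses $\Omega(p)=0$, $\Omega'(p)\ne 0$ to the conditions \eqref{eq:Ku} on $\hat K$, and then show that the term generated by $\hat K_u(0,0)\,u$ dominates all remaining terms on the cone $C_{\epsilon,\delta}$. The only difference is bookkeeping: the paper uses smooth division to write $K=u\psi(u)/v+\phi(u,v)$ and compares the unbounded term against the bounded one, whereas you clear the denominator and run a first-order Taylor expansion of $\hat K$ with a quadratic remainder bound --- the resulting dominance estimate and sign conclusion are the same.
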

\begin{proof}
 We can write 
 \[
    \hat K(u,v)=v K(u,v),\quad
     \hat K(u,v)=\hat K(u,0)+v \phi(u,v),
 \]
 where $\phi(u,v)$ is a $C^r$-function at $p$.
 Moreover, by \eqref{eq:Ku}, we can write
 \[
    \hat K(u,0)=u \psi(u) \qquad (\psi(0)=\hat K_u(0,0)\ne 0),
 \]
 where $\psi(u)$ is a $C^r$-function 
 defined for sufficiently small $|u|$. 
 Without loss of generality, we may assume that
 $\phi(u,v)$ and $\psi(u)$ are defined on 
 a domain 
 \[
    W:=\{(u,v)\,;\, \mbox{$|u|<\epsilon$ and $|v|<\epsilon$}\},
 \]
 where $\epsilon$ is a sufficiently small
 positive number.
 Choosing $W$ so that $\overline{W}\subset U$, 
 we have the expression
 $K(u,v)={u\psi(u)}/{v}+\phi(u,v)$
 on $W\setminus \{v\ne 0\}$.
 Since $\psi(0)\ne 0$ and $\epsilon$ can be
 taken to be arbitrarily small,
 we may assume 
 $|\psi(u)|>m\,(>0)$ and $|\phi(u,v)|<\Delta$ hold on 
 $\overline{W}$ for some constants $m,\Delta$.
 We set
 $\delta:=m/\Delta$.
 If $|u/v|>1/\delta$, we have
 \[
     \left|\frac{u\psi(u)}{v}\right|
           >\frac{m}{\delta}=\Delta>|\phi(u,v)|
 \]
 on $C_{\epsilon,\delta}$.
 So the sign of $K(u,v)$ on $C_{\epsilon,\delta}$
 is equal to that of
 ${u\psi(u)}/{v}$.
\end{proof}

\section{Properties of Kossowski metrics}
\label{sec2}

In this section, we show the existence of a certain
orthogonal coordinate system, which will be applied
to prove Theorems A and B. Using this, we also give a method to
construct Kossowski metrics having
$A_2$ semi-definite points and $A_3$ semi-definite points.

\subsection{K-orthogonal coordinates}
\begin{definition}\label{def:Ev}
 Let $ds^2$ be a $C^r$-differentiable
 Kossowski metric on $M^2$, and take a point $p$ on $M^2$.
 (We also consider the case that $p$ is a regular point.)
 A local coordinate neighborhood 
 $(U;u,v)$ centered at $p$ is called
 a \emph{K-orthogonal coordinate system} if
 \begin{enumerate}
  \item $E=1$ holds along the $u$-axis, 
  \item $F=0$ on $U$, and
  \item $E_v=0$ holds along the characteristic curve $\sigma$
	(when $p$ is a semi-definite point),
 \end{enumerate}
 where we set
 $ds^2=E\,du^2+2F\,du\,dv+G\,dv^2$
 and (3) corresponds to the second condition of \eqref{eq:admissible}.
 In this situation, if we set 
 $\rho:=\sqrt{E}$,
 then the metric $ds^2$ has the following expression
 \begin{equation}\label{cor:normal}
  ds^2=(\rho du)^2+\left(\frac{\lambda dv}{\rho}\right)^2,\qquad
  \rho>0,\qquad
  \rho(u,0)=1,
 \end{equation}
 where $\lambda$ is the signed area density function on $U$. 
\end{definition}

Since $F=G=0$ at a semi-definite point $p$, 
the following assertion trivially holds:
\begin{proposition}\label{prop:null}
 Let $\sigma(t)$ be a
 characteristic curve passing through 
 a semi-definite point $p$ of $ds^2$ and   
 $(u,v)$ a {K}-orthogonal coordinate system
 centered at $p$.
 Then $\partial_v$ belongs to $\N_{\sigma(t)}$
 for each $t$.
 In particular, $\partial_v$ gives a null 
 vector field along $\sigma$.
\end{proposition}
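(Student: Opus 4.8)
The plan is to read everything off the normal form \eqref{cor:normal} that a K-orthogonal coordinate system supplies. Recall that the radical $\mathcal N_q$ at a point $q$ consists of those $\vect{w}\in T_qM^2$ with $ds^2(\vect{w},\cdot)\equiv 0$, and that for a positive semi-definite metric this coincides with the isotropic cone: a vector with $ds^2(\vect{w},\vect{w})=0$ automatically lies in $\mathcal N_q$, as noted just after Definition~\ref{def:null}. Setting $q=\sigma(t)$ and expanding an arbitrary $\vect{w}=a\,\partial_u+b\,\partial_v\in T_{\sigma(t)}M^2$, I would compute
\[
ds^2(\partial_v,\vect{w})=a\,ds^2(\partial_v,\partial_u)+b\,ds^2(\partial_v,\partial_v)=a\,F+b\,G,
\]
so that the whole claim reduces to verifying that $F$ and $G$ both vanish along the characteristic curve $\sigma$.

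The first of these is immediate: by condition~(2) in the definition of a K-orthogonal coordinate system, $F\equiv 0$ on all of $U$. For the second, I would invoke the expression $G=(\lambda/\rho)^2$ coming from \eqref{cor:normal}, together with the defining property of the characteristic curve, namely that $\sigma$ parametrizes the zero set of the area density function, so that $\lambda(\sigma(t))=0$ for every $t$. Since $\rho>0$, this forces $G(\sigma(t))=\lambda(\sigma(t))^2/\rho(\sigma(t))^2=0$. Substituting $F=G=0$ into the displayed identity gives $ds^2(\partial_v,\vect{w})=0$ for all $\vect{w}$, i.e.\ $\partial_v\in\mathcal N_{\sigma(t)}$. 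The final \lq\lq in particular\rq\rq\ clause then follows because the coordinate vector field $\partial_v$ is $C^r$ and nowhere zero, so it is a genuine null vector field along $\sigma$.

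There is essentially no obstacle here---this is why the statement is flagged as trivial---but the one step worth stating cleanly is the vanishing of $G$ along $\sigma$, and it is reassuring that this can be obtained without appeal to the normal form as well: at $q=\sigma(t)$ one has $\lambda(q)=0$, hence $EG-F^2=\lambda^2=0$ by \eqref{eq:k1}; combining this with $F=0$ yields $EG=0$, and since $E=\rho^2>0$ (equivalently, $\partial_u$ is not a null vector, exactly as in the proof of Lemma~\ref{lem:null-add}) we again conclude $G=0$. Either route makes the argument a one-line consequence of the K-orthogonality conditions.
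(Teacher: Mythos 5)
Your proof is correct and follows the same route as the paper, which dispatches this proposition with the single remark that $F=G=0$ holds at semi-definite points in a K-orthogonal coordinate system (condition (2) of Definition~\ref{def:Ev} gives $F\equiv 0$, and $\lambda=0$ along $\sigma$ forces $G=\lambda^2/\rho^2=0$ there). Your write-up simply makes this explicit, and your alternative derivation of $G=0$ via \eqref{eq:k1} and $E>0$ is a harmless variant of the same observation.
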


We shall apply the following lemma 
given in Kossowski \cite{Kossowski}
to prove our main theorem:

\begin{lemma}\label{lem:adap_ortho}
 Let $ds^2$ be a $C^r$-differentiable Kossowski metric on $M^2$, 
 and take a point $p$ on $M^2$.
 Let $\gamma$ be a $C^r$-regular curve passing through $p\,\,(=\gamma(0))$
 such that $\gamma'(0)$ is not a null vector of 
 $ds^2$ on $M^2$
 {\rm(}when $p$ is a semi-definite point{\rm)}.
 Then there exists a $C^r$-local coordinate 
 neighborhood $(U;u,v)$
 satisfying the following properties:
 \begin{enumerate}
  \item the $u$-axis corresponds to the curve $\gamma$,
  \item the $u$-curves are orthogonal to the $v$-curves 
	with respect to $ds^2$,
  \item $\partial_v$ points in the null direction
	at each semi-definite point on $U$,
  \item if $ds^2$ and $\gamma$ are real analytic, 
	then so is $(u,v)$.
 \end{enumerate}
\end{lemma}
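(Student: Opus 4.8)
The plan is to build the coordinates from the $ds^2$-orthogonal complement line field of a fixed non-null direction, exploiting the fact (Lemma~\ref{lem:null-add}) that at a semi-definite point the radical $\mathcal{N}_q$ is exactly one-dimensional. First I would fix a $C^r$-local coordinate system $(x,y)$ centered at $p$ whose $x$-axis is the regular curve $\gamma$, and write $ds^2=\tilde E\,dx^2+2\tilde F\,dx\,dy+\tilde G\,dy^2$. Since $\gamma'(0)=\partial_x$ is not a null vector, $\tilde E=\inner{\partial_x}{\partial_x}$ is positive at $p$, hence, by continuity, on a possibly smaller neighborhood $U$; thus $\partial_x$ is non-null throughout $U$. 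On $U$ I set $X:=\tilde F\,\partial_x-\tilde E\,\partial_y$, a nowhere-zero $C^r$-vector field (its $\partial_y$-component $-\tilde E$ never vanishes) satisfying $\inner{\partial_x}{X}=\tilde E\tilde F-\tilde F\tilde E=0$. Hence $X$ spans the $ds^2$-orthogonal complement of $\partial_x$ at every point of $U$, and is transverse to $\gamma$.

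The crucial point is that this single field $X$ simultaneously realizes the orthogonality condition (2) and the null condition (3). At a semi-definite point $q\in U$ the linear functional $\inner{\partial_x}{\cdot}$ is nonzero (as $\inner{\partial_x}{\partial_x}=\tilde E>0$), so its kernel is one-dimensional; since $\mathcal{N}_q$ is contained in this kernel and is itself one-dimensional by Lemma~\ref{lem:null-add}, the two coincide. As $X(q)$ lies in this kernel and is nonzero, $X(q)$ spans $\mathcal{N}_q$, that is, $X$ points in the null direction at every semi-definite point of $U$.

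Next I would produce the coordinate change so that, crucially, $\partial_u$ stays parallel to $\partial_x$; this is precisely what a naive flow-box for $X$ fails to guarantee, and is where orthogonality could otherwise be lost away from $\gamma$. I set $v:=y$ and take $u$ to be a first integral of $X$ normalized by $u(x,0)=x$, i.e.\ $u$ solves the linear first-order equation $\tilde F\,u_x-\tilde E\,u_y=0$ with this Cauchy data on $\{y=0\}$. Since $X$ is transverse to the initial line (because $\tilde E\neq0$), the method of characteristics yields a $C^r$-solution near the axis with $u_x(x,0)=1$, so $(x,y)\mapsto(u,v)$ is a $C^r$-diffeomorphism onto a neighborhood of $p$. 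In these coordinates the $u$-axis $\{v=0\}=\{y=0\}$ is $\gamma$, giving (1); one computes $\partial_u=x_u\,\partial_x$, so $\partial_u\parallel\partial_x$, while $\partial_v$ is tangent to the level curves $\{u=\mathrm{const}\}$, which are integral curves of $X$, so $\partial_v\parallel X$. Therefore $\inner{\partial_u}{\partial_v}$ is a scalar multiple of $\inner{\partial_x}{X}=0$, proving (2), and $\partial_v$ inherits the null property of $X$ at semi-definite points, proving (3).

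Finally, for (4): if $ds^2$ and $\gamma$ are real analytic, then $\tilde E,\tilde F$ and hence $X$ are real analytic, and the characteristic system defining the first integral $u$ has analytic coefficients and analytic initial data, so by the analytic dependence of ODE solutions on initial conditions the function $u$, and therefore $(u,v)$, is real analytic. The only steps requiring genuine care are the non-vanishing and transversality of $X$ on all of $U$ (which rests on keeping $\tilde E>0$) and the compatibility of orthogonality with the null condition at semi-definite points established in the second paragraph; the coordinate non-degeneracy and the analyticity are then routine consequences of the method of characteristics.
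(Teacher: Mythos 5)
Your proof is correct, and while it shares the paper's key algebraic ingredient, it organizes the argument in a genuinely different way. Both proofs hinge on the field orthogonal to $\partial_x$: your $X=\tilde F\,\partial_x-\tilde E\,\partial_y$ is, up to sign, the paper's $Y_2$. The paper, however, first extends the null vector field along the characteristic curve $\sigma$ to a field $\tilde\eta$ on a tubular neighborhood and straightens the pair consisting of the tangent field to $\gamma$ and $\tilde\eta$ via \cite[Lemma B.5.4]{UY-book}, so that $\partial_y$ is proportional to the null direction along $\sigma$; it then orthogonalizes and applies the straightening lemma a second time, verifying condition (3) by noting that $\tilde F=\inner{\partial_x}{\partial_y}$ vanishes along $\sigma$, whence $Y_2\parallel\partial_y$ there. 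You skip the preliminary adaptation entirely: your observation that at a semi-definite point $q$ the kernel of the functional $\inner{\partial_x}{\cdot}$ is one-dimensional and contains the one-dimensional radical $\mathcal N_q$ (Lemma \ref{lem:null-add}), hence coincides with it, makes (3) an automatic consequence of (2). This is a real simplification: it eliminates one of the two coordinate changes, and it yields (3) at every semi-definite point of $U$ directly, whereas the paper obtains nullity along $\sigma$ and implicitly uses that, after shrinking $U$, the semi-definite set is exactly $\sigma$. Your replacement of the cited straightening lemma by an explicit first integral of $X$ (solving $\tilde F u_x-\tilde E u_y=0$ with Cauchy data $u(x,0)=x$ and $v:=y$, so that $\partial_u\parallel\partial_x$ and $\partial_v\parallel X$) is a correct hands-on substitute, and it handles the analytic case (4) through analytic dependence of the characteristic flow, exactly what the paper delegates to its cited lemma. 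The only step you leave tacit is that, in the analytic case, the initial coordinates $(x,y)$ adapted to $\gamma$ must themselves be chosen real analytic; this is routine but worth a sentence.
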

\begin{proof}
 When $p$ is a regular point, we take
 a $C^r$-differentiable vector field $X_2$ on $U$ such that
 $X_2$ has no zeros on $U$.
 On the other hand, if $p$ 
 is a semi-definite point, we define $X_2$ as follows:
 Let $\sigma$ be the characteristic  curve
 passing through $p$. 
 We take a null vector field $\eta$
 along $\sigma$. We then extend $\eta$ 
 as a $C^r$-differentiable vector field $\tilde \eta$ defined
 on a local coordinate neighborhood $(U;u,v)$ 
 by replacing $U$ with a tubular neighborhood of 
 $\sigma$ in the $uv$-plane.
 We set $X_2:=\tilde \eta$.
 Take a $C^r$-differentiable vector 
 field $X_1$ on $U$ so that
 the curve $\gamma$ is an integral curve
 of $X_1$.
 Since $\gamma'(0)$ is not a null vector,
 we may assume  that the vector fields in the pair
 $(X_1,X_2)$ are linearly independent at each point
 on $U$. By \cite[Lemma B.5.4]{UY-book},
 there exists a $C^r$-differentiable 
 local coordinate system $(x,y)$
 centered at $p$ such that
 $\partial_x,\partial_y$ are 
 proportional to $X_1,X_2$, respectively, and
 the $x$-axis parametrizes $\gamma$.
 We next set
 \[
   Y_1:=\partial_x,\qquad
   Y_2:=-\tilde{F}\,\partial_x+\tilde{E}\,\partial_y,
 \]
 where 
 $ds^2=\tilde E\,dx^2
    +2 \tilde F\,dx\,dy+\tilde G\,dy^2$.
 Then $Y_1$, $Y_2$ are $C^r$-differentiable vector
 fields without zeros
 satisfying $ds^2(Y_1,Y_2)=0$.
 By \cite[Lemma B.5.4]{UY-book} again,
 there exists a new $C^r$-differentiable local coordinate 
 system $(\tilde u,\tilde v)$
 centered at $p$ such that
 $\partial_{\tilde u},\partial_{\tilde v}$ are 
 proportional to $Y_1,Y_2$, respectively, and
 the $\tilde u$-axis parametrizes $\gamma$.
 (In fact, by the proof of \cite[Lemma B.5.4]{UY-book},
 one can check that $(\tilde u,\tilde v)$ is real analytic 
 whenever $(x,y)$ is.)
 Since $\tilde u$-axis corresponds to the $x$-axis
 and $Y_2$ is proportional to $\partial_y$ on the
 characteristic curve $\sigma$, we can conclude that
 $\partial_{\tilde v}$ gives a null vector field
 along $\sigma$.
 Hence, the coordinates $(\tilde u,\tilde v)$ are
 the desired ones.
\end{proof}

\begin{lemma}
\label{lem:adap_ortho2}
 Let $ds^2$ be a $C^r$-differentiable
 Kossowski metric on $M^2$,
 and let $(U;u,v)$ be a $C^r$-local coordinate system such that
 \begin{equation}\label{eq:K000}
  ds^2=Edu^2+(\lambda^2/E) \, dv^2
 \end{equation}
 and $E>0$ on $U$, 
 where $\lambda$ is the signed area density function.
 Then the new $C^r$-local coordinate system $(\tilde u,\tilde v)$
 defined by
 \begin{equation}\label{eq:newcoord}
  \tilde u:=\int_0^u \sqrt{E(t,0)}dt,\qquad 
   \tilde v:=v
 \end{equation}
 gives a K-orthogonal coordinate system on $U$.
\end{lemma}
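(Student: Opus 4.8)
The plan is to carry out the change of variables explicitly and then verify the three defining conditions (1)--(3) of Definition \ref{def:Ev} one at a time. First I would note that the integrand $\sqrt{E(t,0)}$ depends on $t$ only, so $\tilde u$ is a function of $u$ alone with $d\tilde u/du=\sqrt{E(u,0)}>0$; hence $(u,v)\mapsto(\tilde u,\tilde v)$ is a $C^r$-diffeomorphism centered at $p=(0,0)$ (and is real analytic when $E$ is), so $(\tilde u,\tilde v)$ is a legitimate local coordinate system on $U$.

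Next I would rewrite the metric. Since $\tilde u$ is independent of $v$, we have $d\tilde u=\sqrt{E(u,0)}\,du$ and $d\tilde v=dv$, so substituting $du=d\tilde u/\sqrt{E(u,0)}$ into \eqref{eq:K000} gives
$$ds^2=\frac{E}{E(u,0)}\,d\tilde u^2+\frac{\lambda^2}{E}\,d\tilde v^2.$$
Thus the new coefficients are $\tilde E=E/E(u,0)$, $\tilde F=0$, and $\tilde G=\lambda^2/E$. Condition (2) holds trivially, and setting $v=0$ yields $\tilde E(\tilde u,0)=E(u,0)/E(u,0)=1$, which is condition (1).

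The remaining, and principal, step is condition (3). Because $\tilde u$ and $E(u,0)$ do not involve $\tilde v$, differentiation gives $\tilde E_{\tilde v}=E_v/E(u,0)$, so it suffices to show $E_v=0$ along the characteristic curve $\sigma$ in the original coordinates. The key observation is that $(u,v)$ is adjusted and of rank one at every semi-definite point: at such a point $\lambda=0$, hence $G=\lambda^2/E=0$, and with $F\equiv0$ this forces $ds^2(\partial_v,\partial_v)=G=0$, so $\partial_v$ is null there (and the point is rank one by Lemma \ref{lem:null-add}). Since every semi-definite point of a Kossowski metric is admissible by definition, the computation in the proof of Proposition \ref{prop:property} applies at each point of $\sigma$ and yields $E_v=2F_u$; but $F\equiv0$ forces $F_u\equiv0$, so $E_v=0$ all along $\sigma$, whence $\tilde E_{\tilde v}=0$ there. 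When $p$ is a regular point, condition (3) is vacuous and the verification of (1)--(2) completes the argument. I expect the only delicate point to be exactly this uniform use of admissibility along the whole characteristic curve, rather than at a single point, which is justified precisely because $F\equiv0$ and $G=\lambda^2/E$ keep the adjusted rank-one hypotheses valid at every point of $\sigma$.
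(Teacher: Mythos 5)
Your proof is correct and follows essentially the same route as the paper: the substitution $du=d\tilde u/\sqrt{E(u,0)}$ rewrites the metric in the form \eqref{cor:normal} with new coefficient $\tilde E=E/E(u,0)$, which is exactly the computation recorded in the paper's proof. The only difference is that you explicitly verify condition (3) of Definition \ref{def:Ev} via admissibility (Proposition \ref{prop:property} together with Lemma \ref{lem:null-add}), a step the paper leaves implicit, since for a Kossowski metric in orthogonal coordinates the identity $E_v=2F_u=0$ on the semi-definite set is regarded as automatic (cf.\ the remark following \eqref{eq:chK}).
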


\begin{proof}
 We can write
 \[
    ds^2=\rho(u,v)^2(\sqrt{E(u,0)}du)^2+\tilde \lambda(u,v)^2 
            \rho(u,v)^{-2}dv^2,   
 \]
 where
 \[
    \tilde \lambda(u,v):=\frac{\lambda(u,v)}{\sqrt{E(u,0)}},\qquad
         \rho(u,v):=\sqrt{\frac{E(u,v)}{E(u,0)}}.
 \]
 By giving the new coordinate system 
 $(\tilde u,\tilde v)$ as in
 \eqref{eq:newcoord},
 and replacing the notation $(\tilde u,\tilde v)$ 
 and $\tilde \lambda$
 by the original $(u,v)$ and $\lambda$, 
 we obtain the expression
 \eqref{cor:normal}.
\end{proof}

\begin{figure}[bht]
 \begin{center}
  \begin{tabular}{c@{\hspace{2em}}c}
{\unitlength 0.1in%
\begin{picture}(16.0000,16.0000)(4.0000,-20.0000)%
\put(11.6000,-12.4000){\makebox(0,0)[rt]{O}}%
\put(11.3000,-4.0000){\makebox(0,0)[rt]{$v$}}%
\put(21.0000,-12.4000){\makebox(0,0)[rt]{$u$}}%
%
\special{pn 8}%
\special{pa 1200 2000}%
\special{pa 1200 400}%
\special{fp}%
\special{sh 1}%
\special{pa 1200 400}%
\special{pa 1180 467}%
\special{pa 1200 453}%
\special{pa 1220 467}%
\special{pa 1200 400}%
\special{fp}%
%
\special{pn 8}%
\special{pa 400 1200}%
\special{pa 2000 1200}%
\special{fp}%
\special{sh 1}%
\special{pa 2000 1200}%
\special{pa 1933 1180}%
\special{pa 1947 1200}%
\special{pa 1933 1220}%
\special{pa 2000 1200}%
\special{fp}%
\special{pn 4}%
\special{pa 400 1200}%
\special{pa 2000 1200}%
\special{fp}%
\put(16.5000,-8.5000){\makebox(0,0){$\Sigma$}}%
%
\special{pn 35}%
\special{pa 400 1200}%
\special{pa 1920 1200}%
\special{fp}%
%
\special{pn 8}%
\special{pa 1350 1140}%
\special{pa 1350 1150}%
\special{fp}%
\special{sh 1}%
\special{pa 1350 1150}%
\special{pa 1370 1083}%
\special{pa 1350 1097}%
\special{pa 1330 1083}%
\special{pa 1350 1150}%
\special{fp}%
%
\special{pn 8}%
\special{ar 1550 1150 200 290 3.1415927 4.7123890}%
\end{picture}}
{\unitlength 0.1in%
\begin{picture}(16.0000,16.0000)(4.0000,-20.0000)%
\put(11.6000,-12.4000){\makebox(0,0)[rt]{O}}%
\put(11.3000,-4.0000){\makebox(0,0)[rt]{$v$}}%
\put(21.0000,-12.4000){\makebox(0,0)[rt]{$u$}}%
%
\special{pn 8}%
\special{pa 1200 2000}%
\special{pa 1200 400}%
\special{fp}%
\special{sh 1}%
\special{pa 1200 400}%
\special{pa 1180 467}%
\special{pa 1200 453}%
\special{pa 1220 467}%
\special{pa 1200 400}%
\special{fp}%
%
\special{pn 8}%
\special{pa 400 1200}%
\special{pa 2000 1200}%
\special{fp}%
\special{sh 1}%
\special{pa 2000 1200}%
\special{pa 1933 1180}%
\special{pa 1947 1200}%
\special{pa 1933 1220}%
\special{pa 2000 1200}%
\special{fp}%
\special{pn 25}%
\special{pa 1520 400}%
\special{pa 1496 430}%
\special{pa 1493 435}%
\special{pa 1481 450}%
\special{pa 1478 455}%
\special{pa 1466 470}%
\special{pa 1463 475}%
\special{pa 1459 480}%
\special{pa 1456 485}%
\special{pa 1452 490}%
\special{pa 1449 495}%
\special{pa 1445 500}%
\special{pa 1442 505}%
\special{pa 1438 510}%
\special{pa 1435 515}%
\special{pa 1431 520}%
\special{pa 1428 525}%
\special{pa 1424 530}%
\special{pa 1415 545}%
\special{pa 1411 550}%
\special{pa 1402 565}%
\special{pa 1398 570}%
\special{pa 1362 630}%
\special{pa 1360 635}%
\special{pa 1351 650}%
\special{pa 1349 655}%
\special{pa 1340 670}%
\special{pa 1338 675}%
\special{pa 1335 680}%
\special{pa 1333 685}%
\special{pa 1330 690}%
\special{pa 1328 695}%
\special{pa 1325 700}%
\special{pa 1323 705}%
\special{pa 1320 710}%
\special{pa 1318 715}%
\special{pa 1315 720}%
\special{pa 1313 725}%
\special{pa 1310 730}%
\special{pa 1304 745}%
\special{pa 1301 750}%
\special{pa 1295 765}%
\special{pa 1292 770}%
\special{pa 1268 830}%
\special{pa 1267 835}%
\special{pa 1261 850}%
\special{pa 1260 855}%
\special{pa 1254 870}%
\special{pa 1253 875}%
\special{pa 1251 880}%
\special{pa 1250 885}%
\special{pa 1248 890}%
\special{pa 1247 895}%
\special{pa 1245 900}%
\special{pa 1244 905}%
\special{pa 1242 910}%
\special{pa 1241 915}%
\special{pa 1239 920}%
\special{pa 1238 925}%
\special{pa 1236 930}%
\special{pa 1233 945}%
\special{pa 1231 950}%
\special{pa 1228 965}%
\special{pa 1226 970}%
\special{pa 1214 1030}%
\special{pa 1214 1035}%
\special{pa 1211 1050}%
\special{pa 1211 1055}%
\special{pa 1208 1070}%
\special{pa 1208 1075}%
\special{pa 1207 1080}%
\special{pa 1207 1085}%
\special{pa 1206 1090}%
\special{pa 1206 1095}%
\special{pa 1205 1100}%
\special{pa 1205 1105}%
\special{pa 1204 1110}%
\special{pa 1204 1115}%
\special{pa 1203 1120}%
\special{pa 1203 1125}%
\special{pa 1202 1130}%
\special{pa 1202 1145}%
\special{pa 1201 1150}%
\special{pa 1201 1165}%
\special{pa 1200 1170}%
\special{pa 1200 1230}%
\special{pa 1201 1235}%
\special{pa 1201 1250}%
\special{pa 1202 1255}%
\special{pa 1202 1270}%
\special{pa 1203 1275}%
\special{pa 1203 1280}%
\special{pa 1204 1285}%
\special{pa 1204 1290}%
\special{pa 1205 1295}%
\special{pa 1205 1300}%
\special{pa 1206 1305}%
\special{pa 1206 1310}%
\special{pa 1207 1315}%
\special{pa 1207 1320}%
\special{pa 1208 1325}%
\special{pa 1208 1330}%
\special{pa 1211 1345}%
\special{pa 1211 1350}%
\special{pa 1214 1365}%
\special{pa 1214 1370}%
\special{pa 1226 1430}%
\special{pa 1228 1435}%
\special{pa 1231 1450}%
\special{pa 1233 1455}%
\special{pa 1236 1470}%
\special{pa 1238 1475}%
\special{pa 1239 1480}%
\special{pa 1241 1485}%
\special{pa 1242 1490}%
\special{pa 1244 1495}%
\special{pa 1245 1500}%
\special{pa 1247 1505}%
\special{pa 1248 1510}%
\special{pa 1250 1515}%
\special{pa 1251 1520}%
\special{pa 1253 1525}%
\special{pa 1254 1530}%
\special{pa 1260 1545}%
\special{pa 1261 1550}%
\special{pa 1267 1565}%
\special{pa 1268 1570}%
\special{pa 1292 1630}%
\special{pa 1295 1635}%
\special{pa 1301 1650}%
\special{pa 1304 1655}%
\special{pa 1310 1670}%
\special{pa 1313 1675}%
\special{pa 1315 1680}%
\special{pa 1318 1685}%
\special{pa 1320 1690}%
\special{pa 1323 1695}%
\special{pa 1325 1700}%
\special{pa 1328 1705}%
\special{pa 1330 1710}%
\special{pa 1333 1715}%
\special{pa 1335 1720}%
\special{pa 1338 1725}%
\special{pa 1340 1730}%
\special{pa 1349 1745}%
\special{pa 1351 1750}%
\special{pa 1360 1765}%
\special{pa 1362 1770}%
\special{pa 1398 1830}%
\special{pa 1402 1835}%
\special{pa 1411 1850}%
\special{pa 1415 1855}%
\special{pa 1424 1870}%
\special{pa 1428 1875}%
\special{pa 1431 1880}%
\special{pa 1435 1885}%
\special{pa 1438 1890}%
\special{pa 1442 1895}%
\special{pa 1445 1900}%
\special{pa 1449 1905}%
\special{pa 1452 1910}%
\special{pa 1456 1915}%
\special{pa 1459 1920}%
\special{pa 1463 1925}%
\special{pa 1466 1930}%
\special{pa 1478 1945}%
\special{pa 1481 1950}%
\special{pa 1493 1965}%
\special{pa 1496 1970}%
\special{pa 1520 2000}%
\special{fp}%
\put(16.0000,-6.0000){\makebox(0,0){$\Sigma$}}%
\end{picture}}%
  \end{tabular}
 \end{center}
\caption{The coordinates $(u,v)$ at an $A_2$ semi-definite point (left) and 
an $A_3$ semi-definite point (right), where $\Sigma$
is the set of semi-definite points.}
\label{fig:a2a3}
\end{figure}
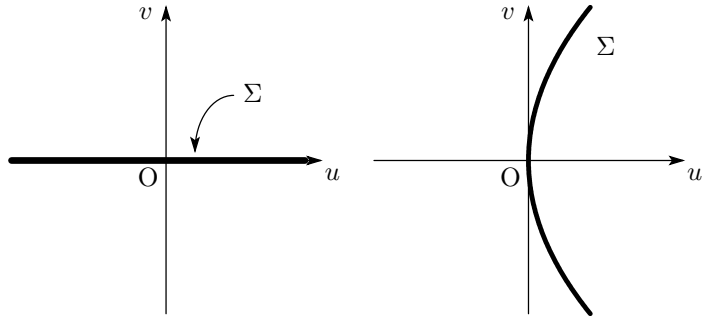

We now prove the following assertion:

\begin{proposition}\label{prop:K-coord}
 Let $ds^2$ be a $C^r$-differentiable
 Kossowski metric on $M^2$, and let
 $\gamma$ be a regular curve
 passing through
 $p$ $(=\gamma(0))\in M^2$ so that $\gamma'(0)$
 is not a null vector when $p$ is a semi-definite point.
 Then there exists 
 a $C^r$-differentiable K-orthogonal coordinate
 system $(U;u,v)$ centered at $p$ such that the
 $u$-axis corresponds to the curve $\gamma$.
 Moreover, 
 if $p$ is an $A_2$ semi-definite point,
 then the $u$-axis can be taken as a
 characteristic curve {\rm(}see Figure \ref{fig:a2a3}{\rm)}.
\end{proposition}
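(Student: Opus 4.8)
The plan is to build the coordinates in two stages and then check the one remaining {\rm K}-orthogonality condition by hand. First I would orthogonalize via Lemma \ref{lem:adap_ortho}, producing a coordinate system adapted to $\gamma$ in which $F\equiv 0$; then I would normalize the $u$-direction by the reparametrization of Lemma \ref{lem:adap_ortho2} so that $E=1$ holds along the $u$-axis. The genuinely new content beyond these two lemmas is (a) verifying condition (3) of Definition \ref{def:Ev} along the \emph{whole} characteristic curve $\sigma$, and (b) justifying, for the last assertion, that the characteristic curve itself is a legitimate choice of $\gamma$ at an $A_2$ point.

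First, applying Lemma \ref{lem:adap_ortho} to $ds^2$ and $\gamma$ yields a $C^r$-coordinate neighborhood $(U;u,v)$ whose $u$-axis parametrizes $\gamma$, with $F\equiv 0$ on $U$ and with $\partial_v$ pointing in the null direction at every semi-definite point of $U$. Since the $u$-axis parametrizes $\gamma$ and $\gamma'(0)$ is not a null vector, $E(p)=ds^2(\partial_u,\partial_u)|_p>0$, so after shrinking $U$ we may assume $E>0$ on $U$. Because $F\equiv 0$, the Kossowski relation \eqref{eq:k1} gives $EG=\lambda^2$, hence $G=\lambda^2/E$, and $ds^2$ is exactly of the form \eqref{eq:K000}. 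Lemma \ref{lem:adap_ortho2} then applies directly: the reparametrization \eqref{eq:newcoord} produces a $C^r$-system, which I again denote by $(u,v)$, in which $ds^2$ takes the normalized form \eqref{cor:normal}. This gives conditions (1) and (2) of Definition \ref{def:Ev}; moreover the reparametrization alters only the $u$-variable and fixes $v$, so it preserves the locus $\{v=0\}$ and the $u$-axis still parametrizes $\gamma$, and real analyticity is preserved by Lemma \ref{lem:adap_ortho}(4). When $p$ is a regular point there is no curve $\sigma$ and condition (3) is vacuous, so the proof is already complete in that case.

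The main step is condition (3). In the orthogonal coordinates above, $G=\lambda^2/E$ vanishes identically along $\sigma$ (where $\lambda=0$); together with $F\equiv 0$ this shows $ds^2(\partial_v,\vect{x})=0$ for all $\vect{x}$, i.e. $\partial_v\in\mathcal{N}_{\sigma(t)}$ for every $t$, so the coordinate is adjusted at each point of $\sigma$. Since $ds^2$ is a Kossowski metric, each such point is admissible, so $\hat\Theta$ vanishes there; by the computation in the proof of Proposition \ref{prop:property}, $2\hat\Theta(\partial_u,\partial_u,\partial_v)=2F_u-E_v$ at every point where $\partial_v$ is null, whence $E_v=2F_u=0$ along $\sigma$ (using $F_u\equiv 0$). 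Finally, the reparametrization \eqref{eq:newcoord} changes only $u$, so $\partial_{\tilde v}=\partial_v$ remains null along $\sigma$ and $\tilde F\equiv 0$; the same admissibility argument then gives $\tilde E_{\tilde v}=0$ along $\sigma$, which is precisely condition (3). I expect this verification—ensuring (3) holds along all of $\sigma$ and survives the reparametrization—to be the only delicate point.

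For the last assertion, let $p$ be an $A_2$ semi-definite point with characteristic curve $\sigma$. By Lemma \ref{lem:null-add}, $\mathcal{N}_p$ is one-dimensional, so a tangent vector at $p$ is null exactly when it is proportional to the null direction $\eta(0)$. By Definition \ref{def:a2a3add}, $\sigma'(0)$ is linearly independent of $\eta(0)$, hence is not a null vector. Thus I may take $\gamma=\sigma$ in the construction above, and the resulting $u$-axis coincides with the characteristic curve, as required.
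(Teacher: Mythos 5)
Your proposal is correct and follows essentially the same route as the paper: apply Lemma \ref{lem:adap_ortho} to get orthogonal coordinates along $\gamma$ of the form \eqref{eq:K000}, then Lemma \ref{lem:adap_ortho2} to normalize $E=1$ on the $u$-axis, and take $\gamma=\sigma$ at an $A_2$ point since $\sigma'(0)$ is not null. Your explicit verification of condition (3) of Definition \ref{def:Ev} via admissibility (i.e.\ $E_v=2F_u=0$ along $\sigma$) and of its persistence under the reparametrization \eqref{eq:newcoord} is exactly the step the paper leaves implicit in the statement of Lemma \ref{lem:adap_ortho2}, so it is a welcome but not divergent addition.
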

\begin{proof}
 By Lemma \ref{lem:adap_ortho},
 there exists a $C^r$-differentiable 
 orthogonal coordinate system
 $(u,v)$ centered at each point $p\in M^2$ such that
 the metric has the expression
 as in \eqref{eq:K000},
 and the $\tilde u$-axis parametrizes the curve $\gamma$.
 Then we can apply Lemma \ref{lem:adap_ortho2}
 for this coordinate system, and obtain
 the desired K-orthogonal coordinate system. 
 If $p$ is an $A_2$ semi-definite point,
 then we can choose $\gamma$ to be the
 characteristic curve $\sigma$. In this case,
 the $u$-axis parametrizes $\sigma$.
\end{proof}

\subsection{A representation formula for $A_2$ semi-definite points}
In this subsection, we give a representation formula for 
$C^r$-differentiable Kossowski metric germs at $A_2$ semi-definite points.
We fix an $A_2$ semi-definite 
point $p\in M^2$ of a $C^r$-differentiable
Kossowski metric $ds^2$,
and take a $C^r$-differentiable
K-orthogonal coordinate system $(u,v)$ 
(cf.\ Proposition \ref{prop:K-coord})
centered at $p$
with the expression as in
\eqref{cor:normal}.
We may assume the $u$-axis parametrizes the
characteristic curve.
We set
$\omega(u,v):=\log \rho(u,v)$.
Since $\rho(u,0)=1$, we have $\omega(u,0)=0$. So
there exists a $C^r$-function $\omega_1(u,v)$
such that $\omega(u,v)=v \omega_1(u,v)$.
So we can write
\[
   \rho(u,v)=\exp(v \omega_1(u,v)).
\] 
Since $p$ is of type $A_2$, we may assume that the $u$-axis parametrizes
the semi-definite set.
Since $\rho_v(u,0)=0$ holds
(cf.~(3) of Definition \ref{def:Ev}), we have that
$\omega_1(u,0)=0$.
So there exists a $C^r$-function germ $\omega_2(u,v)$
at the origin so that $\omega_1(u,v)=v \omega_2(u,v)$.  
In particular,
$\rho(u,v)=e^{v^2 \omega_2(u,v)}$ holds.
Since the $u$-axis is the semi-definite set,
we have $\lambda(u,0)=0$, and 
there exists a $C^r$-function germ $\hat \lambda(u,v)$
at the origin so that 
$\lambda(u,v)=v \hat \lambda(u,v)$.
Since $(0,0)$ is non-degenerate, we may 
assume $\hat \lambda(0,0)> 0$.
We denote by $C^r_0(\R^2)$ the set of germs of
$C^r$-functions at $(0,0)$ on $\R^2$.
Summarizing the above discussions, we obtain the following assertion.

\begin{theorem}\label{thm:coord}
 Let $h(u,v)$ and $k(u,v)$ be two germs in $C^r_0(\R^2)$.
 Then 
 \[
    ds^2:=\rho^2du^2+ (\rho^{-1}\lambda)^2dv^2
          \qquad \left(\rho:=e^{v^2 h(u,v)},\,\,\,
        \lambda:=v e^{k(u,v)}\right)
 \]
 gives a $C^r$-differentiable
 Kossowski metric germ at an $A_2$ semi-definite point. 
 Conversely, any $C^r$-differentiable Kossowski metric germ with
 $A_2$ semi-definite points is given in this manner.
 Moreover, the Euler form along the semi-definite set
 {\rm(}i.e.\ the $u$-axis{\rm)}
 is given by
 \begin{equation}\label{eq:Omega11}
  \Omega(u,0)
   =
   e^{-k(u,0)} \left(2 h(u,0) k_v(u,0)-3 h_v(u,0)\right)du\wedge dv.
 \end{equation}
\end{theorem}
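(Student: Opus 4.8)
The plan is to establish the three assertions separately: that the displayed metric is a Kossowski metric germ with an $A_2$ semi-definite point at the origin, the converse representation, and the Euler-form formula \eqref{eq:Omega11}. The converse is already contained in the discussion preceding the statement: beginning with an arbitrary $C^r$-differentiable Kossowski metric germ at an $A_2$ semi-definite point, Proposition \ref{prop:K-coord} furnishes a {\rm K}-orthogonal coordinate system in which $ds^2=(\rho\,du)^2+(\rho^{-1}\lambda\,dv)^2$ with $\rho(u,0)=1$. The successive divisions $\omega:=\log\rho=v\omega_1=v^2\omega_2$ (using $\rho(u,0)=1$ and $\rho_v(u,0)=0$), together with $\lambda=v\hat\lambda$ and $\hat\lambda(0,0)>0$ (using that the $u$-axis is the semi-definite set), yield $\rho=e^{v^2h}$ and $\lambda=v e^{k}$ upon setting $h:=\omega_2$ and $k:=\log\hat\lambda$. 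Thus the converse only requires assembling these observations.

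For the forward direction I would verify Definition \ref{def:K2} directly. Writing $E=\rho^2=e^{2v^2h}>0$, $F=0$ and $G=\rho^{-2}\lambda^2\ge 0$, positive semi-definiteness is immediate, while $EG-F^2=\lambda^2=(v e^{k})^2$ exhibits $\lambda=v e^{k}$ as a signed area density; since $\lambda_v(0,0)=e^{k(0,0)}\ne 0$, the non-degeneracy condition \eqref{eq:k2} holds and the origin is a semi-definite point. Because $e^{k}>0$, the semi-definite set is exactly $\{v=0\}$, and along it $\partial_v$ is null (as $F=G=0$ there), so $(u,v)$ is adjusted at every such point. I would then check the admissibility conditions \eqref{eq:admissible} of Proposition \ref{prop:property}: $F_u=0$ holds trivially, and $E_v(u,0)=0$, $G_u(u,0)=G_v(u,0)=0$ follow from the explicit forms $E_v=2vE(2h+vh_v)$ and $G=v^2e^{2k-2v^2h}$. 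Finally, the characteristic curve through the origin is the $u$-axis with tangent $\partial_u$, transverse to the null direction $\partial_v$, so the origin is of type $A_2$ by Definition \ref{def:a2a3add}.

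For \eqref{eq:Omega11} I would compute $\Omega=K\,\lambda\,du\wedge dv$ from the standard orthogonal-coordinate expression
\[
K=-\frac{1}{2\sqrt{EG}}\left[\left(\frac{G_u}{\sqrt{EG}}\right)_u+\left(\frac{E_v}{\sqrt{EG}}\right)_v\right],
\]
valid off the semi-definite set. Since $\sqrt{EG}=|\lambda|$, a short sign check on the half-planes $v>0$ and $v<0$ shows that the identity
\[
K\lambda=-\frac12\left[\left(\frac{G_u}{\lambda}\right)_u+\left(\frac{E_v}{\lambda}\right)_v\right]
\]
holds with the signed density $\lambda=v e^{k}$. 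Moreover both quotients are $C^r$ across $\{v=0\}$, since cancelling the factor $v$ gives $G_u/\lambda=v e^{k-2v^2h}(2k_u-2v^2h_u)$ and $E_v/\lambda=2e^{2v^2h-k}(2h+vh_v)$. Hence $K\lambda$ extends smoothly, in agreement with Proposition \ref{prop:lambda} and \cite[Theorem 2.15]{HHNSUY}, and evaluating the right-hand side at $v=0$ — where the $u$-derivative term vanishes because it retains a factor $v$, and the $v$-derivative term contributes $2e^{-k}(3h_v-2hk_v)$ — yields $\Omega(u,0)=e^{-k(u,0)}\bigl(2h(u,0)k_v(u,0)-3h_v(u,0)\bigr)\,du\wedge dv$, as claimed.

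The step I expect to be the main obstacle is the bookkeeping of the last paragraph. One must justify that the signed identity for $K\lambda$ is legitimate, since the positive root $\sqrt{EG}$ differs from $\lambda$ by a sign on $\{v<0\}$; confirm that the quotients $G_u/\lambda$ and $E_v/\lambda$ genuinely extend as $C^r$ functions, so that the smooth extension of $\Omega$ may be computed by direct differentiation at $v=0$; and carry out the two differentiations consistently along $\{v=0\}$. By contrast, the remaining verifications (positive semi-definiteness, \eqref{eq:k2}, admissibility, and type $A_2$) are routine once the explicit $E$, $F$, $G$ are recorded.
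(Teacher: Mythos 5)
Your proposal is correct and follows essentially the same route as the paper: the converse is assembled from the K-orthogonal coordinate discussion preceding the theorem, and the Euler form comes from the standard orthogonal-coordinate Gaussian curvature formula (the paper's \eqref{eq:K0000}, equivalent to the form you quote), evaluated along $v=0$ via the smooth extension of $K\lambda$. Your extra steps — explicitly verifying Definition \ref{def:K2} and Proposition \ref{prop:property} for the forward direction, and checking that the signed density $\lambda$ may replace $\sqrt{EG}=|\lambda|$ on both half-planes — fill in details the paper leaves implicit but do not change the argument.
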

\begin{proof}
 Let $d\tau^2$ be a Riemannian
 metric which is expressed 
 as $d\tau^2=Edu^2+Gdv^2$. It is well-known that
 the Gaussian curvature $K$ of $d\tau^2$
 is given by (cf. \cite[\S 10]{UY-book})
 \begin{equation}\label{eq:K0000}
  K=-\frac1{\hat e\hat g}\left( 
			  \left(\frac{\hat g_u}{\hat e}\right)_{\!\!u}
			  +
			  \left(\frac{\hat e_v}{\hat g}\right)_{\!\!v}
	\right),
 \end{equation}
 where $\hat e:=\sqrt{E}$ and $\hat g:=\sqrt{G}$.
 Applying this formula for $ds^2$
 at its regular points, we obtain
 \eqref{eq:Omega11}.
 By continuity,
 \eqref{eq:Omega11} holds even at
 semi-definite points of $ds^2$.
\end{proof}

In \cite[Proposition 2.29]{HHNSUY}, we gave another 
representation formula for $A_2$ semi-definite points, which controls 
$E$, $G$ $(:=\lambda^2/E)$ but not $\rho$, $\lambda$.

\subsection{A representation formula for $A_3$ semi-definite points}\
We next consider the case that $p=(0,0)$ is an $A_3$ semi-definite
point
of a Kossowski metric $ds^2$, with the expression as in \eqref{cor:normal}.
This case is not discussed in \cite{HHNSUY}.
We set
\[
   \omega(u,v):=\log \rho(u,v).
\]
Since $\rho=1$ on the $u$-axis,
we  have
$\omega(u,0)=0$.
Since $\partial_v$ gives the tangential direction
of the characteristic curve at $(0,0)$ (cf.\ Figure \ref{fig:a2a3}, right),
the characteristic curve can be expressed as the image 
of a certain graph
$u=g(v)$ satisfying $g(0)=g'(0)=0$.
We set
\[
    \mu(v):=\det\pmt{g'(v) & 0 \\
            1  & 1}=g'(v).
\]
Since $p$ is of type $A_3$,
\eqref{eq:a3} yields that $\mu(0)=0$ and
$\mu'(0)=g''(0)\ne 0$.
Replacing $(u,v)$ by $(-u,v)$ if necessary,
we may assume that $g''(0)>0$
without loss of generality.
Then there exists a $C^r$-function $\hat g(v)$ 
($\hat g(0)> 0$) such that $g(v)=v^2 \hat g(v)$.
Take new coordinates
$\tilde u:=u$ and
$\tilde v:=v \sqrt{\hat g(v)}$, then
the semi-definite set can be expressed as $\tilde u=\tilde v^2$.
So, we may assume that
the parabola $u=v^2$ gives the semi-definite set.
Since $\omega_v(v^2,v)=0$ 
(cf.~(3) of Definition \ref{def:Ev}), 
we have
$\omega_v(u,v)=(u-v^2)a(u,v)$
($a\in C^r_0(\R^2)$). In particular,
$\omega_{vv}(0,0)=0$ 
holds. Since $\omega(u,0)=0$, we have
\[
    \omega(u,v)=\int_0^v (u-w^2)a(u,w)dw.
\]
On the other hand,
since $\lambda(v^2,v)=0$ and $(0,0)$ 
is a non-degenerate semi-definite point, 
we can write
$\lambda=(u-v^2)\hat \lambda(u,v)$ $(\hat \lambda(0,0)\ne 0)$,
where $\hat \lambda\in C^r_0(\R^2)$.
Thus, we obtain the following:

\begin{theorem}\label{thm:coord2}
 Let $h(u,v)$ and $k(u,v)$ be two germs in $C^r_0(\R^2)$.
 Then 
 \begin{align*}
  &ds^2:=\rho^2du^2+ (\rho^{-1}\lambda)^2dv^2,\\
  &\quad \left(\rho(u,v):=\exp\left(\int_0^v (u-w^2)h(u,w)dw\right),
  \,\, \lambda(u,v):=(u-v^2)e^{k(u,v)}\right)
 \end{align*}
 gives a $C^r$-differentiable
 Kossowski metric germ at an $A_3$ semi-definite point. 
 Conversely, any $C^r$-differentiable Kossowski metric germ at
 $A_3$ semi-definite points is given in this manner.
 Moreover, the Euler form at the origin
 is given by
 \begin{equation}\label{eq:O2}
  \Omega(0,0)
   =
   \left[e^{-k(0,0)}\biggl(-h_v(0,0)+h(0,0)k_v(0,0)\biggr)
    -2 e^{k(0,0)} k_u(0,0)\right]
   du\wedge dv.
 \end{equation}
\end{theorem}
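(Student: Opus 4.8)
The plan is to prove the three assertions in turn: that the displayed $ds^2$ is a Kossowski metric with an $A_3$ semi-definite point (the \emph{if} direction), that every such germ arises in this way (the converse, which is essentially the content of the discussion immediately preceding the statement), and finally the computation of the Euler form at the origin via the curvature formula \eqref{eq:K0000}.

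For the first assertion I would verify the conditions of Definition \ref{def:K2} directly in the coordinates $(u,v)$. Since $\rho>0$, the metric is a sum of two squares, hence positive semi-definite, and $EG-F^2=\rho^2\,(\rho^{-1}\lambda)^2=\lambda^2$ with $F=0$, so $\lambda$ is the signed area density. Because $e^{k}$ never vanishes, the semi-definite set is exactly the parabola $u=v^2$, and $\lambda_u=e^{k}\neq0$ there gives non-degeneracy \eqref{eq:k2}. Writing $\omega:=\log\rho$, one has along the parabola $F=0$, $E_v=2\omega_v e^{2\omega}=2(u-v^2)h\,e^{2\omega}=0$, and $G=G_u=G_v=0$ because each carries a factor of $\lambda$; since $\partial_u$ has positive length the point is rank one, so by Proposition \ref{prop:property} this is precisely admissibility. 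Finally, parametrizing the parabola by $\sigma(t)=(t^2,t)$ and taking $\eta=\partial_v$ as the null field (Proposition \ref{prop:null}), I get $\det(\sigma'(t),\eta(t))=2t$, so $\sigma'(0)$ is parallel to the null direction while the derivative at $t=0$ equals $2\neq0$; thus $p$ is of type $A_3$ by \eqref{eq:a3}.

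The converse direction is prepared by the argument just above the theorem: starting from a {\rm K}-orthogonal coordinate system furnished by Proposition \ref{prop:K-coord} (taking the $u$-axis to be any regular curve whose velocity is non-null, so that the characteristic curve appears as a graph $u=g(v)$), the normalization $\tilde u:=u$, $\tilde v:=v\sqrt{\hat g(v)}$ turns the characteristic set into $u=v^2$. Then $\omega_v(v^2,v)=0$ (condition (3) of Definition \ref{def:Ev}) together with $\omega(u,0)=0$ forces $\rho=\exp\!\big(\int_0^v(u-w^2)h\,dw\big)$, while non-degeneracy gives $\lambda=(u-v^2)e^{k}$ with $e^{k}(0,0)\neq0$. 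Taking $h$ and $k$ to be the resulting germs closes the loop.

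The Euler form is where the real work lies. With $\hat e=\rho$ and $\hat g=\rho^{-1}\lambda$ in \eqref{eq:K0000} one has $\hat e\hat g=\lambda$, so on the regular set $\Omega=K\lambda\,du\wedge dv=-\big[(\hat g_u/\rho)_u+(\rho\rho_v/\lambda)_v\big]\,du\wedge dv$. The main obstacle is the second term, which naively blows up along the characteristic set because of the division by $\lambda$; the key observation is that $\rho_v=(u-v^2)h\,\rho$ carries exactly the vanishing factor of $\lambda=(u-v^2)e^{k}$, so that $\rho\rho_v/\lambda=\rho^2 h\,e^{-k}$ is in fact $C^r$ and $\Omega$ extends across $u=v^2$ by continuity. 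Differentiating this regularized expression and the first term, then evaluating at the origin (where $\rho=1$, $\rho_u=\rho_v=0$, $\lambda=0$ and $\lambda_{uu}(0,0)=2k_u e^{k}$), the first term contributes $-2e^{k}k_u$ and the second $e^{-k}(-h_v+h\,k_v)$, all at $(0,0)$, which is exactly \eqref{eq:O2}. I expect the genuinely delicate point to be justifying the continuous extension of $\Omega$ across the parabola and confirming that the cancellation of the factor $(u-v^2)$ is clean; the remaining derivative bookkeeping is routine once $\rho$, $\lambda$ and their low-order derivatives at the origin are recorded.
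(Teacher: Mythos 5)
Your proposal is correct and follows essentially the same route as the paper: the converse is exactly the derivation the paper gives immediately before the theorem (K-orthogonal coordinates, normalizing the characteristic set to the parabola $u=v^2$, then dividing $\omega_v$ and $\lambda$ by $u-v^2$), and your Euler-form computation via \eqref{eq:K0000}, hinging on the cancellation $\rho\rho_v/\lambda=\rho^2he^{-k}$ and the values $\rho=1$, $\rho_u=\rho_v=0$, $\lambda_{uu}=2k_ue^{k}$ at the origin, is precisely the computation the paper compresses into its one-line remark that \eqref{eq:O2} follows from \eqref{eq:Omega11} and $\omega_{vv}(0,0)=0$. Your explicit check of the forward direction (admissibility via Proposition \ref{prop:property}, non-degeneracy of $\lambda$, and the $A_3$ condition from $\det(\sigma'(t),\eta(t))=2t$) is left implicit in the paper but is a correct completion rather than a different method.
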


The formula \eqref{eq:O2} can be proved
using \eqref{eq:Omega11} and
the fact $\omega_{vv}(0,0)=0$.

\subsection{Distance functions associated with Kossowski metrics}
As an application of the existence of K-orthogonal coordinates,
we investigate 
properties of the distance functions
induced by  Kossowski metrics:
\begin{definition}
 Suppose that $M^2$ is connected.
 Let $ds^2$ be a Kossowski metric on $M^2$,
 and fix two points $p,q\in M^2$.
 We denote by $\P_{p,q}$ the
 set of piecewise smooth arcs combining two points
 $p$ and $q$, and set
 $d_{ds^2}(p,q):=\inf\{L_{ds^2}(\gamma)\,;\gamma\in \P_{p,q} \}$,
 where $L_{ds^2}(\gamma)$ is 
 the length of the arc $\gamma\in \P_{p,q}$
 with respect to $ds^2$,
 that is, 
 \[
     L_{ds^2}(\gamma) := \int_{\gamma} 
           \sqrt{ds^2(\gamma'(t), \gamma'(t))}\,dt.
 \]
 We call $d_{ds^2}:M^2\times M^2\to [0,\infty)$ 
 the \emph{pre-distance function}
 associated with $ds^2$.
\end{definition}

Since $d_{ds^2}(x,y)$ is symmetric with respect to
the variables $x$, $y$ and satisfies the
triangle inequality
by definition, it gives a distance function if
and only if $d_{ds^2}(x,y)=0$ implies $x=y$.

\begin{definition}
 Let $ds^2$ be a Kossowski metric on $M^2$.
 A semi-definite point $p\in M^2$
 is called a \emph{peak}
 if there exists a neighborhood $U$ of $p$
 such that the semi-definite points on
 $U\setminus \{p\}$ consists only of $A_2$ semi-definite points.
\end{definition}

For example, $A_2$ or $A_3$ semi-definite points are peaks.
On the other hand, 
$ds^2_0=du^2+u^2 dv^2$ 
gives a Kossowski metric whose semi-definite set coincides
with the $v$-axis.
Each point of the $v$-axis is not a peak, since the null-direction
$\partial_v$ gives the tangential direction 
of the $v$-axis as its characteristic curve.

\begin{remark}
 In \cite{SUY_annals}, 
 a \lq\lq peak singularity'' on wave fronts is
 defined. Suppose that $ds^2$ is the first fundamental form 
 of a wave front $f$. 
 Let $p\in M^2$ be a non-degenerate  singular point of
 $f$. 
 Then 
 there exists a neighborhood $U$ of
 $p\in M^2$ such that the restriction of $ds^2$ 
 on $U$ is a Kossowski metric.
 Moreover, $p$ is a peak with respect
 to $ds^2$ if and only if $p$
 is at most non-degenerate peak singular point of $f$. 
 This fact immediately follows
 from the definition of peaks  of
 $f$. 
\end{remark}

We show the following assertion:
\begin{theorem}\label{thm:induced-m}
 Let $ds^2$ be a $C^r$-differentiable
 Kossowski metric on $M^2$ 
 whose semi-definite points
 consist only of peaks. Then the 
 pre-distance function 
 associated with $ds^2$ gives a distance function
 which is compatible with the topology of $M^2$.
\end{theorem}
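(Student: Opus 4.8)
The plan is to reduce both conclusions---that $d_{ds^2}(x,y)=0$ forces $x=y$, and that the $d_{ds^2}$-topology agrees with the manifold topology---to a single local estimate near each point $p$: a homeomorphism $\Psi$ onto a planar region together with a Lipschitz comparison $c\,|\Psi(x)-\Psi(y)|\le L_{ds^2}(\gamma)$ valid for every arc $\gamma$ joining $x,y$ inside a fixed chart. I would dispose of the two easy halves first. Since in any chart one has $ds^2\le C(du^2+dv^2)$, the straight segment gives $d_{ds^2}(x,y)\le \sqrt{C}\,|x-y|$ (Euclidean), so $d_{ds^2}$ is continuous and the manifold topology is finer than the $d_{ds^2}$-topology; by the remark following the definition of the pre-distance, it then only remains to establish positivity together with the converse comparison of topologies. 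At a regular point the metric is Riemannian and the required local lower bound is classical, so the entire content is concentrated at a peak.

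Fixing a peak $p$, I would use Proposition~\ref{prop:K-coord} to choose a K-orthogonal coordinate system $(U;u,v)$ with $ds^2=\rho^2\,du^2+(\lambda/\rho)^2\,dv^2$ as in \eqref{cor:normal}, where $\rho\ge c_0>0$ on a smaller ball $B\subset U$, and then introduce
$$
\Psi(u,v):=\Bigl(u,\ \psi(u,v)\Bigr),\qquad
\psi(u,v):=\int_0^v \frac{|\lambda(u,w)|}{\rho(u,w)}\,dw,
$$
the second coordinate being the unsigned $ds^2$-arclength along the null $v$-lines measured from the $u$-axis. The first coordinate already yields $\sqrt{c_0}\,|u(x)-u(y)|\le L_{ds^2}(\gamma)$ because $ds^2\ge \rho^2\,du^2$. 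For $\psi$ one has $\psi_v=|\lambda|/\rho$ while $\psi_u$ stays bounded, so $(d\psi)^2\le C'\,ds^2$ on $B$; integrating along $\gamma$ gives $|\psi(x)-\psi(y)|\le \sqrt{C'}\,L_{ds^2}(\gamma)$. Together these produce the comparison $c\,|\Psi(x)-\Psi(y)|\le L_{ds^2}(\gamma)$ for every arc $\gamma\subset B$.

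The step where the peak hypothesis is indispensable---and which I expect to be the hard part---is the injectivity of $\Psi$, i.e. the \emph{strict} monotonicity of $v\mapsto\psi(u_0,v)$ on each $v$-line. Its derivative $|\lambda|/\rho$ vanishes exactly on the semi-definite set $\Sigma$, which near $p$ is a single regular curve $\sigma$ (the characteristic curve, by \eqref{eq:k2} and the implicit function theorem). By Definition~\ref{def:a2a3add}, at every $A_2$ point the tangent $\sigma'$ is independent of the null direction $\partial_v$; and the definition of peak forces every semi-definite point of $U\setminus\{p\}$ to be of type $A_2$, so $\sigma$ is transverse to the $v$-lines away from $p$ and can be tangent to one only at $p$ itself. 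Consequently $\sigma$ contains no $v$-line segment, hence meets each $v$-line in isolated points, and $\psi(u_0,\cdot)$ is strictly increasing. Thus $\Psi$ is a continuous injection, so by invariance of domain a homeomorphism onto an open planar set. This is precisely the step that fails for the non-peak metric $du^2+u^2\,dv^2$, whose semi-definite set \emph{is} a null curve: there $\psi\equiv0$ along the $v$-axis and distinct points lie at $d_{ds^2}$-distance $0$.

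Finally I would globalize. Writing $\delta:=\op{dist}\bigl(\Psi(p),\Psi(\partial B)\bigr)>0$, the comparison applied to the initial sub-arc up to the first exit from $B$ shows that any arc from $p$ leaving $B$ has length $\ge c\,\delta$, while an arc staying in $B$ has length $\ge c\,|\Psi(y)-\Psi(p)|$. Hence $d_{ds^2}(p,y)\ge c\,\min\{\delta,|\Psi(y)-\Psi(p)|\}>0$ whenever $y\ne p$, which gives positivity and so, with the symmetry and triangle inequality already noted, that $d_{ds^2}$ is a genuine distance. For the topology, any $y$ with $d_{ds^2}(p,y)<c\delta$ is joined to $p$ by an arc too short to exit $B$, so $y\in B$ and $|\Psi(y)-\Psi(p)|\le c^{-1}d_{ds^2}(p,y)$; since $\Psi$ is a homeomorphism these balls shrink to $p$, giving $B_{d_{ds^2}}(p,\varepsilon)\subset V$ for any prescribed neighborhood $V$ and small $\varepsilon$. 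With the upper bound from the first paragraph, the two topologies coincide.
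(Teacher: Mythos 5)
Your proof is correct in substance, but it follows a genuinely different route from the paper's. The paper proceeds by a two-step reduction: first it proves an escape estimate at an $A_2$ semi-definite point (Lemma \ref{lemma:induced-2}), working in K-orthogonal coordinates whose $u$-axis \emph{is} the characteristic curve, and splitting any arc that reaches the boundary of a small rectangle into either a horizontal crossing (length bounded below via $\min\sqrt{E}$) or a vertical crossing of the sub-rectangle $|v|\ge\epsilon_2/2$, on which $G>0$; this gives the all-$A_2$ case (Lemma \ref{lem:A2thm}). Then the peak case (Lemma \ref{lemma:induced-a3}) is reduced to the all-$A_2$ case by an annulus argument: any arc from the peak to the boundary of a small disc must cross an annulus containing only regular and $A_2$ points, where Lemma \ref{lem:A2thm} already yields a positive lower bound. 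You instead work directly at the peak: you build the comparison map $\Psi=(u,\psi)$, with $\psi$ the unsigned fiberwise $ds^2$-arclength, prove injectivity --- and this is exactly where the peak hypothesis enters, through the observation that the semi-definite set can contain no null segment, since an interior point of such a segment would have characteristic tangent parallel to $\partial_v$ and hence fail to be of type $A_2$ --- and then derive positivity and the topology statement simultaneously from the single comparison $c\,|\Psi(x)-\Psi(y)|\le L_{ds^2}(\gamma)$. Your route avoids both the intermediate all-$A_2$ lemma and the annulus reduction, and it produces a quantitative local conclusion (a one-sided Lipschitz comparison of $(B,d_{ds^2})$ with a planar region) that the paper's argument does not give; the paper's route, in exchange, uses only elementary length estimates, with no almost-everywhere differentiation and no invariance of domain, and establishes Lemma \ref{lem:A2thm}, which is of independent interest.

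Two points in your write-up need patching, though neither is fatal. First, $\psi(u,v)=\int_0^v|\lambda(u,w)|/\rho(u,w)\,dw$ is only Lipschitz, not $C^1$, in $u$ (the absolute value is non-smooth across the semi-definite set), so the inequality \lq\lq$(d\psi)^2\le C'\,ds^2$\rq\rq\ cannot be read literally; it must be recast as an integrated estimate along arcs, proved by difference quotients: for a piecewise smooth arc $\gamma=(u(t),v(t))$ one gets, almost everywhere, $|(\psi\circ\gamma)'|\le (|\lambda|/\rho)\,|v'|+L\,|v|\,|u'|$, where $L$ bounds the Lipschitz constant of $u\mapsto|\lambda(u,w)|/\rho(u,w)$, and both terms are dominated by a constant multiple of $\sqrt{\rho^2(u')^2+(\lambda/\rho)^2(v')^2}$ because $\rho\ge c_0>0$ on $B$. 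Second, your claim that the semi-definite set meets each $v$-line in isolated points is not quite accurate: intersection points may accumulate at the peak $p$ itself (e.g. a flat curve oscillating across the $v$-axis). This is harmless, however, since what your monotonicity argument actually requires is only the absence of a $v$-segment inside the semi-definite set, which your transversality argument does establish and which gives $\int_{v_1}^{v_2}|\lambda(u_0,w)|\,dw>0$ directly.
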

We set
\begin{equation}\label{eq:Bp}
 B_{ds^2}(p,r):=
  \left\{q \in M^2\,;\,
   d_{ds^2}(p,q)<r\right\}
  \qquad (p\in M^2,\,\, r>0).
\end{equation}
To prove Theorem \ref{thm:induced-m},
we prepare the following four lemmas:

\begin{lemma}\label{lemma:induced-1}
 Let $ds^2$ be a $C^r$-differentiable
 Kossowski metric and $d\tau^2$ a 
 $C^\infty$-differentiable
 Riemannian metric on $M^2$
 such that $ds^2<d\tau^2$ on $M^2$
 $($that is,
 $ds^2(\vect{v},\vect{v})< d\tau^2(\vect{v},\vect{v})$
 holds for each $\vect{v}\,(\ne \vect{0}))$.
 Then there exists $\epsilon>0$
 such that 
 \[
    B_{d\tau^2}(p,r)\subset B_{ds^2}(p,r)
 \]
 holds for $r\in (0,\epsilon)$.
\end{lemma}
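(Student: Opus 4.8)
The plan is to deduce the ball inclusion from a pointwise comparison of the two length integrands, so that no genuine estimate is needed beyond the hypothesis itself. First I would observe that the assumption $ds^2(\vect{v},\vect{v})<d\tau^2(\vect{v},\vect{v})$ for every $\vect{v}\ne\vect{0}$ gives, after taking square roots, the inequality $\sqrt{ds^2(\vect{v},\vect{v})}\le\sqrt{d\tau^2(\vect{v},\vect{v})}$ for all tangent vectors $\vect{v}$, with equality only when $\vect{v}=\vect{0}$. Since $ds^2$ is positive semi-definite, the left-hand square root is always real and non-negative, so this comparison makes sense at every point of $M^2$, including the semi-definite points.

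Next, for an arbitrary piecewise smooth arc $\gamma\in\mathcal{P}_{p,q}$, I would apply this pointwise inequality to $\gamma'(t)$ and integrate, obtaining $L_{ds^2}(\gamma)\le L_{d\tau^2}(\gamma)$. Taking the infimum over all such $\gamma$ then yields the monotonicity of the associated pre-distance functions, namely $d_{ds^2}(p,q)\le d_{d\tau^2}(p,q)$, valid for every pair $p,q\in M^2$. From this the desired inclusion is immediate: if $q\in B_{d\tau^2}(p,r)$, then $d_{ds^2}(p,q)\le d_{d\tau^2}(p,q)<r$, so $q\in B_{ds^2}(p,r)$.

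There is essentially no hard step here; the only point requiring a moment's care is that the comparison of integrands must remain valid at semi-definite points of $ds^2$, where $ds^2$ degenerates along its null directions. This causes no trouble, because the hypothesis is imposed for \emph{all} nonzero $\vect{v}$ while $d\tau^2$ is Riemannian, so at a null vector $\vect{v}$ one has $ds^2(\vect{v},\vect{v})=0<d\tau^2(\vect{v},\vect{v})$. Finally, since the monotonicity $d_{ds^2}\le d_{d\tau^2}$ holds with no restriction on scale, the inclusion $B_{d\tau^2}(p,r)\subset B_{ds^2}(p,r)$ in fact holds for every $r>0$; thus any choice of $\epsilon>0$ validates the statement as written.
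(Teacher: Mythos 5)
Your proof is correct and follows essentially the same route as the paper: pointwise comparison of the metrics gives $L_{ds^2}(\gamma)\le L_{d\tau^2}(\gamma)$ for every path, hence $d_{ds^2}\le d_{d\tau^2}$, and the ball inclusion is immediate. Your closing remark that the inclusion holds for every $r>0$ (so the $\epsilon$ in the statement is superfluous) is also implicit in the paper's argument, which uses no restriction on $r$.
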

\begin{proof}
 Since $ds^2<d\tau^2$, $L_{ds^2}(\gamma)\le L_{d\tau^2}(\gamma)$ holds
 for each path $\gamma\in \P_{p,q}$ 
 between two points $p,q\in M^2$.
 So we obtain
 $d_{ds^2}(p,q)\le d_{d\tau^2}(p,q)$
 and
 \[
    B_{d\tau^2}(p,r)
          =\{q\,;\, d_{d\tau^2}(p,q)<r\}
         \subset \{q\,;\, d_{ds^2}(p,q)<r\}
        =B_{ds^2}(p,r).
 \]
\end{proof}

\begin{lemma}\label{lemma:induced-2}
 Let $ds^2$ be a $C^r$-differentiable
 Kossowski metric on $M^2$ and $p\in M^2$
 an $A_2$ semi-definite point. 
 Suppose that 
 $U$  is an open neighborhood of $p$.
 Then there exists $\delta>0$
 such that $B_{ds^2}(p,\delta)\subset U$.
\end{lemma}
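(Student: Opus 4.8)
The plan is to work in the K-orthogonal coordinate system provided by Proposition~\ref{prop:K-coord}, reduce the inclusion $B_{ds^2}(p,\delta)\subset U$ to a uniform lower bound for the length of any path joining $p$ to the boundary of a small coordinate rectangle, and then obtain that bound by estimating the $u$- and $v$-contributions to the arc length separately. First I would choose K-orthogonal coordinates $(u,v)$ centered at $p$ whose $u$-axis is the characteristic curve (possible since $p$ is an $A_2$ point), so that by Theorem~\ref{thm:coord} one has $ds^2=\rho^2\,du^2+(\lambda/\rho)^2\,dv^2$ with $\rho=e^{v^2 h}$ and $\lambda=v\,e^{k}$ for some germs $h,k\in C^r_0(\R^2)$. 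After shrinking $U$ I may assume it lies in this chart, and I fix a closed rectangle $\overline V=\{|u|\le a,\ |v|\le b\}\subset U$. Because $\rho(u,0)=1$ and both $\rho$ and $e^{k-v^2 h}$ are continuous and positive, on $\overline V$ there are constants $c_0,c_1>0$ with $\rho\ge c_0$ and $e^{k-v^2 h}\ge c_1$; hence $|\lambda/\rho|=|v|\,e^{k-v^2 h}\ge c_1|v|$ throughout $\overline V$.

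Next I would bound below the length of the initial arc $\gamma|_{[0,t_1]}$ of any path $\gamma\in\mathcal P_{p,q}$, where $t_1$ is the first time $\gamma$ meets $\partial V$ (so $\gamma([0,t_1])\subset\overline V$). If the exit point $\gamma(t_1)$ lies on a vertical side $|u|=a$, then $L_{ds^2}(\gamma|_{[0,t_1]})\ge\int\rho\,|u'|\,dt\ge c_0\int|u'|\,dt\ge c_0\,a$. If instead it lies on a horizontal side $|v|=b$, then using $v(0)=0$,
\[
L_{ds^2}(\gamma|_{[0,t_1]})\ge\int|\lambda/\rho|\,|v'|\,dt\ge c_1\int|v|\,|v'|\,dt\ge \frac{c_1}{2}\,\bigl|v(t_1)^2-v(0)^2\bigr|=\frac{c_1}{2}\,b^2.
\]
Setting $\delta:=\min\{c_0\,a,\ c_1 b^2/2\}>0$, I would then conclude: for any $q\notin U$ we have $q\notin\overline V$, so every $\gamma\in\mathcal P_{p,q}$ must exit $V$, and its initial segment up to $\partial V$ already has length at least $\delta$; taking the infimum gives $d_{ds^2}(p,q)\ge\delta$. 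This is exactly $B_{ds^2}(p,\delta)\subset U$.

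The point requiring care---and the only place where the $A_2$ hypothesis genuinely enters---is the transverse estimate, since the metric degenerates along the singular curve $v=0$. Naively the vanishing of the coefficient $\lambda/\rho$ on the $u$-axis might allow arbitrarily cheap transverse travel and collapse the ball; what rescues the argument is that $\lambda$ vanishes to \emph{exactly} first order in $v$ (equivalently $\hat\lambda(0,0)\ne0$ by non-degeneracy), so the cost of reaching $|v|=b$ is still bounded below quadratically by $c_1 b^2/2>0$. The other ingredient that makes these one-variable estimates legitimate is the decoupling of the two directions: the K-orthogonal coordinates eliminate the cross term, so that $L_{ds^2}(\gamma)\ge\int\rho|u'|\,dt$ and $L_{ds^2}(\gamma)\ge\int|\lambda/\rho|\,|v'|\,dt$ both hold, letting me treat the vertical and horizontal exits independently.
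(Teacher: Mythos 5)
Your proof is correct, and its skeleton is the same as the paper's: {\rm K}-orthogonal coordinates centered at $p$ with the characteristic curve as the $u$-axis, a small closed coordinate rectangle inside $U$, and a dichotomy according to which side of the rectangle a path from $p$ first exits, with the horizontal case handled identically (a positive lower bound for $\sqrt{E}$ times the width). The one genuine difference is how the degenerate transverse direction is treated. The paper does not use the vanishing order of $\lambda$ at all: it introduces the compact strip $S=\{(u,v)\in\Omega \,;\, |v|\ge \epsilon_2/2\}$, which avoids the singular axis, notes that $m_2:=\min_S\sqrt{G}>0$ there, and observes that any path reaching $|v|=\epsilon_2$ must cross this strip of width $\epsilon_2/2$, giving the bound $m_2\epsilon_2/2$. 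You instead invoke the representation formula of Theorem \ref{thm:coord} to get the exact first-order vanishing $\sqrt{G}=|\lambda/\rho|=|v|\,e^{k-v^2h}\ge c_1|v|$, and then conclude with the quadratic estimate $\int|v|\,|v'|\,dt\ge \tfrac12\bigl|v(t_1)^2-v(0)^2\bigr|=\tfrac12 b^2$. Both are valid. The paper's version is slightly more economical in its hypotheses---it needs only that the semi-definite set inside the rectangle is the $u$-axis, so that $G>0$ off that axis, and this is also the form of argument that generalizes to peaks in Lemma \ref{lemma:induced-a3}---whereas your version makes the quantitative dependence $\delta\sim c_1b^2/2$ explicit and pinpoints non-degeneracy ($\hat\lambda(0,0)\ne 0$, i.e.\ exactly first-order vanishing of $\lambda$) as the precise reason transverse travel cannot be arbitrarily cheap.
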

\begin{proof}
 We can take a K-orthogonal coordinate
 neighborhood $(V;u,v)$ centered at $p$
 satisfying the following properties
 (cf.\ Theorem \ref{thm:coord}):
 \begin{enumerate}
  \item the closure $\overline{V}$ of $V$ is a subset of $U$,
  \item there exist $C^r$-functions
	$E$ and $G$ on $\overline{V}$ such that
	$ds^2=E\,du^2+G\,dv^2$,
  \item $E>0$ on $\overline{V}$,
  \item the set of semi-definite points of $ds^2$ coincides with the $u$-axis.
 \end{enumerate}
 We set
 $\Omega:=\{(u,v)\in V\,;\, |u|\le \epsilon_1, \,\, |v|\le \epsilon_2 \}$
 for sufficiently small
 $\epsilon_i$ ($i=1,2$) and
 \[
     S:=\{(u,v)\in \Omega\,;\, |v|\ge \epsilon_2/2 \}. 
 \]
 Then
 \[
    m_1:=\min_{\Omega}(\sqrt{E}),\qquad
    m_2:=\min_{S}(\sqrt{G})
 \]
 are positive.
 Suppose  $q=(u_0,v_0)$ lies on the boundary of
 $\Omega$,
 and take $\gamma\in \P_{p,q}$.
 If $|u_0|=\epsilon_1$, the path $\gamma$ travels
 horizontally across   the left or right half of $\Omega$, 
 and so one can easily show that 
 $L(\gamma)>m_1\epsilon_1$. 
 Similarly if $|v_0|=\epsilon_2$
 then $\gamma$ travels vertically across  one 
 of the closed rectangular sub-domains of $S$, and so  
 $L(\gamma)>m_2\epsilon_2/2$.
 Thus, if we set
 \[
      m_0:=\min\biggl(m_1\epsilon_1,\,\,\frac{m_2\epsilon_2}2\biggr),
 \]
 then 
 $B_{ds^2}(p,m_0)\subset \Omega \subset U$,
 proving the assertion.
\end{proof}

\begin{lemma}\label{lem:A2thm}
 Let $ds^2$ be a $C^r$-differentiable
 Kossowski metric on $M^2$ whose semi-definite points
 are all of type $A_2$. Then the 
 pre-distance function 
 associated with $ds^2$ gives a distance function
 which is compatible with the topology of $M^2$.
\end{lemma}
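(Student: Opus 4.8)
The plan is to prove simultaneously that $d_{ds^2}$ separates points (hence is a genuine distance) and that the balls $B_{ds^2}(p,r)$ form a neighborhood basis for the manifold topology. Both conclusions follow once I establish the following \emph{key claim}: for every $p\in M^2$ and every open neighborhood $U$ of $p$ in the manifold topology, there exists $\delta>0$ with $B_{ds^2}(p,\delta)\subset U$.

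To prove the key claim I would split into two cases according to whether $p$ is regular or semi-definite. If $p$ is a semi-definite point, then by hypothesis it is of type $A_2$, and the claim is exactly the content of Lemma \ref{lemma:induced-2}. If $p$ is a regular point, then $ds^2$ is positive definite on a neighborhood of $p$; shrinking to a relatively compact coordinate ball $W\subset U$ on which $ds^2$ remains positive definite, I would fix an auxiliary Riemannian metric $d\tau^2$ and use compactness of $\overline{W}$ to obtain a constant $c>0$ with $ds^2\ge c\,d\tau^2$ there. This yields $L_{ds^2}(\gamma)\ge\sqrt{c}\,L_{d\tau^2}(\gamma)$ for paths in $W$, so for $\delta$ small enough (namely $\delta\le\sqrt{c}\,\operatorname{dist}_{d\tau^2}(p,\partial W)$) any path issuing from $p$ with $ds^2$-length below $\delta$ cannot reach $\partial W$ and therefore stays inside $W\subset U$; hence $B_{ds^2}(p,\delta)\subset U$. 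This regular-point half is the main obstacle, precisely because the degeneracy of $ds^2$ at semi-definite points arbitrarily close to $p$ rules out a single global lower bound; localizing to $\overline{W}$ and using the length estimate to confine short paths is what resolves it.

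Next I would establish the reverse comparison globally. Choosing any Riemannian metric $g$ and setting $d\tau^2:=ds^2+g$, one has $ds^2<d\tau^2$ everywhere, so Lemma \ref{lemma:induced-1} gives $B_{d\tau^2}(p,r)\subset B_{ds^2}(p,r)$ for every $p$ and every $r>0$. Consequently each $ds^2$-ball contains a manifold-open neighborhood of each of its points: given $q\in B_{ds^2}(p,r)$, set $r':=r-d_{ds^2}(p,q)>0$, so that by the triangle inequality and the inclusion above $B_{d\tau^2}(q,r')\subset B_{ds^2}(q,r')\subset B_{ds^2}(p,r)$, and $B_{d\tau^2}(q,r')$ is manifold-open about $q$. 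Thus every $ds^2$-ball is manifold-open. Combined with the key claim---which shows conversely that every manifold-open set contains an $ds^2$-ball about each of its points and is therefore $ds^2$-open---this proves that the metric topology of $d_{ds^2}$ coincides with the manifold topology, i.e.\ the two are compatible.

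Finally, positivity of $d_{ds^2}$ follows formally. Given $p\ne q$, the manifold topology is Hausdorff, so there is a manifold-neighborhood $U$ of $p$ with $q\notin U$; the key claim then supplies $\delta>0$ with $B_{ds^2}(p,\delta)\subset U$, whence $q\notin B_{ds^2}(p,\delta)$ and $d_{ds^2}(p,q)\ge\delta>0$. Since $d_{ds^2}$ is already symmetric and satisfies the triangle inequality by definition, this makes it a genuine distance function, and the topological compatibility established above completes the proof.
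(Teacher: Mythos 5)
Your proof is correct and follows essentially the same route as the paper's: both arguments rest on Lemma \ref{lemma:induced-2} for the semi-definite ($A_2$) points, on the comparison Lemma \ref{lemma:induced-1} against an auxiliary Riemannian metric for the reverse inclusion of balls, and on the Hausdorff property for positivity of $d_{ds^2}$. The differences are organizational rather than substantive --- you work with one global comparison metric $d\tau^2=ds^2+g$ and prove manifold-openness of every ball $B_{ds^2}(p,r)$ at arbitrary interior points via the triangle inequality, and you spell out the compactness estimate $ds^2\ge c\,d\tau^2$ on $\overline{W}$ that settles the regular-point case, which the paper's proof leaves as \lq easily checked\rq.
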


\begin{proof}
 We take two distinct points $p,q\in M^2$.
 To prove $d_{ds^2}$ is a distance function,
 it is sufficient to show that $p\ne q$ implies
 $d_{ds^2}(p,q)\ne 0$.
 Since $M^2$ is a Hausdorff space,
 we can take a  local coordinate neighborhood 
 $(U;u,v)$ of $p$ satisfying
 $q\not\in U$.
 By Lemma \ref{lemma:induced-2},
 there exists $r>0$
 such that $B_{ds^2}(p,r)\subset U$.
 Then $d_{ds^2}(p,q)\ge r$ holds.
 So $d_{ds^2}$ is a distance function. 

 We next show that $d_{ds^2}$ is compatible with
 the topology of $M^2$.
 We fix a  point $p$ of $M^2$ arbitrarily,
 and take a local coordinate neighborhood 
 $(U;u,v)$ centered at $p$
 so that $\overline{U}$ is compact.
 Let $d\tau^2$ be the canonical Euclidean metric on 
 the $uv$-plane. 
 Suppose that $p$ is a regular point of $ds^2$.
 Then it can be easily checked that
 for each $r>0$, there exists $r'>0$
 such that
 $B_{ds^2}(p,r')$
 (resp.\ $B_{d\tau^2}(p,r')$)
 is a subset of
 $B_{d\tau^2}(p,r)$
 (resp. $B_{ds^2}(p,r)$).
 So we consider the case that 
 $p$ is a semi-definite point of $ds^2$.
 By Lemma \ref{lemma:induced-2},
 we have
 \[
       B_{ds^2}(p,r)\subset U.
 \]
 For sufficiently small $r>0$,
 we can take a positive constant
 $m$ such that $ds^2<m \, d\tau^2$ on $\overline{U}$.
 We set $M^2=U$ and apply
 Lemma \ref{lemma:induced-1}.
 Then
 \[
    B_{d\tau^2}(p,r/\sqrt{m})\subset B_{ds^2}(p,r)(\subset U)
 \]
 holds for sufficiently small $r>0$.
 On the other hand, applying Lemma \ref{lemma:induced-2}
 again, there exists $r'>0$
 such that $B_{ds^2}(p,r')\subset B_{d\tau^2}(p,r)$.
 Thus, the topology induced by $d_{ds^2}$ is
 the same as that of $M^2$ as a manifold. 
\end{proof}

\begin{lemma}\label{lemma:induced-a3}
 Let $ds^2$ be a $C^r$-differentiable
 Kossowski metric on $M^2$, and let $p\in M^2$ be
 a peak. 
 Suppose that 
 $U$  is an open neighborhood of $p$.
 Then there exists $\delta>0$
 such that $B_{ds^2}(p,\delta)\subset U$.
\end{lemma}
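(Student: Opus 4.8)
The plan is to mimic the proof of Lemma~\ref{lemma:induced-2}, but the essential new difficulty is that at a general peak the characteristic curve is tangent to the null direction, so its $ds^2$-speed vanishes at $p$ and the metric degenerates along it; I must show that paths which try to escape by hugging the characteristic curve still have length bounded below. First I would apply Proposition~\ref{prop:K-coord} to a regular curve $\gamma$ through $p$ whose velocity is not a null vector (at a non-$A_2$ peak the characteristic curve itself is tangent to the null direction, so it cannot serve as the $u$-axis). This yields a K-orthogonal coordinate system $(U;u,v)$ centered at $p$ with $ds^2=\rho^2\,du^2+(\lambda/\rho)^2\,dv^2$ as in \eqref{cor:normal}, where $\rho(u,0)=1$ and $\rho>0$. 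Since $\partial_v$ is null along the characteristic curve $\Sigma$ and $\Sigma$ is a regular curve through the origin with null tangent there, $\Sigma$ is tangent to the $v$-axis at $p$, hence locally a graph $u=\phi(v)$ with $\phi(0)=\phi'(0)=0$.

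The peak hypothesis is used precisely here: for $v\neq0$ small the point $\sigma(v)$ is an $A_2$ semi-definite point, i.e.\ its tangent $(\phi'(v),1)$ is not null, so $\phi'(v)\neq0$; thus $\phi$ is strictly monotone on each side of $0$ and $\phi(v)\neq0$ for $0<|v|\le\epsilon_2$. Choosing a compact rectangle $\Omega=\{|u|\le\epsilon_1,\ |v|\le\epsilon_2\}\subset U$, I record the bounds $0<m_1\le\rho\le M_1$ on $\overline{\Omega}$ and, since $\lambda_u(0,0)\neq0$ and $\lambda$ vanishes exactly on $\Sigma$, the Hadamard factorization $\lambda=(u-\phi(v))a(u,v)$ with $|a|\ge c_0>0$, so that $|\lambda|\ge c_0|u-\phi(v)|$ on $\Omega$. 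If a path $\gamma$ from $p$ to $\partial\Omega$ meets $\{|u|=\epsilon_1\}$, the horizontal estimate $L(\gamma)\ge\int\rho|du|\ge m_1\epsilon_1$ applies verbatim as in Lemma~\ref{lemma:induced-2}.

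The genuinely new case is a path reaching $\{|v|=\epsilon_2\}$ while staying in $\{|u|\le\epsilon_1\}$. Reducing to a graph $u=u(v)$ over $[0,\epsilon_2]$ and writing $w(v):=u(v)-\phi(v)$ with $w(0)=0$, I would use $\sqrt{a^2+b^2}\ge(|a|+|b|)/\sqrt2$ together with the bounds above to obtain $\sqrt2\,L(\gamma)\ge J$, where $J:=\int_0^{\epsilon_2}\bigl(m_1|w'+\phi'|+\tfrac{c_0}{M_1}|w|\bigr)\,dv$. Setting $\alpha:=\int_0^{\epsilon_2}|w'+\phi'|\,dv$ and $\beta:=\int_0^{\epsilon_2}|w|\,dv$, the identity $w(v)+\phi(v)=\int_0^v(w'+\phi')$ gives the pointwise bound $|w(v)|\ge|\phi(v)|-\alpha$, hence $\beta\ge P-\alpha\epsilon_2$ with $P:=\int_0^{\epsilon_2}|\phi(v)|\,dv>0$. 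Therefore $J=m_1\alpha+\tfrac{c_0}{M_1}\beta\ge\tfrac{c_0}{M_1}P+\alpha\bigl(m_1-\tfrac{c_0}{M_1}\epsilon_2\bigr)$, and shrinking $\epsilon_2$ so that $\tfrac{c_0}{M_1}\epsilon_2\le m_1$ makes the coefficient of $\alpha$ nonnegative, yielding $L(\gamma)\ge\tfrac{1}{\sqrt2}\tfrac{c_0P}{M_1}>0$. Combining the two cases, $\delta:=\min\bigl(m_1\epsilon_1,\ \tfrac{c_0P}{\sqrt2\,M_1}\bigr)$ satisfies $B_{ds^2}(p,\delta)\subset\Omega\subset U$.

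The main obstacle is exactly this vertical-exit estimate: near $\Sigma$ the coefficient $\lambda/\rho$ of $dv^2$ degenerates, so a path can make the $dv$-term cheap only by tracking the characteristic curve, and then the $du$-term is forced to pay because $\phi'\neq0$ off the origin. The tension between the two terms is resolved quantitatively by the competition between $\alpha$ and $\beta$, and by the positivity of $P$ — which is where the peak hypothesis is indispensable, since without it $\Sigma$ could be tangent to the null direction along a whole arc, forcing $P=0$ (as in the non-peak example $du^2+u^2\,dv^2$, whose $v$-axis has zero length). A minor technical point I would treat separately is the reduction of a general, possibly non-$v$-monotone path to the graph case: this follows by truncating $\gamma$ at its first arrival on $\{|v|=\epsilon_2\}$ and discarding back-and-forth excursions in $v$, which only decreases the length.
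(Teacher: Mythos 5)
Your argument is correct (the technical points you flag are genuinely repairable), but it is a different proof from the paper's. The paper argues softly: since $p$ is a peak, the punctured neighborhood $U\setminus\{q\,;\,|q|\le\epsilon/3\}$ carries only regular points and $A_2$ semi-definite points, so Lemma~\ref{lem:A2thm} applies to it; any path from $p$ to the boundary of the disc $\Omega=\{|q|\le\epsilon\}$ must traverse the annulus $S=\{q\,;\,\epsilon/2\le|q|\le\epsilon\}$, and that traversal already costs at least $m=\inf\{d_{ds^2}(q_1,q_2)\,;\,|q_1|=\epsilon,\,|q_2|=\epsilon/2\}>0$. Thus positivity of the distance at the peak is inherited from the previously established $A_2$ case \emph{away} from the peak, with no coordinate computation at $p$ itself, and $A_2$ and non-$A_2$ peaks are treated uniformly. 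You instead work directly at the peak in {\rm K}-orthogonal coordinates and prove a quantitative bound: the Hadamard factorization $\lambda=(u-\phi(v))a$, the pointwise estimate $|w|\ge|\phi|-\alpha$, and the competition between $m_1\alpha$ and $(c_0/M_1)(P-\alpha\epsilon_2)$ give the explicit radius $\delta=\min\bigl(m_1\epsilon_1,\ c_0P/(\sqrt2\,M_1)\bigr)$, the peak hypothesis entering exactly through $\phi'\ne0$ off the origin, hence $P>0$. Your route is longer but self-contained (it never invokes Lemma~\ref{lem:A2thm}) and exhibits which local data of the metric control $\delta$; the paper's is shorter but non-quantitative, the constant $m$ coming from a compactness argument.

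Three small repairs to your version. First, your structural setup (characteristic curve tangent to the $v$-axis, hence a graph $u=\phi(v)$) presumes $p$ is \emph{not} of type $A_2$; state explicitly that $A_2$ peaks are exactly the case of Lemma~\ref{lemma:induced-2}, so only non-$A_2$ peaks need the new estimate. Second, the reduction to a graph is most cleanly avoided altogether: for an arbitrary path $(u(t),v(t))$ set $\alpha:=\int_0^1|u'(t)|\,dt$, note $|u(t)|\le\alpha$, and use the change-of-variables inequality $\int_0^1 g(v(t))\,|v'(t)|\,dt\ge\int_0^{\epsilon_2}g(v)\,dv$ for the nonnegative function $g:=\max(|\phi|-\alpha,0)$; this yields your bound $J\ge c_0P/M_1$ for all paths, with no monotonization or discarding of excursions. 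Third, treat the exit through $v=-\epsilon_2$ symmetrically (replace $P$ by the minimum of $\int_0^{\epsilon_2}|\phi|\,dv$ and $\int_{-\epsilon_2}^0|\phi|\,dv$), and record the one-line justification that $\lambda_u(0,0)\ne0$: since $\lambda$ vanishes along $\Sigma$ and $\sigma'(0)=\partial_v$, one has $\lambda_v(0,0)=0$, so \eqref{eq:k2} forces $\lambda_u(0,0)\ne0$.
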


\begin{proof}
 We can take a local coordinate
 neighborhood $(V;u,v)$ centered at $p$
 such that
 the closure $\overline{V}$ of $V$ is a subset of $U$.
 Take a sufficiently small $\epsilon>0$, and set  
 \[
     \Omega:=\{q\in V\,;\, |q|\le \epsilon  \},
 \]
 where $|(u,v)|:=\sqrt{u^2+v^2}$.
 Consider the subset $S$ of $\Omega$ defined by
 \[
   S:=\{q\in \Omega\,;\, |q|\ge \epsilon/2 \},
 \]
 and set
 \[
    m:=\inf\{d_{ds^2}(q_1,q_2)\,;\, |q_1|=\epsilon,\,\,|q_2|=\epsilon/2\}.
 \]
 Since $U\setminus \{q\in \Omega\,;\, |q|\le\epsilon/3\}$ 
 admits only regular points or $A_2$ semi-definite points,
 Lemma \ref{lem:A2thm} yields that $m>0$.
 Suppose that $q=(u_0,v_0)$ lies on the boundary
 of $\Omega$,
 and take $\gamma\in \P_{p,q}$.
 Since the path $\gamma$ travels
 across  $S$, we have
 $L_{ds^2}(\gamma)>m$. 
 Thus, we have
 $B_{ds^2}(p,m)\subset \Omega \subset U$,
 proving the assertion.
\end{proof}
\begin{proof}[Proof of Theorem \ref{thm:induced-m}]
 If we use Lemma
 \ref{lemma:induced-a3}
 instead of 
 Lemma \ref{lemma:induced-2},
 the same argument
 as in the proof of Lemma \ref{lem:A2thm}
 gives the proof of
 Theorem \ref{thm:induced-m}. 
\end{proof}

\section{Coherent tangent bundles induced by Kossowski metrics}
\label{sec3}

In this section, we deduce the partial differential equation
given in Kossowski \cite{Kossowski},
using the fact (shown in \cite{HHNSUY}) 
that a Kossowski metric induces an associated vector 
bundle with a metric and a connection, 
called a ``coherent tangent bundle''.

\subsection{Fundamental theorem for frontals}
Let $\E$ be  a vector bundle of
rank $2$ over a $2$-manifold $M^2$, and
$\inner{\,}{\,}$ an inner product on $\E$.
We let $\nabla$ be a connection on $\E$
which is compatible with respect to the
inner product.
If a vector bundle homomorphism
$\phi:TM^2 \to \E$
which induces the identity map on $M^2$
satisfies the identity
\begin{equation}\label{eq:Codazzi}
   \nabla_X \varphi (Y) - \nabla_Y \varphi (X) =\varphi([X,Y]) 
  \qquad(X,Y\in \X^r),
\end{equation}
then we call $(\E,\inner{\,}{\,},\nabla,\phi)$
a \emph{coherent tangent bundle} over $M^2$,
where $\X^r$ is the set of $C^r$-vector fields on $M^2$.
(This definition can be generalized for $n$-dimensional 
manifolds, cf.\ \cite{SUY_msj}.)
In this situation, the pull-back metric of $\inner{\,}{\,}$
via $\phi$,
\[
    ds^2:=\phi^*\inner{\,}{\,}
\]
is induced, which is called the \emph{first fundamental form}
of $\phi$. 
A point $p$ where $\phi_p:T_pM^2\to \E_p$ 
has a non-trivial kernel corresponds to 
a semi-definite point of $ds^2$.

\begin{definition}\label{def:coherent-isom}
 Two coherent tangent bundles on $M^2$
 \[ 
     (\E_1,\inner{~}{~}_1,\nabla^1,\varphi_1),\qquad
      (\E_2,\inner{~}{~}_2,\nabla^2,\varphi_2)
 \]
 are said to be \emph{isomorphic}
 if there exists a bundle isomorphism
 $\iota\colon{}\E_1\to \E_2$
 satisfying the following three conditions:
 \begin{itemize}
  \item $\varphi_2=\iota\circ\varphi_1$,  
  \item $\iota$ preserves the inner products, that is,
	for each $p\in M^2$
	and for each $\xi$, $\eta\in (\E_1)_{p}$,
	$\inner{\xi}{\eta}_1=\inner{\iota(\xi)}{\iota(\eta)}_2$ 
	holds,
  \item for each
	$\vect{v}\in T_{p}M^2$
	and for each section $\xi$ of $\E_1$,
	$\iota(\nabla^1_{\vect{v}}\xi) = \nabla^2_{\vect{v}}\iota(\xi)$ holds.
 \end{itemize}
 In this situation, $\iota$ is called
 an \emph{isomorphism} between coherent tangent bundles.
\end{definition}

The following assertion holds:
\begin{fact}[{\cite[Theorem 3.1]{HHNSUY}}]
\label{fact:jitugen}
 Let $M^2$ be an oriented $C^\omega$-differentiable $2$-manifold 
 and $ds^2$ a $C^\omega$-differentiable Kossowski metric on $M^2$.
 Then there exists a unique $C^\omega$-differentiable
 coherent tangent bundle
 $(\E,\inner{\,}{\,},\, \nabla,\phi)$
 up to isomorphisms of coherent tangent bundles
 such that the induced metric $\varphi^*\inner{\,}{\,}$
 coincides with $ds^2$.
 Moreover, $\E$ is orientable if and only if $ds^2$ is
 co-orientable {\rm(}see Remark \ref{rem:dA}{\rm)}. 
\end{fact}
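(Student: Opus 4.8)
The plan is to construct $\E$ locally on K-orthogonal charts (Proposition~\ref{prop:K-coord}) and then patch the pieces together, the gluing data being furnished by the uniqueness part. Fix a K-orthogonal coordinate neighborhood $(U;u,v)$ centered at a point $p$, so that by \eqref{cor:normal} one has $ds^2=(\rho\,du)^2+(\rho^{-1}\lambda\,dv)^2$ with $\rho>0$, $\rho(u,0)=1$, and $\lambda$ the signed area density. On $U$ I take the trivial rank-two bundle $\E:=U\times\R^2$ with its standard orthonormal frame $e_1,e_2$ and the inner product $\inner{\,}{\,}$ making $(e_1,e_2)$ orthonormal, and I define $\phi\colon TU\to\E$ by
\[
\phi(\partial_u):=\rho\,e_1,\qquad \phi(\partial_v):=\frac{\lambda}{\rho}\,e_2.
\]
Then $\phi^*\inner{\,}{\,}$ has components $E=\rho^2$, $F=0$, $G=(\lambda/\rho)^2$, so $\phi^*\inner{\,}{\,}=ds^2$ by \eqref{eq:I} and \eqref{cor:normal}. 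It remains to produce a metric connection $\nabla$ satisfying the Codazzi identity \eqref{eq:Codazzi}.

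Because $\nabla$ must be compatible with $\inner{\,}{\,}$, in the frame $(e_1,e_2)$ it is governed by a single connection one-form $\omega=p\,du+q\,dv$ through $\nabla_X e_1=\omega(X)e_2$ and $\nabla_X e_2=-\omega(X)e_1$. Imposing \eqref{eq:Codazzi} on the commuting pair $(\partial_u,\partial_v)$ and matching $e_1$- and $e_2$-components forces
\[
p=-\frac{\rho\,\rho_v}{\lambda},\qquad q=\frac{1}{\rho}\,\partial_u\!\left(\frac{\lambda}{\rho}\right).
\]
The coefficient $q$ is manifestly $C^r$ since $\rho>0$. The crucial point, and the step I expect to be the main obstacle, is that $p$ is also $C^r$: this is exactly where admissibility enters. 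Condition (3) of Definition~\ref{def:Ev} gives $E_v=0$, hence $\rho_v=0$, along the characteristic curve, that is, on the zero set $\{\lambda=0\}$; since $p$ is non-degenerate, $\lambda$ is a submersion there, and the division lemma produces a $C^r$ function $\psi$ with $\rho_v=\lambda\,\psi$, whence $p=-\rho\,\psi$ is $C^r$. (At an $A_2$ point this cancellation is explicit in Theorem~\ref{thm:coord}, where $\rho=e^{v^2h}$ and $\lambda=ve^{k}$.) With $\omega$ so chosen, $\nabla$ is metric by construction and satisfies \eqref{eq:Codazzi} on the frame; since the difference of the two sides of \eqref{eq:Codazzi} is $C^r(M^2)$-bilinear and alternating in $(X,Y)$, its vanishing on $(\partial_u,\partial_v)$ gives it for all vector fields. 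This yields a local coherent tangent bundle inducing $ds^2$.

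For uniqueness, which simultaneously supplies the gluing cocycle, let $(\E_i,\inner{\,}{\,}_i,\nabla^i,\phi_i)$ $(i=1,2)$ both induce $ds^2$. On the regular set each $\phi_i$ is an isomorphism, so $\iota:=\phi_2\circ\phi_1^{-1}$ is defined there and preserves the inner product; the computation above shows $\omega$ is uniquely forced by the metric and \eqref{eq:Codazzi}, so $\iota$ intertwines $\nabla^1$ and $\nabla^2$ as well. In a K-orthogonal chart, normalizing the frames by $e_1^i=\phi_i(\partial_u)/\rho$ and by $\phi_i(\partial_v)=(\lambda/\rho)e_2^i$, one reads off $\iota(e_1^1)=e_1^2$ and, from $\iota\circ\phi_1=\phi_2$ together with continuity across $\{\lambda=0\}$, also $\iota(e_2^1)=e_2^2$; as these are $C^r$ frames, $\iota$ extends $C^r$-differentiably across the semi-definite set and is an isomorphism in the sense of Definition~\ref{def:coherent-isom}. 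Covering $M^2$ by oriented K-orthogonal charts, the transition isomorphisms so obtained satisfy the cocycle condition by uniqueness, and the local bundles glue to a global coherent tangent bundle, unique up to isomorphism.

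Finally, $\E$ is orientable if and only if $\wedge^2\E$ admits a global nowhere-vanishing section $\mu$; normalizing $\mu(e_1,e_2)=1$ on each chart, the pull-back is
\[
\phi^*\mu=\mu\!\left(\rho\,e_1,\tfrac{\lambda}{\rho}e_2\right)du\wedge dv=\lambda\,du\wedge dv=d\hat A
\]
(cf.\ \eqref{eq:dA}). Thus $\E$ admits a global orientation precisely when the signed area element $d\hat A$ is a well-defined $2$-form on $M^2$, which by Remark~\ref{rem:dA} is the co-orientability of $ds^2$. This proves the final clause.
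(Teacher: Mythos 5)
There is no proof in the paper to compare against: the statement is Fact~\ref{fact:jitugen}, which the paper imports from \cite[Theorem 3.1]{HHNSUY} without argument, so your proposal must be judged on its own merits. Judged that way, it is correct, and it is worth noting that your local analysis is essentially the paper's Section~\ref{sec3} run in reverse. The paper \emph{assumes} the coherent tangent bundle exists, normalizes the frame by \eqref{eq:e1}--\eqref{eq:e2}, and then \emph{derives} from \eqref{eq:Codazzi} that the connection coefficients are forced to be \eqref{eq:Christoffel}, i.e.\ $\alpha=-E_v/(2\lambda)$ and $\beta=(2E\lambda_u-\lambda E_u)/(2E^2)$, which with $E=\rho^2$ are exactly your $p$ and $q$; you invert this by taking those forced coefficients as the \emph{definition} of $\nabla$ and verifying that metric compatibility, Codazzi (via tensoriality of its defect), and $\phi^*\inner{\,}{\,}=ds^2$ all hold. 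Your key smoothness step --- that $\rho_v/\lambda$ is $C^r$ because admissibility kills $E_v$ on the semi-definite set while non-degeneracy of $\lambda$ permits division --- is precisely the paper's unnumbered remark following \eqref{eq:chK}, so the step you flagged as the main obstacle is resolved the same way the paper resolves it. The uniqueness-forces-the-cocycle gluing and the computation $\phi^*\mu=\lambda\,du\wedge dv$ identifying orientability of $\E$ with co-orientability of $ds^2$ are sound.

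Two points should be stated more carefully, though neither is a gap in substance. First, in the uniqueness step you ``read off'' $e_2^i$ from $\phi_i(\partial_v)=(\lambda/\rho)\,e_2^i$; this equation cannot \emph{define} $e_2^i$ at semi-definite points, where both sides vanish. You should instead take $e_2^i$ to be a $C^r$ unit section of the orthogonal complement of $e_1^i$ and fix its sign by requiring $\phi_i(\partial_v)=+(\lambda/\rho)\,e_2^i$ on the regular set; that one global sign works on a connected chart is the same continuity/division argument as in Proposition~\ref{prop:lambda}, and is exactly how the paper normalizes $\vect{e}_2$ in Section~\ref{sec3} (``replacing $\vect{e}_2$ by $-\vect{e}_2$ if necessary''). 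Second, your orientability display proves only the direction ``$\E$ orientable $\Rightarrow$ $ds^2$ co-orientable''; the converse requires checking that the locally defined sections $\mu$ with $\mu(e_1,e_2)=1$ agree on chart overlaps, which again follows from density of the regular set together with $\phi^*\mu=d\hat A$ there. Both repairs use the dense-regular-set argument you already rely on elsewhere, so the proof stands once they are made explicit.
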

\begin{remark}
 $\E:=TM^2$ can be considered
 as a coherent tangent bundle
 if $\inner{\,}{\,}$ is a Riemannian metric, 
 $\phi$ is the identity map, and
 $\nabla$ is the Levi-Civita connection.
\end{remark}

\begin{remark}
 Fact \ref{fact:jitugen}
 was applied in  \cite{HHNSUY} to prove 
 two Gauss-Bonnet type formulas for Kossowski metrics.
 Moreover, Kossowski metrics can be defined on higher 
 dimensional manifolds, and
 this fact was generalized for arbitrary dimension
 {\rm(}see {\cite[Theorem 7.9]{SUY_msj}}{\rm)}.
\end{remark}

\begin{definition}[Frontal bundles]
 Suppose that there are two bundle homomorphisms
 $\varphi,\,\psi : TM^2 \rightarrow (\E, \inner{\,}{\,}, \nabla)$
 such that each of them induces a structure of 
 a coherent tangent bundle
 on $(\E, \inner{\,}{\,}, \nabla)$.
 If they satisfy the following compatibility condition
 \begin{equation}\label{eq:symmetric}
  \inner{\varphi(X)}{\psi(Y)}=\inner{\varphi(Y)}{\psi(X)}
  \qquad(X,Y\in \X^r),
 \end{equation}
 then  $(\E, \inner{\,}{\,}, \nabla,\phi,\psi)$
 is called a \emph{frontal bundle}.
\end{definition}
\begin{example}
 Let $f:M^2\to \R^3$ be a frontal,
 and let $\nu:M^2\to S^2$ be its unit normal vector field.
 Then 
 \[
    \E_f:=\{(p,\vect{w})\in M^2\times T\R^3\,;\, 
        \mbox{$\vect{w}\in T_p\R^3$ 
               is perpendicular to $\nu_p$}\}
 \]
 has a structure of a vector bundle 
 of rank $2$ over $M^2$.
 The inner product $\inner{\,}{\,}$ is induced from
 the canonical  inner product of $\R^3$.
 Moreover, taking the tangential component of 
 the Levi-Civita connection of $\R^3$, $\E_f$ has
 a connection $\nabla^f$ which is compatible with 
 the metric $\inner{\,}{\,}$.
 Then the two bundle homomorphisms defined by
 \[
    \phi_f:TM^2\ni \vect{v} 
         \mapsto (\pi(\vect{v}),df(\vect{v}))\in \E_f
 \]
 and
 \[
    \psi_\nu:TM^2\ni \vect{v} \mapsto 
           (\pi(\vect{v}),d\nu(\vect{v}))\in \E_f
 \]
 give a structure of frontal bundle.
 where $\pi:TM^2\to M^2$ is the canonical
 projection.
 We call $(\E_f,\inner{\,}{\,},\nabla^f,\phi_f,\psi_\nu)$ 
 the \emph{frontal bundle induced by} $f$.
 The condition \eqref{eq:Codazzi}
 for $\phi_f$ 
 follows from the fact that $\nabla^f$ can be identified
 with the Levi-Civita connection of $M^2\setminus \Sigma_f$,
 where $\Sigma_f$ is the singular set of $f$.
 On the other hand,
 the condition \eqref{eq:Codazzi}
 for $\psi_\nu$ 
 follows from the fact that $f$ satisfies
 the Codazzi equation on $M^2\setminus \Sigma_f$
 (see \cite[Example 2.2]{SUY_kodai} for details).
\end{example}

Let
$(\E,\inner{\,}{\,},\nabla,\phi)$ 
be a coherent tangent bundle over $M^2$.
We fix a local coordinate neighborhood $(U;u,v)$
on $M^2$ with $U$ chosen so that there is also 
an orthonormal frame field $(\vect{e}_1,\vect{e}_2)$ of
$\E$ on $U$.
Such a $5$-tuple $(U;u,v,\vect{e}_1,\vect{e}_2)$ is called
a \emph{local orthonormal trivialization} of $\E$.
Since
$\nabla$ is compatible with respect to the inner product
$\inner{\,}{\,}$,  
for such a $5$-tuple,
there exists a $1$-form $\theta$ defined on $U$
such that
\begin{equation}\label{eq:D1}
 \nabla_{\vect{v}} \vect{e}_1=-\theta(\vect{v})\vect{e}_2,\quad
  \nabla_{\vect{v}} \vect{e}_2=\theta(\vect{v})\vect{e}_1
  \qquad (\vect{v}\in TU)
\end{equation}
and
$d\theta=K d\hat A$
hold
on the set of regular points on $U$
(cf.\ \cite[(13.15)]{UY-book}).
By continuity, 
$d\theta=\Omega$
holds on $U$, where $\Omega$ is the Euler form
of $ds^2$ (cf.\ \eqref{eq:euler0}).
The following assertion was proved in \cite[Section 2]{SUY_kodai},
which plays a role to realize a given Kossowski metric as
the first fundamental form of a frontal.

\begin{theorem}
\label{fact:realization}
 Let $(\E,\, \inner{\,}{\,},\, \nabla,\phi,\psi)$
 be a $C^\omega$-frontal bundle over a simply-connected
 $C^\omega$-local coordinate neighborhood $(U;u,v)$ of $M^2$. 
 Suppose that the induced metric
 $ds^2:=\phi^*\inner{\,}{\,}$ is a Kossowski 
 metric having the expression 
 as in \eqref{cor:normal}. 
 We fix a point $p\in U$ arbitrarily.
 Suppose that
 \begin{equation}\label{eq:Gauss}
  d\theta=\det
   \pmt{
   A & B \\
   C & D}
    du\wedge dv
 \end{equation}
 holds for the local orthonormal 
 trivialization $(\vect{e}_1,\vect{e}_2)$
 on $(U;u,v)$, 
 where
 \begin{align}\label{eq:ABCD*}
  &A:=\inner{\psi(\partial_u)}{\vect{e}_1},\quad
  C:=\inner{\psi(\partial_u)}{\vect{e}_2},\nonumber \\
  &B:=\inner{\psi(\partial_v)}{\vect{e}_1},\quad
  D:=\inner{\psi(\partial_v)}{\vect{e}_2}.
 \end{align}
 Then there exists a unique
 quadruple $(f,\hat{\vect{e}}_1,\hat{\vect{e}}_2,\nu)$ 
 consisting of a
 $C^\omega$-frontal and 
 an orthonormal frame field along $f$
 such that
 \begin{enumerate}
  \item $f$ is a frontal and $\nu$ is a unit normal
	vector field of $f$,
  \item $f_u\cdot \hat{\vect{e}}_j$ 
	and ${f_v}\cdot \hat{\vect{e}}_j$ 
	$(j=1,2)$ coincide with
	$\inner{\phi(\partial_u)}{\vect{e}_j}$
	and
	$\inner{\phi(\partial_v)}{\vect{e}_j}$,
	respectively,
  \item $\nu_u\cdot \hat{\vect{e}}_j$ 
	and $\nu_v\cdot \hat{\vect{e}}_j$ 
	$(j=1,2)$ coincide with
	$\inner{\psi(\partial_u)}{\vect{e}_j}$
	and
	$\inner{\psi(\partial_v)}{\vect{e}_j}$,
	respectively,
  \item $f(p)=\vect{0}$, and  
	$(\hat{\vect{e}}_1,\hat{\vect{e}}_2,\nu)$ is the identity matrix
	at $p$.
 \end{enumerate}
\end{theorem}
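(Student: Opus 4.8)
The plan is to realize the prescribed frame and map by integrating a first-order linear system whose integrability is exactly the content of the frontal bundle axioms. First I would fix the local orthonormal trivialization $(\vect{e}_1,\vect{e}_2)$ with its connection form $\theta$ as in \eqref{eq:D1}--\eqref{eq:D2}, and look for an $\SO(3)$-valued map $\mathcal{F}=(\hat{\vect{e}}_1,\hat{\vect{e}}_2,\nu)$ on $U$, whose columns are to be the desired frame, satisfying
\[
\partial_u\mathcal{F}=\mathcal{F}\,\Xi_u,\qquad \partial_v\mathcal{F}=\mathcal{F}\,\Xi_v,
\]
where $\Xi_u,\Xi_v$ are the skew-symmetric matrices
\[
\Xi_u=\pmt{0 & -\theta(\partial_u) & -A \\ \theta(\partial_u) & 0 & -C \\ A & C & 0},\qquad
\Xi_v=\pmt{0 & -\theta(\partial_v) & -B \\ \theta(\partial_v) & 0 & -D \\ B & D & 0},
\]
with $A,B,C,D$ as in \eqref{eq:ABCD*}. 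Here the upper-left $2\times2$ block prescribes the infinitesimal rotation of $\hat{\vect{e}}_1,\hat{\vect{e}}_2$ dictated by $\nabla$, while the entries coupling to the third column prescribe the Weingarten data carried by $\psi$.

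The heart of the matter is the Frobenius integrability condition $\partial_v\Xi_u-\partial_u\Xi_v+[\Xi_u,\Xi_v]=0$, which I would verify entry by entry. Its $(1,2)$ component reads $d\theta=(AD-BC)\,du\wedge dv$; since $d\theta=\Omega$ holds on all of $U$ by continuity, this is precisely the hypothesis \eqref{eq:Gauss}. Its $(1,3)$ and $(2,3)$ components read
\[
\partial_v A+C\,\theta(\partial_v)=\partial_u B+D\,\theta(\partial_u),\qquad
\partial_v C-A\,\theta(\partial_v)=\partial_u D-B\,\theta(\partial_u),
\]
which are exactly the two components of $\nabla_{\partial_u}\psi(\partial_v)=\nabla_{\partial_v}\psi(\partial_u)$, namely \eqref{eq:Codazzi} for $\psi$ (using $[\partial_u,\partial_v]=\vect{0}$ together with \eqref{eq:D1}--\eqref{eq:D2} to expand the covariant derivatives). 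Hence the system is integrable; as $U$ is simply-connected, there is a unique solution $\mathcal{F}$ with $\mathcal{F}(p)$ equal to the identity matrix, and skew-symmetry of $\Xi_u,\Xi_v$ keeps $\mathcal{F}$ in $\SO(3)$. This produces $(\hat{\vect{e}}_1,\hat{\vect{e}}_2,\nu)$ realizing (3) and the frame part of (4).

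With the frame in hand I would recover $f$ as a primitive of the $\R^3$-valued $1$-form
\[
\alpha:=P\,du+Q\,dv,\qquad P:=\sum_{j=1,2}\inner{\phi(\partial_u)}{\vect{e}_j}\,\hat{\vect{e}}_j,\quad
Q:=\sum_{j=1,2}\inner{\phi(\partial_v)}{\vect{e}_j}\,\hat{\vect{e}}_j,
\]
whose coefficients lie in the span of $\hat{\vect{e}}_1,\hat{\vect{e}}_2$ and are therefore orthogonal to $\nu$. To see that $\alpha$ is closed I would split $\partial_v P-\partial_u Q$ into tangential and normal parts. The tangential part is the image under $\vect{e}_j\mapsto\hat{\vect{e}}_j$ of $\nabla_{\partial_v}\phi(\partial_u)-\nabla_{\partial_u}\phi(\partial_v)$, which vanishes by \eqref{eq:Codazzi} for $\phi$; the normal part equals $\big(\inner{\phi(\partial_v)}{\psi(\partial_u)}-\inner{\phi(\partial_u)}{\psi(\partial_v)}\big)\nu$, which vanishes by the compatibility condition \eqref{eq:symmetric}. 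Since $U$ is simply-connected, $f(x):=\int_p^x\alpha$ is well-defined and satisfies $f(p)=\vect{0}$.

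It then remains to check the stated properties, which are immediate from the construction. Condition (2) holds by the definition of $\alpha$; since $f_u,f_v$ are $\nu$-orthogonal, $f$ is a frontal with unit normal $\nu$, giving (1), and its first fundamental form reproduces $ds^2$ because $f_u\cdot f_v=\inner{\phi(\partial_u)}{\phi(\partial_v)}=ds^2(\partial_u,\partial_v)$ in the form \eqref{cor:normal}. Uniqueness follows from the uniqueness of the solution of the frame system under the normalization $\mathcal{F}(p)=\id$ together with the uniqueness of the primitive with $f(p)=\vect{0}$. I expect the main obstacle to be purely organizational: arranging the zero-curvature identity so that each matrix entry is matched to precisely one hypothesis --- \eqref{eq:Gauss} for the rotational part, \eqref{eq:Codazzi} for $\psi$ in the normal coupling, and \eqref{eq:Codazzi} for $\phi$ together with \eqref{eq:symmetric} in the closedness of $\alpha$ --- after which the realization and its uniqueness are forced.
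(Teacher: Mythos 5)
In substance your proof is correct and it follows the same route as the paper: the paper's own proof assembles exactly this linear system \eqref{pde:1}--\eqref{pde:3} for the quadruple $(f,\hat{\vect{e}}_1,\hat{\vect{e}}_2,\nu)$ and then simply cites \cite[Section 2]{SUY_kodai} for the fact that the frontal-bundle axioms together with \eqref{eq:Gauss} are its integrability conditions, whereas you carry that verification out explicitly --- \eqref{eq:Gauss} matched to the rotational $(1,2)$-entry, the Codazzi identity \eqref{eq:Codazzi} for $\psi$ to the $(1,3)$ and $(2,3)$-entries, and \eqref{eq:Codazzi} for $\phi$ together with \eqref{eq:symmetric} to the closedness of the $\R^3$-valued $1$-form whose primitive is $f$. (Your frame equations also correctly include the normal components of $(\hat{\vect{e}}_i)_u$ and $(\hat{\vect{e}}_i)_v$, which are suppressed in the paper's \eqref{pde:2}.)

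The one thing you must repair is a sign convention. With the convention you state --- frame vectors as \emph{columns} of $\mathcal{F}$ and right multiplication $\partial_u\mathcal{F}=\mathcal{F}\,\Xi_u$ --- your matrices yield
\[
\nu_u=-A\,\hat{\vect{e}}_1-C\,\hat{\vect{e}}_2,\qquad
(\hat{\vect{e}}_1)_u=\theta(\partial_u)\,\hat{\vect{e}}_2+A\,\nu,
\]
which is the opposite of condition (3) and of \eqref{eq:D1}; moreover, for right multiplication the Frobenius condition reads $\partial_v\Xi_u-\partial_u\Xi_v-[\Xi_u,\Xi_v]=0$, not with the $+$ sign you used, so the system as literally written is in general not even integrable. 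These two slips compensate each other: the entrywise identities you computed are precisely the correct integrability conditions of the system with $-\Xi_u,-\Xi_v$ (equivalently, of the left-multiplication system $\partial_u\mathcal{F}=\Xi_u\mathcal{F}$ in which the frame vectors are the \emph{rows} of $\mathcal{F}$), and it is the solution of that corrected system that satisfies (3), rotates compatibly with $\nabla$, and makes the tangential cancellation in your closedness argument valid --- with your uncorrected frame, the $\hat{\vect{e}}_1$- and $\hat{\vect{e}}_2$-components of $\partial_vP-\partial_uQ$ would acquire the wrong sign relative to what \eqref{eq:Codazzi} for $\phi$ provides, and $d\alpha=0$ would fail. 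So negate your matrices (or transpose the frame matrix); after that single change every computation you wrote, including the uniqueness argument, goes through verbatim.
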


\begin{proof}
 Consider the system of partial differential equations as follows:
 \begin{align}\label{pde:1}
  f_u&=\sum_{j=1,2}\inner{\phi(\partial_u)}{{\vect{e}}_j}\hat{\vect{e}}_j, \quad
  f_v=\sum_{j=1,2}\inner{\phi(\partial_v)}{{\vect{e}}_j}\hat{\vect{e}}_j, \\
  \label{pde:2}
  (\hat{\vect{e}}_i)_u&=
  \sum_{j=1,2}
    \inner{\nabla_{\partial_u} \vect{e}_i}{{\vect{e}}_j}\hat{\vect{e}}_j,
     \,\,\,
  (\hat{\vect{e}}_i)_v=
  \sum_{j=1,2}
    \inner{\nabla_{\partial_v} \vect{e}_i}{{\vect{e}}_j}\hat{\vect{e}}_j
  \quad (i=1,2),\\
  \label{pde:3}
  \nu_u&=A \hat{\vect{e}}_1+C \hat{\vect{e}}_2,\quad
  \nu_v=B \hat{\vect{e}}_1+D \hat{\vect{e}}_2.
 \end{align}
 As shown in \cite[Section 2]{SUY_kodai}, the integrability condition of
 this system follows from the fact that
 $(\E, \inner{\,}{\,}, \nabla,\phi,\psi)$
 is a $C^\omega$-frontal bundle
 satisfying \eqref{eq:Gauss}.
 So we obtain the assertion.
\end{proof}

\begin{remark}
 Theorem \ref{fact:realization} corresponds to the 
 fundamental theorem of surface
 theory for frontals, which is described for a
 local coordinate neighborhood. 
 A global version of this assertion is given in
 \cite[Theorem 2.7]{SUY_kodai}.
\end{remark}

Let $(U;u,v)$ be  
a K-orthogonal coordinate system 
as in  \eqref{cor:normal}
(cf.\ Proposition \ref{prop:K-coord})
with respect to
a Kossowski metric $ds^2$. 
By Fact \ref{fact:jitugen},
there is a bundle homomorphism
\begin{equation}\label{eq:TU}
 \phi:TU \rightarrow \E
\end{equation}
such that $(\E, \inner{\,}{\,}, \nabla,\phi)$
is a coherent tangent bundle satisfying
$ds^2=\phi^*\inner{\,}{\,}$ on $U$. 
Then
\begin{equation}\label{eq:e1}
  \vect{e}_1:=\frac{1}{\rho}\,\varphi(\partial_u)
  \qquad (\rho:=\sqrt{E})
\end{equation}
gives a unit vector at each fiber of $\E$ on $U$.
We then take a local section $\vect{e}_2$ of $\E$
on $U$ such that $(\vect{e}_1,\vect{e}_2)$ consists of an
orthonormal frame field of $\E$ on $U$.
There are $C^\omega$-functions $k,h$ on $U$ such that
$\varphi(\partial_v)=k\vect{e}_1+h \vect{e}_2$.
Since $(u,v)$ is a K-orthogonal coordinate system,
$k$ vanishes identically.
Moreover, we have 
\[
  \lambda^2 E^{-1}=G=\inner{\varphi(\partial_v)}{\varphi(\partial_v)}
      =h^2, 
\]
and we obtain
$h={\lambda}/{\sqrt{E}}$
by replacing $\vect{e}_2$ by $-\vect{e}_2$ if necessary. 
So it holds that
\begin{equation}\label{eq:e2}
  \varphi(\partial_v)
     =\frac{\lambda}{\rho}\,\vect{e}_2.
\end{equation}
We set
\begin{equation}\label{eq:n1}
  \nabla_{\partial_u}\vect{e}_1 = \alpha\, \vect{e}_2,\qquad
  \nabla_{\partial_v}\vect{e}_1 = \beta\, \vect{e}_2,
\end{equation}
where $\alpha,\beta$ are $C^\omega$-functions on $U$.
Then $\theta=-\alpha du-\beta dv$
gives a $1$-form on $U$ satisfying
\eqref{eq:D1}, that is,
\[
  \nabla_{\partial_u}\vect{e}_2 = -\alpha\, \vect{e}_1,\qquad
  \nabla_{\partial_v}\vect{e}_2 = -\beta\, \vect{e}_1.  
\]
\begin{proposition}
 The functions $\alpha$ and $\beta$ are given by
 \begin{equation}\label{eq:Christoffel}
  \alpha = -\frac{ E_v}{2  \lambda },\qquad
   \beta = \frac{2E\lambda_u-\lambda E_u}{2E^2}.
 \end{equation}
\end{proposition}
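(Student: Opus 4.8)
The plan is to compute $\alpha$ and $\beta$ on the dense open set of regular points, where division by $\lambda$ is legitimate, and then to pass to the whole of $U$ by continuity. By the defining relations \eqref{eq:n1} together with the metric-compatibility of $\nabla$, we have $\alpha=\inner{\nabla_{\partial_u}\vect{e}_1}{\vect{e}_2}$ and $\beta=\inner{\nabla_{\partial_v}\vect{e}_1}{\vect{e}_2}$. Substituting $\vect{e}_1=\rho^{-1}\varphi(\partial_u)$ from \eqref{eq:e1} and expanding by the Leibniz rule, the term coming from differentiating the scalar $\rho^{-1}$ is a multiple of $\inner{\varphi(\partial_u)}{\vect{e}_2}=\rho\inner{\vect{e}_1}{\vect{e}_2}=0$, so it drops out. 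Using $\vect{e}_2=(\rho/\lambda)\varphi(\partial_v)$, which follows from \eqref{eq:e2}, the computation reduces to
\begin{equation*}
\alpha=\frac{1}{\lambda}\inner{\nabla_{\partial_u}\varphi(\partial_u)}{\varphi(\partial_v)},\qquad
\beta=\frac{1}{\lambda}\inner{\nabla_{\partial_v}\varphi(\partial_u)}{\varphi(\partial_v)}.
\end{equation*}

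The key observation is that on the regular set the bundle map $\varphi$ is an isometric isomorphism onto $\mathcal E$, and the metric-compatibility of $\nabla$ together with the Codazzi identity \eqref{eq:Codazzi} show that the pulled-back connection $\varphi^{-1}\circ\nabla\circ\varphi$ on $TM^2$ is both metric and torsion-free, hence coincides with the Levi-Civita connection of $ds^2$ there. Consequently $\inner{\nabla_X\varphi(Y)}{\varphi(Z)}=\Theta(X,Y,Z)$ on regular points, so that $\alpha=\lambda^{-1}\Theta(\partial_u,\partial_u,\partial_v)$ and $\beta=\lambda^{-1}\Theta(\partial_v,\partial_u,\partial_v)$. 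Since $[\partial_u,\partial_v]=\vect{0}$, the torsion identity \eqref{eq:2} gives $\Theta(\partial_v,\partial_u,\partial_v)=\Theta(\partial_u,\partial_v,\partial_v)$, which is the more convenient ordering to evaluate.

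Finally I would read off the two $\Theta$-values directly from \eqref{eq:Gamma}, using $F=0$ (so $\inner{\partial_u}{\partial_v}=0$) and the vanishing of all Lie brackets: this yields $\Theta(\partial_u,\partial_u,\partial_v)=-E_v/2$ and $\Theta(\partial_u,\partial_v,\partial_v)=G_u/2$. Then $\alpha=-E_v/(2\lambda)$ is immediate, while differentiating $G=\lambda^2/E$ gives $G_u=(2E\lambda\lambda_u-\lambda^2 E_u)/E^2$, whence $\beta=G_u/(2\lambda)=(2E\lambda_u-\lambda E_u)/(2E^2)$, as claimed; the identities then extend to all of $U$ by continuity. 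The only step needing genuine care is the identification $\inner{\nabla_X\varphi(Y)}{\varphi(Z)}=\Theta(X,Y,Z)$ at regular points, that is, the fact that the coherent-tangent-bundle connection restricts to the Levi-Civita connection away from the semi-definite set; once this is established, everything else is a routine application of \eqref{eq:Gamma}.
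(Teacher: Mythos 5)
Your proof is correct, but it takes a genuinely different route from the paper's. The paper's own argument is a short, direct application of the Codazzi identity \eqref{eq:Codazzi} to the coordinate fields: since $[\partial_u,\partial_v]=\vect{0}$, one has $\nabla_{\partial_u}\phi(\partial_v)=\nabla_{\partial_v}\phi(\partial_u)$; expanding both sides via $\phi(\partial_u)=\rho\,\vect{e}_1$, $\phi(\partial_v)=(\lambda/\rho)\,\vect{e}_2$ and \eqref{eq:n1}, and comparing the $\vect{e}_1$- and $\vect{e}_2$-components gives the two scalar identities $\lambda\alpha/\rho+\rho_v=0$ and $(\lambda/\rho)_u-\rho\beta=0$, which are exactly \eqref{eq:Christoffel}. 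That computation is pointwise and valid on all of $U$, including the semi-definite set, so no density or continuity argument is needed. You instead identify, on the regular locus, the pulled-back connection $\varphi^{-1}\circ\nabla\circ\varphi$ with the Levi-Civita connection of $ds^2$ (metric compatibility of $\nabla$ gives metricity, \eqref{eq:Codazzi} gives torsion-freeness), hence $\inner{\nabla_X\varphi(Y)}{\varphi(Z)}=\Theta(X,Y,Z)$ there, and then read off $\alpha,\beta$ as Christoffel-type quantities from \eqref{eq:Gamma}. This is correct and conceptually illuminating: it makes explicit that $\alpha,\beta$ are intrinsic data of $ds^2$ encoded by the Kossowski pseudo-connection, a fact the paper never states in this form. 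The price is the final passage to all of $U$: as written, your identities involve division by $\lambda$, so the continuity argument should be phrased in cleared-denominator form ($2\lambda\alpha=-E_v$ and $2E^2\beta=2E\lambda_u-\lambda E_u$ hold on the dense regular set, hence on $U$); the quotient form of the $\alpha$-formula at semi-definite points then makes sense only because $E_v$ vanishes there, as the paper notes in the remark following the proposition. With that precaution made explicit, your proof is complete; the paper's is shorter and uniform across regular and semi-definite points, while yours buys the structural link between $\nabla$, the Levi-Civita connection, and $\Theta$.
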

\begin{proof}
 By \eqref{eq:Codazzi}, we have that
 \begin{align*}
  0
  &=\nabla_{\partial_u} \phi(\partial_v)-\nabla_{\partial_v} \phi(\partial_u)
  =\nabla_{\partial_u} \left(\frac{\lambda}{\rho}\vect{e}_2\right)
  -\nabla_{\partial_v} (\rho\vect{e}_1)\\
  &=
  \left(\frac{\lambda}{\rho}\right)_u \vect{e}_2+
  \frac{\lambda}{\rho}(-\alpha \vect{e}_1)
  -\rho_v\vect{e}_1-\rho(\beta \vect{e}_2)\\
  &=
  \left(\left(\frac{\lambda}{\rho}\right)_u-\rho \beta\right)\vect{e}_2
  -
  \left(
  \frac{\lambda\alpha}{\rho}+\rho_v
  \right)\vect{e}_1.
 \end{align*}
 Thus, we obtain
 \[
    \left(\frac{\lambda}{\rho}\right)_u-\rho \beta=0,\qquad
    \frac{\lambda\alpha}{\rho}+\rho_v=0.
 \]
 Since $\rho=\sqrt{E}$,
 these are equivalent to 
 \eqref{eq:Christoffel}.
\end{proof}

It is well-known that the Gaussian curvature $K$
defined at regular points of $ds^2$ on $U$ satisfies 
(cf.\ \cite[(13.15)]{UY-book})
\[
  K \lambda du\wedge dv=d\theta=(\alpha_v-\beta_u)du\wedge dv.
\]
So it holds that
\begin{equation}\label{eq:chK}
 K \lambda=\alpha_v-\beta_u.
\end{equation}

\begin{remark}
 By  \eqref{eq:admissible}, 
 $E_v$ vanishes on the semi-definite set
 of the metric. So $E_v/\lambda$ is a $C^\omega$-function on $U$.
\end{remark}

We would like to find a new bundle homomorphism
$\psi:TU\to \E$
so that $(\E, \inner{\,}{\,}, \nabla,\phi,\psi)$
is a frontal bundle.
For this purpose, let $A,B,C,D$ be unknown functions
satisfying
\begin{equation}\label{eq:d-nu}
  \psi(\partial_u) = A\,\vect{e}_1+ C\,\vect{e}_2,\qquad
  \psi(\partial_v) = B\,\vect{e}_1+ D\,\vect{e}_2.
\end{equation}

\begin{proposition}
 \label{lem:compatibility}
 In this setting, the following assertions hold:
 \begin{enumerate}
  \item $(\E, \inner{\,}{\,}, \nabla,\phi,\psi)$
	on a simply-connected domain 
	$(U;u,v)$ is a frontal bundle if and only if
	$A,B,C,D$ satisfy
        \begin{align}
	 \label{eq:Codazzi_1}\tag{Cod}
	 &B_u-A_v  =  \alpha D - \beta C,\qquad
	 D_u-C_v  =  \beta A - \alpha B,\\
	 \label{eq:symm}\tag{Symm}
	 &EB = \lambda C.
	\end{align}
  \item The integrability condition
	\eqref{eq:Gauss} is equivalent to the
	condition
	\begin{equation}\label{eq:egregium}\tag{Gauss}
	 AD - BC =  \check{K},
	\end{equation}
	where
	$\check{K}:=\alpha_v-\beta_u(=K\lambda) 
	$ {\rm(}cf.\ \eqref{eq:chK}{\rm)}.
  \item There exists a
	$C^\omega$-differentiable frontal $f:U\to \R^3$ 
	whose first fundamental form
	is $ds^2$ if there exist $C^\omega$-functions
	$A,B,C,D$ satisfying
	\eqref{eq:Codazzi_1}, 
	\eqref{eq:symm}, \eqref{eq:egregium}
	and \eqref{eq:ABCD*}.
 \end{enumerate}
\end{proposition}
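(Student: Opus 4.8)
The plan is to verify the three assertions by unwinding the definition of a frontal bundle against the explicit orthonormal frame $(\vect{e}_1,\vect{e}_2)$ fixed in \eqref{eq:e1}--\eqref{eq:e2}, and then to invoke the realization result of Theorem \ref{fact:realization}.

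For (1), recall that $(\mathcal{E},\inner{\,}{\,},\nabla,\phi,\psi)$ is a frontal bundle precisely when $\psi$ induces a coherent tangent bundle structure, i.e.\ satisfies the identity \eqref{eq:Codazzi}, and the compatibility condition \eqref{eq:symmetric} holds. Since both of these are bilinear in their arguments, it suffices to check them on the coordinate frame. Taking $X=\partial_u$, $Y=\partial_v$ (so that $[\partial_u,\partial_v]=\vect{0}$), I would compute $\nabla_{\partial_u}\psi(\partial_v)-\nabla_{\partial_v}\psi(\partial_u)$ directly from \eqref{eq:d-nu} together with the connection equations \eqref{eq:n1} and their duals for $\vect{e}_2$. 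Collecting the coefficients of $\vect{e}_1$ and $\vect{e}_2$ and setting them to zero produces exactly the pair of equations \eqref{eq:Codazzi_1}. For the compatibility, I would expand $\inner{\phi(\partial_u)}{\psi(\partial_v)}=\inner{\phi(\partial_v)}{\psi(\partial_u)}$ using $\phi(\partial_u)=\rho\,\vect{e}_1$ and $\phi(\partial_v)=(\lambda/\rho)\vect{e}_2$ from \eqref{eq:e1}--\eqref{eq:e2}; orthonormality of $(\vect{e}_1,\vect{e}_2)$ then reduces this to $\rho B=(\lambda/\rho)C$, which, since $\rho^2=E$, is precisely the symmetry relation \eqref{eq:symm}.

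For (2), I would use the connection $1$-form $\theta=-\alpha\,du-\beta\,dv$ attached to $(\vect{e}_1,\vect{e}_2)$, whose exterior derivative was already computed (in the display preceding \eqref{eq:chK}) as $d\theta=(\alpha_v-\beta_u)\,du\wedge dv$. Comparing this with the prescribed right-hand side of \eqref{eq:Gauss}, namely $\det\pmt{A & B \\ C & D}du\wedge dv=(AD-BC)\,du\wedge dv$, shows that the integrability condition \eqref{eq:Gauss} holds if and only if $AD-BC=\alpha_v-\beta_u=\check{K}$, which is \eqref{eq:egregium}.

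Finally, for (3), given $C^r$-functions $A,B,C,D$ satisfying \eqref{eq:Codazzi_1}, \eqref{eq:symm}, \eqref{eq:egregium} and \eqref{eq:ABCD*}, I would define $\psi$ by \eqref{eq:d-nu}. By part (1) the quintuple $(\mathcal{E},\inner{\,}{\,},\nabla,\phi,\psi)$ is then a frontal bundle, and by part (2) the integrability condition \eqref{eq:Gauss} is met. Hence Theorem \ref{fact:realization} applies on the simply-connected neighborhood $(U;u,v)$ and yields a $C^r$-frontal $f:U\to\R^3$ with unit normal $\nu$; by construction its first fundamental form is $\phi^*\inner{\,}{\,}=ds^2$. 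I expect the only delicate point to be bookkeeping: matching the signs in \eqref{eq:n1} and the roles of $A,B,C,D$ in \eqref{eq:d-nu} and \eqref{eq:ABCD*} so that the three algebraic conditions line up exactly with the hypotheses of Theorem \ref{fact:realization}. The genuine analytic content, namely the integration of the system \eqref{pde:1}--\eqref{pde:3}, is entirely carried by that theorem.
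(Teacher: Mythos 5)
Your proposal is correct and follows essentially the same route as the paper's own proof: reduce the Codazzi identity \eqref{eq:Codazzi} for $\psi$ to $\nabla_{\partial_u}\psi(\partial_v)=\nabla_{\partial_v}\psi(\partial_u)$ on the coordinate frame (yielding \eqref{eq:Codazzi_1}), identify \eqref{eq:symmetric} with \eqref{eq:symm} and \eqref{eq:Gauss} with \eqref{eq:egregium} via the frame $(\vect{e}_1,\vect{e}_2)$ and the form $\theta=-\alpha\,du-\beta\,dv$, and then invoke Theorem \ref{fact:realization}. The only difference is that you spell out the tensoriality argument and the frame computations that the paper leaves implicit, which is a faithful filling-in rather than a new approach.
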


\begin{proof}
 The map $\psi$ as in
 \eqref{eq:d-nu} satisfies
 \eqref{eq:Codazzi_1}
 if and only if
 $\nabla_{\partial_u}\psi(\partial_v)
 =\nabla_{\partial_v}\psi(\partial_u)$.
 On the other hand,
 \eqref{eq:symmetric} (resp. \eqref{eq:Gauss}) is
 equivalent to
 the condition \eqref{eq:symm}
 (resp.\ \eqref{eq:egregium}).
 So we can apply
 Theorem \ref{fact:realization}, and obtain the assertion.
\end{proof}
 
\begin{remark}
 The system of partial differential equations $($PDE$)$
 given by
 \eqref{eq:Codazzi_1},
 \eqref{eq:symm} and
 \eqref{eq:egregium}
 is the same as in \cite[(5) in Page 108]{Kossowski}.
 However, there is a sign typographical error
 in \cite[(5)]{Kossowski}, and the above
 PDE corrects it.
\end{remark}

Since $\lambda$ vanishes along the semi-definite set,
\eqref{eq:symm} yields the following:

\begin{corollary}\label{cor:B}
 The function $B$ as in
 Proposition \ref{lem:compatibility}
 vanishes identically along
 the semi-definite set.
\end{corollary}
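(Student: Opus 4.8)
The plan is to read the conclusion off directly from the symmetry relation \eqref{eq:symm}, since no genuine computation is required. First I would recall that, by Definition \ref{def:K2} together with the discussion following Proposition \ref{prop:lambda}, the signed area density function $\lambda$ serves as an identifier of the semi-definite set: a point of $U$ belongs to the semi-definite set precisely when $\lambda$ vanishes there. Hence, along the semi-definite set we have $\lambda \equiv 0$.

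Next I would invoke the relation $EB = \lambda C$ from \eqref{eq:symm}, which holds as an identity on all of $U$ because the quadruple $A,B,C,D$ is assumed to make $(\E,\inner{\,}{\,},\nabla,\phi,\psi)$ a frontal bundle (cf. Proposition \ref{lem:compatibility}). Restricting this identity to the semi-definite set and using $\lambda \equiv 0$ there, I obtain $EB = 0$ along that set. Since the coordinate system is K-orthogonal with the expression \eqref{cor:normal}, we have $E = \rho^2$ with $\rho > 0$ on $U$; in particular $E > 0$. Dividing the relation $EB = 0$ by $E$ then gives $B = 0$ along the semi-definite set, which is exactly the claimed assertion.

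There is essentially no obstacle here, so the only points I would take care to state explicitly are that \eqref{eq:symm} is available as an identity on $U$ (and therefore valid on the semi-definite set after restriction), and that the positivity $E > 0$ needed to cancel $E$ is guaranteed by the K-orthogonal normalization of \eqref{cor:normal} rather than requiring any separate verification.
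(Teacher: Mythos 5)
Your proof is correct and is essentially the paper's own argument: the paper derives the corollary in one line from \eqref{eq:symm}, namely $EB=\lambda C$ with $\lambda=0$ on the semi-definite set and $E=\rho^2>0$ from the {\rm K}-orthogonal expression \eqref{cor:normal}. Your write-up simply makes these same steps explicit.
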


\subsection{Second fundamental data of frontal maps}

We now fix a $C^\omega$-frontal 
$f:U\to \R^3$
defined on a simply-connected domain $U$
such that its first fundamental form $ds^2$ is a
$C^\omega$-differentiable Kossowski metric.
We let $\nu:U\to \R^3$ be a unit normal vector field
along $f$ and fix a point $p\in U$. Then 
there exists a $C^\omega$-differentiable
K-orthogonal coordinate system
$(u,v)$ centered at the origin $p$ by 
Proposition \ref{prop:K-coord}.
Without loss of generality, we may assume that
$(u,v)$ is defined on $U$. 
Then we have the expression \eqref{cor:normal}.
So we set
\begin{equation}\label{eq:h-e1}
 \hat{\vect{e}}_1:=\frac{1}{\rho}f_u,
\end{equation}
which is a unit vector field defined on $U$.
We then define a unit vector field
\[
   \hat{\vect{e}}_2:=\nu \times \hat{\vect{e}}_1.
\]
By definition, $f_v$ is a scalar multiplication of $\hat{\vect{e}}_2$.
Since
\[
  f_v \cdot \hat{\vect{e}}_2=\det(f_v,\nu \times \hat{\vect{e}}_1)=
  \frac{1}{\rho}\det(f_v,\nu,f_u)=
  \frac{1}{\rho}\det(f_u,f_v,\nu)=\frac{\lambda}{\rho},
\]
we have
\begin{equation}\label{eq:h-e2}
    f_v=\frac{\lambda}{\rho}\hat{\vect{e}}_2.
\end{equation}
We set
\begin{equation}\label{eq:abcd}
  A:=\nu_u \cdot \hat{\vect{e}}_1,\quad
  B:=\nu_v \cdot \hat{\vect{e}}_1,\quad 
  C:=\nu_u \cdot \hat{\vect{e}}_2,\quad
  D:=\nu_v \cdot \hat{\vect{e}}_2. 
\end{equation}
We call $(A,B,C,D)$ the {\it second fundamental data} of $f$.

As discussed in the previous subsection,
the first fundamental form $ds^2$
of $f$ induces a bundle homomorphism
(cf.\ \eqref{eq:TU})
$\phi:TU \rightarrow \E$
such that $(\E, \inner{\,}{\,}, \nabla,\phi)$
is a coherent tangent bundle on $U$ satisfying
$ds^2=\phi^*\inner{\,}{\,}$.
Then we obtain an orthogonal trivialization
$(U;u,v,\vect{e}_1,\vect{e}_2)$ 
of the coherent tangent bundle $(\E, \inner{\,}{\,}, \nabla,\phi)$
by \eqref{eq:e1} and \eqref{eq:e2}.
We then define $\psi:TU\to \E$ by
\[
  \psi(\partial_u)=A \vect{e}_1+C \vect{e}_2,\quad
  \psi(\partial_v)=B \vect{e}_1+D \vect{e}_2.
\]

\begin{lemma}\label{lem:000}
 The second fundamental data of $f$ 
 satisfies \eqref{eq:Codazzi_1}, \eqref{eq:symm} and \eqref{eq:egregium}.
 In particular, $\psi$ is a bundle homomorphism
 such that $(\E,\inner{\,}{\,}, \nabla,\phi,\psi)$
 is a frontal bundle satisfying \eqref{eq:Gauss}.
\end{lemma}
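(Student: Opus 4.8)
The plan is to regard $\mathcal{F}:=(\hat{\vect{e}}_1,\hat{\vect{e}}_2,\nu)$ as an orthonormal moving frame along $f$ in $\R^3$ (it lies in $\SO(3)$ since $\hat{\vect{e}}_1\times\hat{\vect{e}}_2=\nu$) and to read the three equations off its Frenet-type structure equations. Recall $\hat{\vect{e}}_1=f_u/\rho$ and $f_v=(\lambda/\rho)\hat{\vect{e}}_2$ (cf.~\eqref{eq:h-e1}, \eqref{eq:h-e2}). The tangential connection along $f$ (the flat connection of $\R^3$ projected onto the plane orthogonal to $\nu$) coincides on the regular set with the Levi-Civita connection of $ds^2$, hence with the abstract connection $\nabla$ of Fact~\ref{fact:jitugen}; by continuity across the semi-definite set the coefficients $(\hat{\vect{e}}_1)_u\cdot\hat{\vect{e}}_2$ and $(\hat{\vect{e}}_1)_v\cdot\hat{\vect{e}}_2$ agree with the $\alpha,\beta$ of \eqref{eq:n1}--\eqref{eq:Christoffel}. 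Writing $\mathcal{F}_u=\mathcal{F}\,\Omega_u$ and $\mathcal{F}_v=\mathcal{F}\,\Omega_v$ with skew-symmetric $3\times3$ matrices $\Omega_u,\Omega_v$, the definition \eqref{eq:abcd} together with $\hat{\vect{e}}_i\cdot\nu=0$ places $\alpha$ in the $\hat{\vect{e}}_1\hat{\vect{e}}_2$-slot and $A,C$ in the slots pairing $\hat{\vect{e}}_1,\hat{\vect{e}}_2$ with $\nu$ inside $\Omega_u$, and $\beta,B,D$ correspondingly inside $\Omega_v$.

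First I would establish \eqref{eq:symm}. Differentiating $f_u\cdot\nu=0$ in $v$ and $f_v\cdot\nu=0$ in $u$, and using $f_{uv}=f_{vu}$, gives $f_u\cdot\nu_v=f_v\cdot\nu_u$; substituting $f_u=\rho\,\hat{\vect{e}}_1$ and $f_v=(\lambda/\rho)\hat{\vect{e}}_2$ turns this into $\rho B=(\lambda/\rho)C$, that is $EB=\lambda C$, which is exactly \eqref{eq:symm}. This step requires no integrability input and is valid at every point, including the semi-definite set.

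Next, \eqref{eq:Codazzi_1} and \eqref{eq:egregium} follow from the identity $\mathcal{F}_{uv}=\mathcal{F}_{vu}$, which is automatic because $\mathcal{F}$ is a genuine $\SO(3)$-valued frame. This identity is equivalent to $(\Omega_u)_v-(\Omega_v)_u=\Omega_u\Omega_v-\Omega_v\Omega_u$; identifying skew-symmetric matrices with vectors in $\R^3$ so that the commutator becomes a cross product, the three scalar components read $D_u-C_v=\beta A-\alpha B$, $B_u-A_v=\alpha D-\beta C$, and $\alpha_v-\beta_u=AD-BC$. The first two are precisely \eqref{eq:Codazzi_1}, and since $d\theta=\Omega=\check{K}\,du\wedge dv$ yields $\alpha_v-\beta_u=\check{K}$, the third is exactly \eqref{eq:egregium}.

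I expect the only genuine subtlety to be the identification of $\alpha,\beta$ with the metric quantities of \eqref{eq:Christoffel}: everything else is the classical fact that a moving frame in $\R^3$ automatically satisfies its integrability (Gauss--Codazzi) conditions. This is handled by noting that the tangential connection restricts to the Levi-Civita connection on the regular set, so $\alpha,\beta$ match there and hence, by continuity, on all of $U$ (the quantity $E_v/\lambda$ being $C^r$ by the remark following \eqref{eq:chK}). Once \eqref{eq:Codazzi_1}, \eqref{eq:symm} and \eqref{eq:egregium} are in hand, the ``in particular'' clause is immediate from Proposition~\ref{lem:compatibility}: its parts (1) and (2) say exactly that these three equations make $(\mathcal{E},\inner{\,}{\,},\nabla,\phi,\psi)$ a frontal bundle satisfying \eqref{eq:Gauss}.
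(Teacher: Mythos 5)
Your proposal is correct, and it takes a route that differs in a meaningful way from the paper's own proof. The paper argues in four lines: since $ds^2$ is a Kossowski metric, the regular set $R_f$ is open and dense in $U$; on $R_f$ the map $f$ is an immersion, so \eqref{eq:Codazzi_1}, \eqref{eq:symm} and \eqref{eq:egregium} hold there by classical surface theory (the Gauss--Codazzi equations for immersions); and since all quantities involved are continuous on $U$, the three equations extend to $U$ by density. You instead unpack that black box: you derive \eqref{eq:symm} pointwise everywhere on $U$ from $f_u\cdot\nu=f_v\cdot\nu=0$ and $f_{uv}=f_{vu}$ (using only that $f$ is a frontal, with no regularity needed), and you obtain \eqref{eq:Codazzi_1} and \eqref{eq:egregium} from the integrability $\mathcal{F}_{uv}=\mathcal{F}_{vu}$ of the globally defined $\SO(3)$-frame $\mathcal{F}=(\hat{\vect{e}}_1,\hat{\vect{e}}_2,\nu)$, which is again an identity valid at semi-definite points, not merely a limit from the regular set. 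The only place where you invoke density and continuity is the identification of the frame coefficients $(\hat{\vect{e}}_1)_u\cdot\hat{\vect{e}}_2$, $(\hat{\vect{e}}_1)_v\cdot\hat{\vect{e}}_2$ with the metric quantities $\alpha,\beta$ of \eqref{eq:Christoffel}, and your justification there (Levi-Civita on the regular set, $C^r$-ness of $E_v/\lambda$) is sound; in fact this step could even be done without density, by comparing tangential and normal components of $f_{uv}=f_{vu}$ exactly as in the paper's derivation of \eqref{eq:Christoffel}, which would simultaneously re-prove \eqref{eq:symm}. What the paper's argument buys is brevity; what yours buys is transparency about \emph{why} the equations persist across the singular set --- the frame and the data $(A,B,C,D)$, $\alpha$, $\beta$ are globally defined $C^r$ objects satisfying structure equations everywhere, so no degeneration actually occurs --- and it confines the density argument to a single scalar identification rather than to the three equations themselves.
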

\begin{proof}
 Since $ds^2$ is a Kossowski metric, the regular set 
 $R_f$ of $f$ is open dense in $U$.
 By \eqref{eq:abcd}, $(A,B,C,D)$ satisfies
 \[
    \nu_u=A \hat{\vect{e}}_1+C \hat{\vect{e}}_2,\quad
    \nu_v=B \hat{\vect{e}}_1+D \hat{\vect{e}}_2
 \]
 on $U$. Since $f$ is an immersion on $R_f$, 
 \eqref{eq:Codazzi_1}, \eqref{eq:symm} and \eqref{eq:egregium}
 hold on $R_f$.
 By the continuity, these formulas also hold on $U$.
\end{proof}

By Proposition \ref{lem:compatibility}
and Lemma \ref{lem:000},
there exists a $C^\omega$-frontal 
$f_0:U\to \R^3$
whose first fundamental form
is $ds^2$ such that $(A,B,C,D)$ is the second fundamental data
of $f_0$ with respect to a unit normal vector 
field $\nu_0$.
Then we can prove the following:
\begin{proposition}\label{prop:000}
 $f$ is strongly congruent to $f_0$.
\end{proposition}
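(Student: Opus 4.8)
The plan is to deduce the strong congruence from the uniqueness part of the fundamental theorem for frontals, Theorem \ref{fact:realization}, after moving $f$ into the normalized position prescribed there by a single rigid motion. First I would build an orientation-preserving isometry $\Phi$ of $\R^3$ normalizing $f$ at $p$. Since $\hat{\vect e}_2=\nu\times\hat{\vect e}_1$, the triple $(\hat{\vect e}_1(p),\hat{\vect e}_2(p),\nu(p))$ is a positively oriented orthonormal basis of $\R^3$, so the matrix $R:=(\hat{\vect e}_1(p),\hat{\vect e}_2(p),\nu(p))$ (with these vectors as columns) lies in $\SO(3)$. Setting $R_0:=\trans R\in\SO(3)$ and $\Phi(\vect x):=R_0(\vect x-f(p))$, the frontal $\tilde f:=\Phi\circ f$ satisfies $\tilde f(p)=\vect 0$, and its frame at $p$ equals $R_0R=\id$; thus $\tilde f$ obeys condition (4) of Theorem \ref{fact:realization}.

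Next I would check that passing from $f$ to $\tilde f$ alters neither the first fundamental form nor the second fundamental data. The metric is preserved because $\Phi$ is an isometry; in particular $|\tilde f_u|=|f_u|=\rho$. For the second fundamental data, the unit normal transforms as $\tilde\nu=R_0\nu$ and $\tilde f_u=R_0f_u$, so the adapted frame of $\tilde f$ is $\tilde f_u/\rho=R_0\hat{\vect e}_1$ and $\tilde\nu\times(\tilde f_u/\rho)=R_0(\nu\times\hat{\vect e}_1)=R_0\hat{\vect e}_2$, using that $R_0\in\SO(3)$ preserves the cross product. Consequently $\tilde\nu_u\cdot R_0\hat{\vect e}_1=\nu_u\cdot\hat{\vect e}_1=A$, and the same computation for the remaining derivatives entering \eqref{eq:abcd} shows that $\tilde f$ has second fundamental data $(A,B,C,D)$ as well.

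Finally I would invoke uniqueness. The frontal bundle $(\mathcal E,\inner{\,}{\,},\nabla,\phi,\psi)$ is completely determined by $ds^2$ and $(A,B,C,D)$: the coherent tangent bundle $(\mathcal E,\inner{\,}{\,},\nabla,\phi)$ is fixed by $ds^2$ through Fact \ref{fact:jitugen}, and $\psi$ is fixed by the second fundamental data through \eqref{eq:d-nu}. Both $\tilde f$, with frame $(R_0\hat{\vect e}_1,R_0\hat{\vect e}_2,R_0\nu)$, and the frontal $f_0$ produced just above, with its frame and normal $\nu_0$, are then quadruples satisfying conditions (1)--(4) of Theorem \ref{fact:realization} for this same frontal bundle. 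By the uniqueness asserted there, the two quadruples coincide; in particular $\tilde f=f_0$, i.e.\ $\Phi\circ f=f_0$, which is exactly the strong congruence of $f$ and $f_0$.

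The step demanding the most care is arranging the normalizing motion to be orientation-preserving, for this is precisely what keeps the second fundamental data invariant: an orientation-reversing isometry would replace $\nu$ by $-R_0\nu$ and reverse the cross product, flipping the signs in \eqref{eq:abcd} and producing a different $\psi$. Here it is automatic, since $(\hat{\vect e}_1,\hat{\vect e}_2,\nu)$ is positively oriented by the very construction $\hat{\vect e}_2=\nu\times\hat{\vect e}_1$, so that $R\in\SO(3)$. Once this orientation bookkeeping is settled, the conclusion follows directly from the uniqueness in Theorem \ref{fact:realization}.
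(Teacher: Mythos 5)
Your proof is correct, and it rests on the same pillar as the paper's: the uniqueness part of the fundamental theorem for frontals (Theorem \ref{fact:realization}). The execution, however, is organized differently, and the difference is worth noting. The paper's proof does not normalize $f$ by a rigid motion at all; instead it verifies directly that the pair $(f,\nu)$, with the adapted frame \eqref{eq:h-e1}, \eqref{eq:h-e2}, satisfies the structure equations \eqref{pde:1}, \eqref{pde:2}, \eqref{pde:3}. This verification is a priori available only on the regular set $R_f$ (where the abstract coherent tangent bundle is identified with the tangent bundle and $\nabla$ with the tangential connection), and it is extended to all of $U$ by continuity, using that $R_f$ is dense because $ds^2$ is a Kossowski metric; strong congruence then follows from uniqueness of solutions of this system, with the matching of initial data by a rigid motion left implicit. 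You invert this division of labor: you make the rigid motion completely explicit \(---\) the orientation-preserving normalization $\Phi(\vect{x})=R_0(\vect{x}-f(p))$, together with the correct observation that orientation preservation is exactly what keeps the cross-product frame $\hat{\vect{e}}_2=\nu\times\hat{\vect{e}}_1$ and hence the data \eqref{eq:abcd} intact \(---\) and you defer the differential-equations content entirely to the uniqueness clause of Theorem \ref{fact:realization}, read as: any two quadruples satisfying (1)--(4) for the same frontal bundle coincide. That is a legitimate reading of the theorem as stated, and your pointwise verification of (1)--(4) for $\tilde f=\Phi\circ f$ needs no density argument, so your proof is complete modulo the paper's own results. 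The only caveat is that the justification of that uniqueness clause (namely, that conditions (1)--(4) force the frame equations \eqref{pde:2} to hold on all of $U$) itself requires precisely the regular-set-plus-continuity argument that the paper's proof of this proposition spells out; so the density step is not eliminated in your approach, only absorbed into the cited theorem. What your version buys is transparency about the rigid motion, which the paper compresses into the final sentence of its proof.
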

\begin{proof}
 The pair $(f_0,\nu_0)$ satisfies
 \eqref{pde:1}, \eqref{pde:2} and \eqref{pde:3} on $U$.
 On the other hand,
 the pair $(f,\nu)$ satisfies
 \eqref{pde:1}, \eqref{pde:2} and \eqref{pde:3}
 on $R_f$. By the  continuity, 
 they also hold  on $U$.
 Hence the two pairs $(f_0,\nu_0)$ and $(f,\nu)$
 satisfy the same system 
 of partial differential equations
 \eqref{pde:1}, \eqref{pde:2} and
 \eqref{pde:3}.
 So the uniqueness of the solution, 
 $f$ is strongly congruent to $f_0$.
\end{proof}

\begin{corollary}\label{cor:Need2}
 Let $f,f':U\to \R^3$ be two frontal maps
 with the same first fundamental form $ds^2$. 
 Then $f,g$ are strongly congruent if
 and only if they have the same second fundamental
 data up to $\pm$-multiplication.
\end{corollary}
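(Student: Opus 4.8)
The plan is to derive both implications from the rigidity already packaged in Theorem~\ref{fact:realization} and Proposition~\ref{prop:000}. Because $f$ and $f'$ share the same first fundamental form $ds^2$, the K-orthogonal coordinates produced in Section~\ref{sec2} (Proposition~\ref{prop:K-coord}) depend only on $ds^2$; hence I fix one such coordinate system $(U;u,v)$ centered at $p$ and use it for both maps at once. This furnishes the common functions $\rho=\sqrt{E}$ and $\lambda$, the adapted frames $\hat{\vect{e}}_1=\rho^{-1}f_u$ and $\hat{\vect{e}}_2=\nu\times\hat{\vect{e}}_1$ (and likewise for $f'$) through \eqref{eq:h-e1}--\eqref{eq:h-e2}, together with the coherent tangent bundle of Fact~\ref{fact:jitugen}.

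For the sufficiency direction, suppose that for suitable choices of unit normals the second fundamental data of $f$ and of $f'$ coincide, equal to some $(A,B,C,D)$. Let $f_0$ be the frontal realizing $ds^2$ with this data and the normalization in condition~(4) of Theorem~\ref{fact:realization}. By Proposition~\ref{prop:000}, both $f$ and $f'$ are strongly congruent to $f_0$, hence to each other. Concretely, $(f,\nu)$ and $(f',\nu')$ then satisfy the frame system \eqref{pde:1}--\eqref{pde:3} with identical coefficients on the dense regular set, hence on all of $U$ by continuity, and uniqueness of solutions for prescribed initial values at $p$ does the rest. If the data agree only after multiplication by $-1$, I first replace $f'$ by $T\circ f'$ for an orientation-reversing isometry $T$; the computation below shows this negates the data, reducing matters to the equal case.

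For the necessity direction, write $f'=\Phi\circ f$ with $\Phi(x)=Rx+c$, $R\in O(3)$. Isometries preserve $ds^2$, so the same coordinates apply and $\hat{\vect{e}}_1'=R\hat{\vect{e}}_1$. Taking the unit normal of $f'$ to be $\nu'=(\det R)\,R\nu$ and using the identity $(Ra)\times(Rb)=(\det R)\,R(a\times b)$, one gets $\hat{\vect{e}}_2'=(\det R)\,R\hat{\vect{e}}_2$; substituting into \eqref{eq:abcd} and cancelling $R$ by orthogonality yields $(A',B',C',D')=(\det R)\,(A,B,C,D)=\pm(A,B,C,D)$. Thus the two data sets agree up to $\pm$-multiplication, the sign recording the orientation of the linear part $R$.

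The analytic content—existence and uniqueness of the realizing frontal—is already supplied by Theorem~\ref{fact:realization} and Proposition~\ref{prop:000}, so the only genuinely delicate point I expect is the sign bookkeeping in the last step: one must track simultaneously the $\pm$-ambiguity of the unit normal, the sign of $\lambda=\det(f_u,f_v,\nu)$, and the factor $\det R$ entering through the cross product defining $\hat{\vect{e}}_2$, and verify that these discrete choices assemble into a single overall sign rather than the partial change $(A,B,C,D)\mapsto(-A,-B,C,D)$ produced by flipping $\nu$ alone.
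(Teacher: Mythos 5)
Your overall strategy coincides with the paper's: for sufficiency you feed the common data into Theorem~\ref{fact:realization} via Proposition~\ref{lem:compatibility} and use Proposition~\ref{prop:000} to make both maps strongly congruent to the same normalized realization $f_0$; for necessity you compute how an ambient isometry acts on the data \eqref{eq:abcd}. The paper handles necessity more tersely (it simply replaces $f'$ by $-f'$, i.e.\ composes with the antipodal isometry while keeping the same normal, which realizes the overall sign change), but your explicit computation with $\Phi(x)=Rx+c$, $R\in O(3)$, is the same idea carried out in general, and your reduction of the ``data agree up to $-1$'' case to the ``data agree'' case by composing with an orientation-reversing isometry is exactly the paper's move.

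However, the sign bookkeeping---which you yourself single out as the delicate point---contains an inconsistency. With your choice $\nu'=(\det R)\,R\nu$, the identity $(Ra)\times(Rb)=(\det R)\,R(a\times b)$ gives
\[
\hat{\vect{e}}_2'=\nu'\times\hat{\vect{e}}_1'
=(\det R)\bigl((R\nu)\times(R\hat{\vect{e}}_1)\bigr)
=(\det R)^2\,R\hat{\vect{e}}_2=R\hat{\vect{e}}_2,
\]
\emph{not} $(\det R)\,R\hat{\vect{e}}_2$: the two determinant factors cancel. It is precisely this cancellation that produces the overall factor in $(A',B',C',D')=(\det R)(A,B,C,D)$, because then each of the four entries picks up the single $\det R$ carried by $\nu'$. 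The formula $\hat{\vect{e}}_2'=(\det R)\,R\hat{\vect{e}}_2$ that you wrote is what one gets for the \emph{other} normalization $\nu'=R\nu$; and if one carries your written intermediates through \eqref{eq:abcd} literally, the result is $(A',B',C',D')=\bigl((\det R)A,(\det R)B,C,D\bigr)$---exactly the ``partial change'' you warn about in your final paragraph, which for $\det R=-1$ is not a $\pm$-multiple of $(A,B,C,D)$ and would make the necessity direction fail. So your stated conclusion is correct for your stated choice of $\nu'$, but the intermediate step contradicts it; correcting that one line repairs the argument completely.
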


\begin{proof}
 Let $\nu$ (resp.\ $\nu'$) be the unit normal vector
 of $f$ (resp.\ $f'$).
 Suppose that $f'$ is strongly congruent to $f$,
 then $f'$ has the same second fundamental
 data as $f$ by replacing $f'$ by $-f'$ if necessary.

 On the other hand, suppose that 
 $f$ and $f'$ have the same second fundamental
 data up to a $\pm$-multiplication.
 By replacing $f$ by $-f$, we may assume that
 $f$ and $f'$ have the same second fundamental data.
 Then by definition,
 $f_0$ and $f'_0$ have the same second fundamental data.
 By Proposition \ref{lem:compatibility},
 $f_0$ is strongly congruent to $f'_0$.  
 By Proposition \ref{prop:000},
 $f$ is strongly congruent to $f'$.  
 So we obtain the conclusion.
\end{proof}

\section{Isometric realizations of Kossowski metrics}
\label{sec4}

\subsection{Proof of Theorem \ref{thm:main0}}
To prove our main results,
we need to apply the following
Cauchy-Kowalevski
theorem:
\begin{fact}[cf.\ \cite{KP}]\label{fact:CK}
 Let $F^i(u,v,z^1,z^2,w^1,w^2)$ $(i=1,2)$ 
 be two real analytic functions defined 
 on a domain $\D$ of $\R^6$, and let
 $\omega^i:(-\epsilon,\epsilon)\to \R$ $(i=1,2)$
 be  real analytic functions so that
 \[
    (u,0,\omega^1(u),\omega^2(u),(\omega^1)'(u),(\omega^2)'(u))\in \D
       \qquad (|u|<\epsilon),
 \]
 where $\epsilon>0$ is a sufficiently small number.
 Then there exists a unique real analytic map 
 $\phi=(\phi^1,\phi^2):U\to \R^2$ defined on a 
 neighborhood $U$ of the origin of the $uv$-plane
 such that
 \[
   \phi^i_v(u,v)
      =F^i(u,v,\phi^1(u,v),\phi^2(u,v),\phi^1_u(u,v),\phi^2_u(u,v))
        \qquad (i=1,2),
 \]
 and $\phi^i(u,0)=\omega^i(u)$ for $i=1,2$.
\end{fact}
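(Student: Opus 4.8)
The plan is to establish this by the classical \emph{method of majorants}, organized in three stages: a reduction to vanishing Cauchy data, the formal construction of a unique power-series candidate, and a convergence argument. First I would reduce to the case $\omega^1=\omega^2=0$. Setting $\tilde\phi^i:=\phi^i-\omega^i(u)$ gives $\tilde\phi^i_v=\phi^i_v$, so the system is transformed into one of exactly the same shape, $\tilde\phi^i_v=\tilde F^i(u,v,\tilde\phi^1,\tilde\phi^2,\tilde\phi^1_u,\tilde\phi^2_u)$, with $\tilde F^i$ again real analytic and with the Cauchy data now lying on the $u$-axis $\{v=z^1=z^2=w^1=w^2=0\}$. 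It then suffices to produce an analytic germ vanishing on $\{v=0\}$, and after a translation I may center everything at the origin of $\R^6$.

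Next I would obtain uniqueness together with a formal solution. Writing $\phi^i=\sum_{a,b}c^i_{a,b}\,u^av^b/(a!\,b!)$, the initial condition fixes every coefficient with $b=0$. The equation expresses $\partial_v\phi^i$ on $\{v=0\}$ in terms of quantities already known, and differentiating it repeatedly in $v$ exhibits each higher coefficient $c^i_{a,b}$ ($b\ge1$) as a universal polynomial, \emph{with non-negative integer coefficients}, in the Taylor coefficients of $F^i$ and in lower-order coefficients $c^j_{a',b'}$. This recursion determines all $c^i_{a,b}$ uniquely, which already yields the uniqueness assertion; what remains is to prove that the resulting series converge near the origin.

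For convergence I would invoke majorants. Since each $F^i$ is analytic near the origin, both are dominated by a single elementary function
\begin{equation*}
\Phi(u,v,z^1,z^2,w^1,w^2)
=\frac{CM}{1-\bigl(u+v+z^1+z^2+w^1+w^2\bigr)/r}
\end{equation*}
for suitable constants $C,M,r>0$. Because the recursion above has non-negative coefficients, a solution of the majorant problem $\Psi_v=\Phi(u,v,\Psi,\Psi,\Psi_u,\Psi_u)$ with $\Psi|_{v=0}=0$ dominates each formal series $\phi^i$ coefficient-by-coefficient. I would then seek $\Psi$ depending only on $s:=u+v$; the majorant problem collapses to a single (implicit) ordinary differential equation in $s$ with analytic right-hand side, whose solution --- obtainable from the classical existence theorem for analytic ODEs, or from an explicit quadratic --- is analytic with non-negative Taylor coefficients. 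Term-wise domination by $\Psi$ then forces each $\phi^i$ to converge on a neighborhood of the origin, and differentiation under the sum confirms that the limit solves the original system.

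The main obstacle is the convergence step. The formal recursion and the uniqueness statement are essentially bookkeeping, but the delicate point is verifying that the combinatorial factors produced by repeated $v$-differentiation are genuinely non-negative, so that $\Psi$ really does majorize the formal solution, and then carrying out the reduction of the majorant system to a solvable ODE. The coupling introduced by the arguments $w^j=\phi^j_u$, which feed $u$-derivatives back into the evolution in $v$, is what makes both the recursion and the single-variable ansatz $\Psi=\Psi(u+v)$ technically subtle.
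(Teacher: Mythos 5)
You should first be aware that the paper offers no proof of this Fact at all: it is quoted as a classical result with a citation to Krantz--Parks, so your attempt can only be measured against the standard textbook argument, not against anything in the paper. Within your plan, the reduction to vanishing Cauchy data and the formal step are correct and standard: after the substitution $\tilde\phi^i=\phi^i-\omega^i$, the coefficients $c^i_{a,b}$ with $b\ge1$ are determined recursively by universal polynomials with non-negative integer coefficients in the Taylor coefficients of the reduced $F^i$ and in the lower-order $c^j_{a',b'}$, and since any analytic solution must satisfy this recursion, germ uniqueness follows.

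The genuine gap is in the convergence step, exactly at the point you flag as subtle but leave unresolved: the ansatz $\Psi=\Psi(u+v)$ fails in general. Setting $s=u+v$, so that $\Psi_u=\Psi_v=\Psi'$, and clearing the denominator in $\Psi'=CM\bigl(1-(s+2\Psi+2\Psi')/r\bigr)^{-1}$ gives the quadratic
\[
2(\Psi')^2-(r-s-2\Psi)\,\Psi'+CMr=0,
\]
whose discriminant at $s=0$, $\Psi=0$ is $r(r-8CM)$. The constants are not at your disposal; they must satisfy the Cauchy estimates for $F^i$. For example, for $F^i=\bigl(1-(u+v+z^1+z^2+w^1+w^2)\bigr)^{-1}$ every admissible choice forces $r\le 1\le CM$, so the discriminant is negative, the quadratic has no real roots, and no majorant of the form $\Psi(u+v)$ exists: the argument stops. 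The missing idea is an anisotropic rescaling: seek $\Psi=\Psi(s)$ with $s=u+\beta v$ (equivalently, rescale $v\mapsto v/\beta$ before majorizing), using $u+v\ll u+\beta v$ for $\beta\ge1$; the discriminant becomes $\beta r(\beta r-8CM)$, positive once $\beta>8CM/r$, and the implicit equation resolves into an honest analytic ODE $\Psi'=G(s,\Psi)$. Even then two obligations remain: since $\Psi(u+\beta v)$ no longer vanishes on $\{v=0\}$, the term-by-term domination must be re-derived from the recursion with merely dominating (rather than equal) initial data, and you must verify that the chosen root $G$ has non-negative Taylor coefficients so that $\Psi$'s are non-negative. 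Both can be done, but the cleaner classical route --- essentially the one in the cited reference --- is to first convert the fully nonlinear system into a quasilinear one by adjoining $\phi^i_u$ and $\phi^i$ as extra unknowns; for a quasilinear system the majorant ODE is linear in $\Psi'$, its solution is manifestly a power series with non-negative coefficients, and the whole difficulty caused by the $w^j$-dependence, which you correctly identified as the crux, disappears.
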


We fix a Kossowski metric $ds^2$ defined on 
a real analytic $M^2$,
and take a point $p\in M^2$.
Let $\gamma$ be a $C^\omega$-regular curve in $M^2$
such that $\gamma(0)=p$
and $\gamma'(0)$ is not a null direction.
By Proposition \ref{prop:K-coord},
we can take 
real analytic K-orthogonal coordinates  $(u,v)$ centered at $p$.
Then $\gamma(u)=(u,0)$ holds.
Let $\rho, \lambda$ be given as in
\eqref{cor:normal} and
$\alpha,\beta$ defined by
\eqref{eq:Christoffel}.
Since $ds^2$ is real analytic,
the four functions $\rho, \lambda,\alpha,\beta$ are
all real analytic on $U$.
Then we can consider the system of partial differential equations
\eqref{eq:Codazzi_1},
\eqref{eq:symm} and
\eqref{eq:egregium}
with unknown functions $A,B,C,D$.
We now assume 
$A(0,0)\ne 0$.
By \eqref{eq:symm}  and
\eqref{eq:egregium},
we can set
\begin{equation}\label{eq:BD}
  B:=\frac{\lambda C}{E},\qquad
  D:=\frac{\check{K}+BC}{A}=\frac{E\check{K}+\lambda C^2}{EA},
\end{equation}
and substituting them into
\eqref{eq:Codazzi_1},
we obtain the following normal form of a PDE
\begin{align}\label{eq:standard_form}
  A_v &= \left(\frac{\lambda C}{E}\right)_u 
            - \alpha \frac{E\check{K}+\lambda C^2}{EA} + \beta C,
 \nonumber \\
  C_v &= \left(\frac{E\check{K}+\lambda C^2}{EA}\right)_u 
            -  \beta A + \alpha \frac{\lambda C}{E} 
\end{align}
with unknown functions $A$ and $C$.

We fix two function germs $a(u)$ and $c(u)$
defined at $u=0$ so that $a(0)\ne 0$.
By applying Fact \ref{fact:CK}, 
there exist $A,C$ satisfying 
\eqref{eq:standard_form}
and
\begin{equation}\label{eq:AC-ini}
 A(u,0)=a(u),\qquad C(u,0)=c(u)
\end{equation}
defined on a
certain neighborhood $V(\subset U)$ of the origin.

Then, 
by Proposition \ref{lem:compatibility},
there exists a frontal
$f:=f_{a,c}:V\to \R^3$ with unit normal vector $\nu$
whose first fundamental form is $ds^2$
and the second fundamental data is $(A,B,C,D)$.
In particular, if we set
(cf.\ \eqref{eq:e1} and \eqref{eq:e2})
\begin{equation}\label{eq:e1e2}
  \hat{\vect{e}}_1:=\frac{f_u}{\rho}, \qquad
  \hat{\vect{e}}_2:=\frac{\rho f_v}{\lambda}
  \qquad
  (\rho:=\sqrt{E}),
\end{equation}
then
$d s^2$ has a local expression as in \eqref{cor:normal},
and
$(\hat{\vect{e}}_1,\hat{\vect{e}}_2,\nu)$
gives an orthonormal frame
along $f$ so that
\begin{equation}\label{eq:fufv0}
  f_u=\rho\,\hat{\vect{e}}_1,\quad
  f_v=\lambda \rho^{-1}\,\hat{\vect{e}}_2,
  \quad \nu=\hat{\vect{e}}_1\times \hat{\vect{e}}_2.
\end{equation}
Moreover, it holds that
\begin{equation}\label{eq:nunv0}
  \nu_u=A \hat{\vect{e}}_1+C \hat{\vect{e}}_2,\qquad
  \nu_v=B \hat{\vect{e}}_1+D \hat{\vect{e}}_2.
\end{equation}
By our choice of $(u,v)$, 
$\gamma(u)=(u,0)$ holds.
Since $\gamma'(0)$ is not a null vector,
$\hat \gamma(u):=f\circ \gamma(u)=f(u,0)$
gives a regular space curve, and 
so the normal curvature function $\kappa_n$
(cf.\ \eqref{eq:kn1}) of $f$ along the curve $\gamma(u)=(u,0)$
can be considered as follows.

\begin{proposition}\label{prop:43}
 Let $\kappa_n(u)$ be the normal curvature function 
 of $f$ along the curve $\gamma(u)=(u,0)$.
 Then 
 \begin{equation}\label{eq:a}
  a(u)=-\kappa_n(u)
 \end{equation}
 holds.
 Moreover, if $p$ is an $A_2$ semi-definite point and 
 the 
 curve $\gamma$ is
 the characteristic  curve, then $\kappa_n(u)$ coincides
 with the limiting normal curvature function along 
 $\gamma(u)$.
\end{proposition}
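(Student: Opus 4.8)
The plan is to unwind the definition \eqref{eq:kn1} of the normal curvature along $\hat\gamma(u)=f(u,0)$ directly in terms of the orthonormal frame $(\hat{\vect{e}}_1,\hat{\vect{e}}_2,\nu)$ supplied by \eqref{eq:fufv0}--\eqref{eq:nunv0}. Since $\gamma(u)=(u,0)$, we have $\hat\gamma'(u)=f_u(u,0)$ and $\hat\gamma''(u)=f_{uu}(u,0)$, so that $\kappa_n(u)=\bigl(f_{uu}(u,0)\cdot\nu(u,0)\bigr)/|f_u(u,0)|^2$. The first thing I would record is that the K-orthogonal normalization $\rho(u,0)=1$ in \eqref{cor:normal} gives $|f_u(u,0)|^2=\rho(u,0)^2=1$, so the denominator is constantly $1$ along the $u$-axis and the whole computation reduces to evaluating $f_{uu}\cdot\nu$ there.

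The key step is the frame computation for $f_{uu}\cdot\nu$. Differentiating $f_u=\rho\,\hat{\vect{e}}_1$ gives $f_{uu}=\rho_u\,\hat{\vect{e}}_1+\rho\,(\hat{\vect{e}}_1)_u$, and since $\hat{\vect{e}}_1\cdot\nu=0$ the first term drops out, leaving $f_{uu}\cdot\nu=\rho\,\bigl((\hat{\vect{e}}_1)_u\cdot\nu\bigr)$. Differentiating the orthogonality relation $\hat{\vect{e}}_1\cdot\nu=0$ yields $(\hat{\vect{e}}_1)_u\cdot\nu=-\hat{\vect{e}}_1\cdot\nu_u$, and \eqref{eq:nunv0} together with $A=\nu_u\cdot\hat{\vect{e}}_1$ (cf.~\eqref{eq:abcd}) gives $\hat{\vect{e}}_1\cdot\nu_u=A$. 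Hence $f_{uu}\cdot\nu=-\rho A$, and evaluating on the $u$-axis with $\rho(u,0)=1$ and $A(u,0)=a(u)$ (the initial condition \eqref{eq:AC-ini}) produces $f_{uu}(u,0)\cdot\nu(u,0)=-a(u)$. Combining with the denominator $1$ gives $\kappa_n(u)=-a(u)$, which is \eqref{eq:a}. None of these steps is a serious obstacle; the only care needed is bookkeeping of which differentiated identity kills which term.

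For the final claim, suppose $p$ is an $A_2$ semi-definite point and $\gamma=\sigma$ is the characteristic curve, so by Proposition \ref{prop:K-coord} the $u$-axis parametrizes $\sigma$. Along the $u$-axis the area density vanishes, $\lambda(u,0)=0$, so \eqref{eq:fufv0} forces $f_v(u,0)=\vect{0}$; this is precisely the condition that the K-orthogonal coordinates are \emph{adjusted} (Definition \ref{def:admissible}) at every point $\sigma(u)=(u,0)$. Consequently the limiting normal curvature at $\sigma(u)$, computed by \eqref{eq:kn0}, is $\kappa_\nu\bigl(\sigma(u)\bigr)=\bigl(f_{uu}(u,0)\cdot\nu(u,0)\bigr)/|f_u(u,0)|^2$, which is the very same quotient just evaluated. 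Therefore $\kappa_\nu\bigl(\sigma(u)\bigr)=-a(u)=\kappa_n(u)$, giving the asserted coincidence of the normal curvature function with the limiting normal curvature function along $\gamma$. The mild point to verify here is that the coordinates are adjusted all along $\sigma$ (not merely at $p$), which follows immediately from $\lambda(u,0)\equiv 0$.
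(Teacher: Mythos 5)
Your argument for \eqref{eq:a} is correct and is essentially the paper's own computation: differentiate $f_u=\rho\,\hat{\vect{e}}_1$, pair with $\nu$ so that the tangential term drops, and use $(\hat{\vect{e}}_1)_u\cdot\nu=-\hat{\vect{e}}_1\cdot\nu_u=-A$ together with $\rho(u,0)=1$ and $A(u,0)=a(u)$. (The paper additionally records the frame expressions for $f_{uv}$ and $f_{vv}$, but those are needed elsewhere, not for this proposition.) The genuine difference lies in the \lq moreover\rq\ claim: the paper settles it by citing \cite[(2.2) and (2.3)]{MSUY}, while you verify it directly --- $\lambda(u,0)\equiv 0$ forces $f_v(u,0)=\vect{0}$ by \eqref{eq:fufv0}, so the K-orthogonal coordinates (after the trivial re-centering at each $\sigma(u_0)$) are adjusted in the sense of Definition \ref{def:admissible} at every point of the characteristic curve, and then \eqref{eq:kn0} and \eqref{eq:kn1} are literally the same quotient. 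This buys a more self-contained proof of the coincidence; what it still borrows from \cite{MSUY} is only the well-definedness of $\kappa_\nu$ (its independence of the choice of adjusted coordinates), which the paper already invokes when stating \eqref{eq:kn0}. Your argument also implicitly requires that each $\sigma(u)$ is a non-degenerate singular point of $f$, so that $\kappa_\nu(\sigma(u))$ is defined at all; this is immediate in context from Fact \ref{fact:p-back} and condition \eqref{eq:k2}, but is worth stating.
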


\begin{proof}
 Let $\hat{\vect{e}}_1,\hat{\vect{e}}_2$ be
 vector fields given in \eqref{eq:e1e2}.
 By \eqref{eq:nunv0}, we have
 \[
    (\hat{\vect{e}}_1)_u\cdot \nu=-
           \hat{\vect{e}}_1\cdot \nu_u=-A.
 \]
 Together with \eqref{eq:n1}, we have
 $(\hat{\vect{e}}_1)_u=\alpha \hat{\vect{e}}_2-A\nu$.
 Similarly, we have
 \[
   (\hat{\vect{e}}_1)_v=\beta \hat{\vect{e}}_2-B\nu, \quad
   (\hat{\vect{e}}_2)_u=-\alpha \hat{\vect{e}}_1-C\nu, \quad
   (\hat{\vect{e}}_2)_v=-\beta \hat{\vect{e}}_1-D\nu.
 \]
 Differentiating \eqref{eq:fufv0} using the above formulas, 
 we have that
 \begin{alignat}{1}
  \label{eq:fuu}
  f_{uu}
           &=\rho_u\,\hat{\vect{e}}_1+\rho\alpha \,\hat{\vect{e}}_2 
-\rho A \,\nu,\\
  \label{eq:fuv}
  f_{uv}
     &=\rho_v\,\hat{\vect{e}}_1+\rho\beta \,\hat{\vect{e}}_2 -\rho B \,\nu,\\
  f_{vv}
     &=  -\lambda \rho^{-1}\beta \,\hat{\vect{e}}_1 + (\lambda
  \rho^{-1})_v 
            \,\hat{\vect{e}}_2  -  \lambda \rho^{-1}D \,\nu.
 \end{alignat}
 Since $\rho(u,0)=1$, we have
 \begin{equation}\label{eq:kn}
   \kappa_n(u)=\frac{f_{uu}(u,0)\cdot \nu(u,0)}{\rho(u,0)^2}=
     -A(u,0)=-a(u).
 \end{equation}
 If $\gamma$ is a characteristic curve parametrizing $A_2$ 
 semi-definite points,
 then by \cite[(2.2) and (2.3)]{MSUY}
 the limiting normal curvature $\kappa_\nu(u)$
 defined by \eqref{eq:kn0} 
 coincides with $\kappa_n(u)$
 defined by \eqref{eq:kn1}.
\end{proof}
\begin{proof}{Proof of Theorem~\ref{thm:main0}}
 The existence of isometric realization of $ds^2$ 
 as a frontal map has been proved.
 So we need to prove the remaining properties.
 Since $a(0)\ne 0$, 
 \eqref{eq:a} implies that $\kappa_n(0)\ne 0$.
 By \eqref{eq:kn0}, this $\kappa_n(0)$ is equal to
 the limiting normal curvature of $p$.
 So the first assertion of Theorem A is obtained.
 Assertions (1)--(4) follow immediately from 
 Proposition \ref{prop:Honda}.
\end{proof}
\subsection{Proofs of Theorem \ref{thm:main} and 
    Corollaries \ref{cor:main}, \ref{cor:germ-A2}, \ref{cor:cw}}

We next give  preparations to prove 
Theorem B. As shown in the previous subsection, 
for given real analytic function germs $a(u)$, $c(u)$
at $u=0$ satisfying $a(0)\ne 0$, 
we constructed a real analytic frontal
$f_{a,c}:V\to \R^3$
whose first fundamental form was $ds^2$
satisfying \eqref{eq:fufv0}
and \eqref{eq:nunv0}.
The congruence class of $f_{a,c}$ is determined 
from the initial data $a(u)$, $c(u)$ as follows.

\begin{lemma}
\label{rmk:U}
 Two frontals $f_{a,c}$ and 
 $f_{\tilde a, \tilde c}$
 are mutually strongly congruent if
 and only if
 $(\tilde a,\tilde c)=\epsilon (a, c)$
 for some $\epsilon \in \{1,-1\}$.
\end{lemma}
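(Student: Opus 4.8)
The plan is to reduce the statement to the recovery of the full second fundamental data from the pair $(A,C)$, combined with Corollary \ref{cor:Need2}. Recall that $f_{a,c}$ is produced by solving the normal-form system \eqref{eq:standard_form} for $(A,C)$ under the initial condition \eqref{eq:AC-ini}, and then defining $B,D$ through \eqref{eq:BD}; the quadruple $(A,B,C,D)$ so obtained is the second fundamental data of $f_{a,c}$ in the common {\rm K}-orthogonal coordinates $(u,v)$. Since $f_{a,c}$ and $f_{\tilde a,\tilde c}$ share the first fundamental form $ds^2$, Corollary \ref{cor:Need2} says that these two frontals are strongly congruent if and only if their second fundamental data agree up to a global sign, i.e.\ $(\tilde A,\tilde B,\tilde C,\tilde D)=\epsilon(A,B,C,D)$ for some $\epsilon\in\{1,-1\}$.

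For the ``only if'' direction I would restrict this identity to the $u$-axis $\{v=0\}$: by \eqref{eq:AC-ini} it yields $\tilde a=\tilde A(\cdot,0)=\epsilon A(\cdot,0)=\epsilon a$ and, likewise, $\tilde c=\epsilon c$, so $(\tilde a,\tilde c)=\epsilon(a,c)$, as desired.

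For the ``if'' direction, the case $\epsilon=1$ is immediate from the uniqueness in Fact \ref{fact:CK}: equal initial data give the same solution $(A,C)$ of \eqref{eq:standard_form}, hence the same $(B,D)$ via \eqref{eq:BD}, and Corollary \ref{cor:Need2} applies. The substantive case is $\epsilon=-1$, and the crux is the observation that the right-hand sides of \eqref{eq:standard_form} are \emph{odd} under the substitution $(A,C)\mapsto(-A,-C)$, the coefficients $\lambda,E,\alpha,\beta,\check{K}$ being independent of $(A,C)$. The terms $(\lambda C/E)_u$ and $\beta C$ are linear in $C$; the quotient $(E\check{K}+\lambda C^2)/(EA)$ splits as $\check{K}/A$ plus $\lambda C^2/(EA)$, and both pieces are odd because of the single factor $A$ in the denominator, with the analogous accounting for the second equation. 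Consequently, if $(A,C)$ solves \eqref{eq:standard_form} with data $(a,c)$, then $(-A,-C)$ solves the same system with data $(-a,-c)$, so uniqueness forces $(\tilde A,\tilde C)=(-A,-C)$; feeding this back into \eqref{eq:BD} gives $\tilde B=-B$ and $\tilde D=-D$, whence $(\tilde A,\tilde B,\tilde C,\tilde D)=-(A,B,C,D)$ and Corollary \ref{cor:Need2} once more yields strong congruence.

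The only genuinely delicate step is this oddness check: because the nonlinearity enters through division by $A$, a naive homogeneity count is misleading, and one must verify directly that the inhomogeneous term $\check{K}/A$ as well as the quadratic term $\lambda C^2/(EA)$ change sign correctly. Granting this, the equivalence follows formally.
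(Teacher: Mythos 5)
Your proof is correct and follows essentially the same route as the paper's: the paper likewise observes that the initial data $(-a,-c)$ produce the solution $(-A,-C)$ of \eqref{eq:standard_form}, hence the quadruple $(-A,-B,-C,-D)$ via \eqref{eq:BD}, and then invokes Proposition \ref{lem:compatibility} and Corollary \ref{cor:Need2}. Your write-up merely makes explicit two points the paper leaves implicit, namely the oddness verification of the right-hand sides of \eqref{eq:standard_form} and the restriction-to-the-$u$-axis argument for the \lq only if\rq\ direction.
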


\begin{proof}
 The initial data $(a,c)$ and $(-a,-c)$
 induce the solutions $(A,C)$ and $(-A,-C)$
 of \eqref{eq:standard_form}, respectively.
 By \eqref{eq:BD}, $(A,C)$ and $(-A,-C)$
 induce quadruples $(A,B,C,D)$ and $(-A,-B,-C,-D)$
 satisfying
 \eqref{eq:Codazzi_1},
 \eqref{eq:symm} and
 \eqref{eq:egregium}.
 By Proposition~\ref{lem:compatibility}
 and Corollary \ref{cor:Need2},
 $(A,B,C,D)$ and $(-A,-B,-C,-D)$ induce
 the same frontal up to strong congruence.
\end{proof}

We next compute the geodesic curvature of $\hat \gamma(u):=f(u,0)$,
where $f:=f_{a,c}$.

\begin{proposition}
\label{prop:geod}
 Let $\kappa_g(u)$ be the geodesic curvature function 
 of $f$ along the curve $\gamma(u)=(u,0)$
 {\rm(}$\kappa_g(u)$ coincides with the singular curvature
 when the $u$-axis parametrizes the characteristic curve{\rm)}.
 By adjusting the sign of $\kappa_g(u)$, it holds that
 $\alpha(u,0)=\kappa_g(u)$.
\end{proposition}

\begin{proof}
 Since $\rho=1$ on the $u$-axis, we have
 \[
    \kappa_g=\frac{f_{uu}\cdot \hat{\vect{e}}_2}{f_u\cdot f_u}
        =\frac{\alpha}{\rho}=\alpha
 \]
 along the $u$-axis, proving the assertion.
\end{proof}

Propositions \ref{prop:43} and \ref{prop:geod}
lead to the following:
\begin{corollary}
\label{cor:kappa}
 The curvature function $\kappa(u)$
 of $\hat\gamma(u)=f(u,0)$ as a regular space curve
 is given by
 \begin{equation}\label{eq:kappa000}
  \kappa(u)=\sqrt{\alpha(u,0)^2+a(u)^2}.
 \end{equation}
\end{corollary}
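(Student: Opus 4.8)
The plan is to exploit the classical fact that, for a curve lying on a surface, the curvature of the space curve splits orthogonally into its geodesic and normal components, so that the two preceding propositions combine to give the result. Along the $u$-axis, $\hat\gamma(u)=f(u,0)$ is a regular space curve because $\gamma'(0)$ is not a null vector. First I would observe that $u$ is in fact the arc-length parameter of $\hat\gamma$: since $\rho(u,0)=1$ by \eqref{cor:normal}, equation \eqref{eq:fufv0} gives $\hat\gamma'(u)=f_u(u,0)=\hat{\vect{e}}_1(u,0)$, which is a unit vector.

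Since $u$ is arc-length, the curvature is simply $\kappa(u)=|\hat\gamma''(u)|=|(\hat{\vect{e}}_1)_u(u,0)|$. Here I would invoke the frame-derivative formula $(\hat{\vect{e}}_1)_u=\alpha\,\hat{\vect{e}}_2-A\,\nu$ established in the proof of Proposition \ref{prop:43}. Evaluating at $v=0$ and using that $\{\hat{\vect{e}}_1,\hat{\vect{e}}_2,\nu\}$ is orthonormal, together with the initial condition $A(u,0)=a(u)$ from \eqref{eq:AC-ini}, I obtain
$$
|(\hat{\vect{e}}_1)_u(u,0)|^2=\alpha(u,0)^2+A(u,0)^2=\alpha(u,0)^2+a(u)^2,
$$
which is exactly \eqref{eq:kappa000} after taking the square root.

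Equivalently, the statement follows at once by combining the two preceding results with the Pythagorean relation $\kappa^2=\kappa_g^2+\kappa_n^2$ for a curve on a surface: Proposition \ref{prop:43} gives $\kappa_n=-a$, hence $\kappa_n^2=a^2$, while Proposition \ref{prop:geod} gives $\kappa_g=\alpha(\cdot,0)$, hence $\kappa_g^2=\alpha(\cdot,0)^2$. There is essentially no genuine obstacle here; the only points that require a moment of care are verifying that $u$ really is the arc-length parameter (which hinges entirely on $\rho(u,0)=1$) and keeping track of the sign conventions in the frame-derivative formula, neither of which is delicate.
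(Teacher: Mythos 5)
Your proposal is correct and takes essentially the same route as the paper: the paper states this corollary as an immediate consequence of Propositions \ref{prop:43} and \ref{prop:geod}, i.e.\ exactly the orthogonal (Pythagorean) combination $\kappa^2=\kappa_g^2+\kappa_n^2$ with $\kappa_n=-a$ and $\kappa_g=\alpha(\cdot,0)$ that you give. Your direct computation via $\hat\gamma''(u)=(\hat{\vect{e}}_1)_u(u,0)=\alpha(u,0)\,\hat{\vect{e}}_2-a(u)\,\nu$, using $\rho(u,0)=1$ to see that $u$ is arc-length, is just an unpacking of the same frame formulas and is also correct.
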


We next compute the torsion function of $\hat \gamma$.

\begin{proposition}
\label{prop:C}
 The torsion function $\mu(u)$ of 
 $\hat \gamma(u)$ satisfies
 \begin{equation}\label{eq:C}
  \mu(u)=-c(u)+\frac{a(u) \alpha_u(u,0)-\alpha(u,0) a'(u)}{\kappa(u)^2},
 \end{equation}
 where $\kappa(u)$ is the curvature function of $\hat\gamma(u)$.
\end{proposition}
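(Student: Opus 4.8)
The plan is to read off the torsion of the unit-speed space curve $\hat\gamma(u)=f(u,0)$ from the standard Frenet formula $\mu=\det(\hat\gamma',\hat\gamma'',\hat\gamma''')/\kappa^2$, expressing each derivative in the orthonormal frame $(\hat{\vect{e}}_1,\hat{\vect{e}}_2,\nu)$ restricted to the $u$-axis. Since $\rho(u,0)=1$, formula \eqref{eq:fufv0} gives $\hat\gamma'(u)=f_u(u,0)=\hat{\vect{e}}_1$, so $u$ is already the arc-length parameter and $\hat{\vect{e}}_1$ is the unit tangent; this is exactly what makes the clean Frenet formula available.

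First I would record the frame derivatives along the $u$-axis, which were already assembled in the proof of Proposition \ref{prop:43}: namely $(\hat{\vect{e}}_1)_u=\alpha\,\hat{\vect{e}}_2-A\,\nu$, $(\hat{\vect{e}}_2)_u=-\alpha\,\hat{\vect{e}}_1-C\,\nu$, and $\nu_u=A\,\hat{\vect{e}}_1+C\,\hat{\vect{e}}_2$. Differentiating $\hat\gamma'=\hat{\vect{e}}_1$ and using $\rho_u(u,0)=0$ (which holds because $\rho(u,0)\equiv 1$) gives $\hat\gamma''=\alpha\,\hat{\vect{e}}_2-A\,\nu$, consistent with \eqref{eq:fuu}. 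Differentiating once more and substituting the structure equations yields $\hat\gamma'''=-(\alpha^2+A^2)\hat{\vect{e}}_1+(\alpha_u-AC)\hat{\vect{e}}_2-(A_u+\alpha C)\nu$, where all coefficients are evaluated on the $u$-axis.

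Next I would compute the two ingredients of the torsion formula. Using the positively oriented frame relations $\hat{\vect{e}}_1\times\hat{\vect{e}}_2=\nu$ and $\hat{\vect{e}}_1\times\nu=-\hat{\vect{e}}_2$, one gets $\hat\gamma'\times\hat\gamma''=\alpha\,\nu+A\,\hat{\vect{e}}_2$. Pairing this with $\hat\gamma'''$ and using orthonormality collapses everything to $(\hat\gamma'\times\hat\gamma'')\cdot\hat\gamma'''=-C(\alpha^2+A^2)+A\alpha_u-\alpha A_u$. By Corollary \ref{cor:kappa} we have $\kappa^2=\alpha(u,0)^2+a(u)^2=\alpha^2+A^2$ on the $u$-axis, so dividing by $\kappa^2$ absorbs the first term into $-C$ and leaves $\mu=-C+(A\alpha_u-\alpha A_u)/\kappa^2$. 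Substituting the initial data $A(u,0)=a(u)$, $C(u,0)=c(u)$, and $A_u(u,0)=a'(u)$ from \eqref{eq:AC-ini} then produces exactly \eqref{eq:C}.

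The computation is essentially routine once the frame is in place, so the only genuine points requiring care are bookkeeping ones: correctly using $\rho_u(u,0)=0$ so that no spurious $\hat{\vect{e}}_1$-term pollutes $\hat\gamma''$, keeping the cross-product signs of the right-handed frame consistent, and checking that the sign convention for $\mu$ is compatible with the $\pm$-adjustment of the frame used for $\kappa_g$ in Proposition \ref{prop:geod}. The key simplification, and the reason the final expression is so compact, is the cancellation $\kappa^2=\alpha^2+A^2$ supplied by Corollary \ref{cor:kappa}, which converts the leading $-C\kappa^2$ term into the plain $-c(u)$ appearing in \eqref{eq:C}.
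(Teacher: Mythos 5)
Your proposal is correct and follows essentially the same route as the paper: both use the triple-product formula for torsion of the unit-speed curve $\hat\gamma(u)=f(u,0)$, expand $\hat\gamma''$ and $\hat\gamma'''$ in the frame $(\hat{\vect{e}}_1,\hat{\vect{e}}_2,\nu)$ via the structure equations from Proposition \ref{prop:43}, and invoke Corollary \ref{cor:kappa} to replace $a^2+\alpha^2$ by $\kappa^2$. The only cosmetic difference is that the paper computes $f_{uuu}$ only modulo $\hat{\vect{e}}_1$ and expands the determinant directly, whereas you compute the full $\hat\gamma'''$ and pair it with $\hat\gamma'\times\hat\gamma''$; these are the same calculation.
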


\begin{proof}
 It is well-known that
 \[
   \kappa(u)^2\mu(u)=\frac{\det(\hat \gamma'(u),
   \hat \gamma''(u),\hat \gamma'''(u))}{|\hat \gamma'(u)|^6}.
 \]
 Since $|\hat \gamma'(u)|=\rho(u,0)=1$, we have
 \begin{align*}
  \kappa(u)^2\mu(u)&=
  \left.\det(f_u,f_{uu},f_{uuu})\right|_{(u,v)=(u,0)}\\
  &=\left.\det(\hat{\vect{e}}_1,\alpha\hat{ \vect{e}}_2-a(u)\nu,f_{uuu}
  )\right|_{(u,v)=(u,0)}.
 \end{align*}

 So it is sufficient to compute
 $f_{uuu}$
 modulo a functional multiplication of $\hat {\vect{e}}_1$.
 Using the fact that $\rho=1$ along the $u$-axis, we have
 \[
   f_{uuu} \equiv
       \bigl(\alpha_u-AC
          \bigr)\hat {\vect{e}}_2
     +
    \bigr(-A_u-\alpha C\bigr)\nu \qquad \mod \hat{\vect{e}}_1.
 \]
 Then 
 \begin{align*}
  &\left.
  \kappa(u)^2\mu(u)
  =\det(f_u,f_{uu},f_{uuu})\right|_{(u,v)=(u,0)} \\
  &\phantom{\kappa(u)^2\mu(u)A}
  =\alpha_u(u,0)a(u)-\alpha(u,0) a'(u)-c(u)(a(u)^2+\alpha(u,0)^2).
 \end{align*}
 Since $a(u)^2+\alpha(u,0)^2=\kappa(u)^2$ holds
 by Corollary \ref{cor:kappa},
 we obtain the conclusion.
\end{proof}
\begin{proof}[Proof of Theorem \ref{thm:main}]
 We set
 \begin{equation}\label{eq:ac}
   a(u):=-e^{\omega(u)},\qquad 
   c(u):=-\mu(u)+\frac{a(u) \alpha_u(u,0)-
   \alpha(u,0) a'(u)}{a(u)^2+\alpha(u,0)^2},
 \end{equation}
 as the initial values of $A$ and $C$.
 Then we obtain a frontal $f=f_{a,c}$ whose first fundamental form is $ds^2$.
 Moreover, $\hat \gamma(u)$ has the normal curvature function $e^{\omega(u)}$
 and the torsion function $\mu(u)$.
 By this construction, the first, second, and third assertions are obvious.
 From now on, we prove the last assertion.
 Since $a(0)\ne 0$ and $\rho(0,0)=1$, we have 
 \[
   0\ne \nu_u(p)\cdot \hat{\vect{e}}_1(p)=\nu_u(p)\cdot 
           f_u(p)=-\nu(p)\cdot f_{uu}(p).
 \]
 Since $(u,v)$ is adjusted at $p$, we can conclude that
 $p$ is a non-$\nu$-flat point of $f$ (cf. Definition~\ref{def:gen}). 

 By replacing the unit normal vector field $\nu$ to $-\nu$,
 the sign of the limiting normal curvature is reversed.
 Hence, by \eqref{eq:kappa000} and
 Lemma \ref{rmk:U}, 
 the possibilities of $a(u)$ as the initial value of $A$
 are 
 $a(u)=e^{\omega(u)}$ or $-e^{\omega(u)}$.
 Since the case $a(u)=-e^{\omega(u)}$ produces $f=f_{a,c}$, and
 so, the other possibility is the case that
 \[
    \tilde a(u):=e^{\omega(u)}(=-a(u))
 \]
 as the initial data of $A$. In this case
 \[
   \tilde c(u):=
   -\mu(u)-\frac{\tilde a(u) \alpha_u(u,0)-\alpha(u,0) \tilde
   a'(u)}{\tilde a(u)^2+\alpha(u,0)^2}=-2\mu(u)-c(u)
 \]
 must be the initial data of $C$. 
 Since $\tilde a(u)=-a(u)$,
 Lemma \ref{rmk:U}
 yields that $f_{\tilde a,\tilde c}$
 is strongly congruent to $f_{a,c}$ if  
 $\tilde c(u)=-c(u)$,
 that is, $\mu(u)$ vanishes identically.
 So there are at most two possibilities of the congruence class for $f$, 
 unless $\mu$ is identically zero. 
 If $p$ is a regular point, then $f$ must be an
 immersion since $ds^2$ is positive definite.
 If $p$ is a non-parabolic singular point, 
 then $\nu$ must be an immersion, and
 $f$ is a wave front germ.
\end{proof}

\begin{proof}[Proof of Corollary \ref{cor:main}]
 As seen in the above proof of Theorem \ref{thm:main},
 $f$ is uniquely determined by the initial data $a(u)$ and $c(u)$,
 and depends on them real analytically.
 Since $a$ and $c$ can be written explicitly 
 in terms of $\omega$ and $\mu$ as in 
 \eqref{eq:ac}, we obtain the assertion.
\end{proof}
\begin{proof}[Proof of Corollary \ref{cor:germ-A2}]
 Let $\kappa_\nu(t)$ and $\kappa_s(t)$
 be the limiting normal curvature and
 the singular curvature (cf.~\cite{HHNSUY})
 of the characteristic curve $\sigma(t)$, respectively.
 Since we may assume $\kappa_\nu(0)>0$, there exists a real 
 analytic function $\omega(t)$ such that
 $\kappa_\nu(t)=e^{\omega(t)}$.
 Then the curvature function $\kappa(t)$ of $\hat \sigma(t)$
 as a regular space curve is given by
 \begin{equation}\label{eq:kappa0}
  \kappa(t)=\sqrt{\kappa_\nu(t)^2+\kappa_s(t)^2}.
 \end{equation}
 By Fact \ref{fact:k_n=k_nu}, $\kappa_\nu(t)$ coincides with
 the normal curvature of $f$ along $\sigma(t)$.
 Let $\mu(t)$ be the torsion function of the space curve $\Gamma(t)$.
 Since $p$ is an $A_2$ semi-definite point, $\sigma'(0)$ is not a
 null vector, and
 so, by Theorem~\ref{thm:main},
 there exists a real analytic frontal germ $g$ at $p$ whose normal
 curvature and torsion along $g\circ \sigma(t)$ coincide with
 $\kappa_\nu(t)$ (cf.\ \eqref{eq:kappa0})
 and $\mu(t)$, respectively.
 By this construction, the curvature function 
 of the regular space curve $g\circ \sigma(t)$
 equals $\kappa(t)$, and $\mu(t)$ gives
 the torsion function of $g\circ \sigma(t)$.
 Since $g\circ \sigma(t)$ and $\Gamma(t)$ are
 parametrized by an arc-length parameter and
 have the same curvature and torsion,
 we can conclude that $\Gamma(t)=g\circ \sigma(t)$.
 The property that $g$ has a cuspidal edge or a 
 cuspidal cross cap
 at $p$ depends on the induced Kossowski
 metric of $f$ (cf.\ Theorem \ref{thm:main0}).
 Thus, if $p$ is a cuspidal edge (resp. cuspidal cross cap)
 with respect to $f$, then this is so  with respect to $g$, 
 too. 
 By the last  assertion of Theorem \ref{thm:main},
 the number of
 strong congruence classes of $g$ is at most two.
\end{proof}
\begin{proof}[Proof of Corollary \ref{cor:cw}]
 Since $f_0$ and $f_1$ are isometric (cf. Definition \ref{def:isom}), 
 there exists a local diffeomorphism $\phi$ such that
 $g_0:=f_0$ and $g_1:=f_1\circ \phi$ induce the same 
 Kossowski metric $ds^2$. Let $p$ be a semi-definite point of  $ds^2$,
 and $(u,v)$ a K-orthogonal coordinate system centered at $p$.
 We fix a unit normal vector field $\nu_i(u,v)$ of $f_i$,
 and then four real analytic functions 
 \[
    A_i(u,v),\,\, B_i(u,v),\,\, C_i(u,v),\,\, D_i(u,v)
 \]
 are determined. Then $(A_i,C_i)$ ($i=0,1$)
 can be considered as a solution of 
 \eqref{eq:standard_form} which induces $f_i$.
 We then set
 \[
    a_i(u)=A_i(u,0),\quad
    c_i(u)=C_i(u,0)\qquad (i=0,1).
 \]
 The sign of the limiting normal curvature of
 the characteristic curve of $f_i$ with respect to $\nu_i$
 is equal to the sign of $-a_i(u)$.
 So, as long as considering isometric deformations 
 with non-vanishing limiting normal curvature,
 the sign of $\kappa_\nu$ does not change.
 So, to deform $(f_0,\nu_0)$ to $(f_1,\nu_1)$
 continuously,
 we must adjust the sign of $\pm \nu_i$ ($i=0,1$).
 Replacing
 the sign of $\nu_i$
 of $f_i$ for each $i=0,1$ if necessary, 
 we may assume that
 $a_0(u),a_1(u)<0$,
 where we used the fact that 
 the limiting normal curvature of $f_i$ does not vanish.
 For each $s\in [0,1]$, we set
 \[
    a_s(u):=(1-s)a_0(u)+sa_1(u)(<0),\qquad
    c_s(u):=(1-s)c_0(u)+sc_1(u).
 \]
 Then, there exists a unique solution $(\tilde A_s(u,v),\tilde C_s(u,v))$
 of \eqref{eq:standard_form}
 satisfying 
 \[
    \tilde A_s(u,0)=a_s(u),\qquad
    \tilde C_s(u,0)=c_s(u).
 \]
 Then we obtain a family of frontals 
 $g_s:V(\subset U)\to \R^3$ ($0\le s\le 1$),
 interpolating between $g_0$ and $g_1$,
 that have the common first fundamental form $ds^2$.
 Since $a_s(u)<0$,
 the limiting normal curvature of each $g_s$ is positive.
 Then
 $f_t:=g_t\circ \phi$ $(0\le t\le 1)$,
 gives the desired deformation.
 The second assertion follows from the
 fact that $p$ is a cuspidal edge, a
 swallowtail or
 a cuspidal cross cap is determined by the properties of 
 the Kossowski metric $ds^2$ (cf.\ Proposition~\ref{prop:Honda}). 
\end{proof}

\begin{remark}\label{rmk:Tf}
 Let $f$ be a real analytic frontal  
 germ with singularities
 whose limiting normal curvature
 does not vanish.
 Let $T$ be an orientation
 reversing isometry of $\R^3$.
 Then $T\circ f$ has the same first fundamental form
 as $f$, but it is not trivial that $f$ can be 
 isometrically deformed into $T\circ f$.
 Let $\nu$ be the unit normal vector of
 $f$ such that $\kappa_\nu>0$ along the 
 characteristic curve.
 Then $T\circ f$ has the same limiting normal
 curvature $\kappa_\nu$ as $f$ if we
 choose $-dT\circ \nu$ as a normal vector field of 
 $T\circ f$.
 So the above proof yields that
 the pair $(f,\nu)$ can be isometrically deformed
 to $(T\circ f,-dT\circ \nu)$.
\end{remark}

\subsection{Realizations of Kossowski metrics with prescribed curvature lines}

We now construct a wave front whose first fundamental 
form is a given germ of Kossowski metric, and
with a given curve that is  a curvature line with a
prescribed normal curvature function.
For this purpose, we prepare the following fact, 
which is discussed in \cite{Kossowski}, 
\cite{MU}, and \cite{T}.
(Teramoto \cite{T} investigated 
the behavior of the principal 
curvature functions
near a non-degenerate singular point $p$
in terms of several geometric invariants
at $p$.)

\begin{fact}\label{thm:T}
 Let $f:U\to \R^3$ be a $C^r$-wave front, and $p\in U$ 
 a non-degenerate singular point whose limiting normal
 curvature does not vanish.
 Then there is a unique curvature line $\gamma$
 passing through $p$ such that the
 principal curvature function along it is bounded.
\end{fact}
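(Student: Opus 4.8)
The plan is to extract the statement from the curvature line equation written in terms of the second fundamental data $(A,B,C,D)$ of Section~\ref{sec3}. First I would fix a {\rm K}-orthogonal coordinate system $(U;u,v)$ centered at $p$ (Proposition~\ref{prop:K-coord}), so that $ds^2$ has the expression \eqref{cor:normal} with $\rho(u,0)=1$, and let $(A,B,C,D)$ be the second fundamental data of $f$, which satisfy \eqref{eq:Codazzi_1}, \eqref{eq:symm} and \eqref{eq:egregium} by Lemma~\ref{lem:000}. The two nonvanishing facts I need at $p$ are $A(p)\neq0$ and $D(p)\neq0$. Since $\lambda(p)=0$, the coordinate is adjusted at $p$, and a direct computation as in \eqref{eq:fuu} gives $\kappa_\nu(p)=-A(p)$; hence $A(p)\neq0$ by hypothesis. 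For the second, Corollary~\ref{cor:B} gives $B(p)=0$, so \eqref{eq:egregium} reads $A(p)D(p)=\check{K}(p)$ at $p$; and since $f$ is a wave front with nonvanishing limiting normal curvature it is non-parabolic (Fact~\ref{fact:front}), i.e.\ $\Omega(p)=\check{K}(p)\,du\wedge dv\neq0$, whence $D(p)=\check{K}(p)/A(p)\neq0$.

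Next I would write out the curvature line equation. Using $E=\rho^2$, $F=0$, $G=(\lambda/\rho)^2$ together with the second fundamental form coefficients $L=-\rho A$, $M=-\rho B=-\lambda C/\rho$ and $N=-\lambda D/\rho$, the classical equation $(EM-FL)\,du^2+(EN-GL)\,du\,dv+(FN-GM)\,dv^2=0$ has all three coefficients divisible by $\lambda$ (here \eqref{eq:symm} replaces $B$ by $\lambda C/\rho^2$). Dividing by $\lambda$ produces a real-analytic binary differential form whose restriction to the semi-definite set $\{\lambda=0\}$ factors as $-\rho\,du\,(C\,du+D\,dv)$. Its discriminant at $p$ equals $D(p)^2\neq0$, so near $p$ the two principal directions are distinct and each integrates to a smooth line field; their limiting directions at $p$ are the null direction $\partial_v$ (from $du=0$) and the transverse direction $D\,\partial_u-C\,\partial_v$ (from $C\,du+D\,dv=0$, genuinely transverse since $D(p)\neq0$).

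The curve $\gamma$ is then the unique integral curve through $p$ of the smooth line field tangent to $D\,\partial_u-C\,\partial_v$. To establish the boundedness dichotomy I would pass to the principal curvature equation $\lambda\kappa^2+(A\lambda/\rho+\rho D)\kappa+\check{K}=0$ coming from the Weingarten relations \eqref{eq:nunv0}. On $\{\lambda=0\}$ this is linear with the single finite root $\kappa=-\check{K}/(\rho D)$, equal to $-A(p)=\kappa_\nu(p)$ at $p$; since $D(p)\neq0$ this root extends real-analytically across the semi-definite set, so the principal curvature along $\gamma$ stays bounded. The product of the two roots is $\check{K}/\lambda=K$, which diverges at $p$ because $\check{K}(p)\neq0$, forcing the other root---attached to the $\partial_v$-family---to blow up. Hence $\gamma$ has bounded principal curvature, and it is the only curvature line through $p$ with this property: bounded principal curvature forces tangency to $D\,\partial_u-C\,\partial_v$ at $p$, and the integral curve of that line field through $p$ is unique.

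The step I expect to be the main obstacle is precisely this boundedness claim: the principal curvatures are eigenvalues of a Weingarten map that degenerates along the semi-definite set, while the Gaussian curvature $K$ itself diverges there, so one must follow both eigenvalues as $\lambda\to0$ and verify that exactly one remains finite. The clean route is the one above---read the bounded eigenvalue as the finite root of the quadratic and use $D(p)\neq0$ to extend it real-analytically, rather than manipulating $K$ directly---after which the remaining bookkeeping (divisibility by $\lambda$ and the factorization on $\{\lambda=0\}$) is routine once \eqref{eq:symm} and Corollary~\ref{cor:B} are available.
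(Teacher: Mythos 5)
Your proposal is correct, but it reaches the conclusion by a genuinely different route than the paper. The paper's proof gets the two principal\--direction fields near $p$ from the parallel\--surface trick: a non-degenerate singular point of a wave front is a regular, non-umbilic point of a suitable parallel front, and curvature lines are shared by parallel fronts (this is quoted from \cite{Kossowski} and \cite[Proposition 1.10]{MU}); it then takes $\gamma$ to be the integral curve of the field $X$ with $X_p$ non-null, observes via \eqref{eq:kn} that the principal curvature along $\gamma$ is the normal curvature $-A(u,0)$ of the regular space curve $f\circ\gamma$, hence bounded, and concludes uniqueness exactly as you do, from the unboundedness of $K$ at a non-parabolic point. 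You instead build everything inside the Section~\ref{sec3} formalism: dividing the curvature-line equation by $\lambda$, using \eqref{eq:symm} and Corollary~\ref{cor:B}, and checking the discriminant $D(p)^2\neq 0$ (via \eqref{eq:egregium} and Fact~\ref{fact:front}) to split it into two smooth line fields; then you track the finite root $-2\check K\bigl(b+\sqrt{b^2-4\lambda\check K}\,\bigr)^{-1}$, $b:=\rho D+\lambda A/\rho$, of the principal-curvature quadratic across $\{\lambda=0\}$. Each approach buys something: the paper's is shorter and its existence step needs no hypothesis beyond non-degeneracy of the front singularity (the parallel-surface argument works even when $\kappa_\nu(p)=0$), but it imports external facts; yours is self-contained, and it yields explicit data the paper's proof does not --- the bounded principal curvature as an analytic extension of $-\check K/(\rho D)$, equal to $\kappa_\nu(p)$ at $p$, and the two limiting directions $\partial_v$ and $D\partial_u-C\partial_v$ --- at the cost of already invoking non-parabolicity to define the line fields. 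One small point you assert rather than verify is the pairing of the finite root with the $D\partial_u-C\partial_v$ family: this does follow routinely, e.g.\ the principal direction for a root $\kappa$ is proportional to $\bigl((N-\kappa G)/\lambda,\,-M/\lambda\bigr)=\bigl(-D/\rho-\kappa\lambda/\rho^2,\,C/\rho\bigr)$, which tends to $(-D,C)$ at $p$ precisely for the finite root, so the identification (and hence your uniqueness step) is sound.
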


We call $\gamma$ the \emph{characteristic principal curvature line}.

\begin{proof}
 Each non-degenerate singular point 
 is a regular point on a suitable
 parallel surface of a given
 wave front,
 and the principal curvature 
 lines are common in the parallel surfaces.
 It has been shown that
 umbilical points of regular surfaces
 cannot be a singular points of their 
 parallel surfaces, and 
 two distinct $C^r$-vector fields $X,Y$ 
 of principal directions are
 defined on a sufficiently small
 neighborhood of
 non-degenerate singular points
 (cf.\ \cite{Kossowski} and  
 \cite[Proposition 1.10]{MU}).
 Since $X_p,Y_p$ are linearly independent,
 we may assume that $X_p$ is not a null vector,
 without loss of generality. 
 Let $\gamma(t)$ be the integral curve
 of $X$ such that $\gamma(0)=p$.
 Then we can take a {\rm K}-orthogonal coordinate neighborhood
 $(U; u,v)$ such that $\gamma'(0)=\partial_u$.
 Since $\gamma$ is an integral curve of
 $X$, the normal curvature function $\kappa_n$ along $\gamma$
 gives the principal curvature.
 Since $\kappa_n$
 is less than or equal
 to the curvature of $f\circ \gamma$ as a space curve,
 \eqref{eq:kn}
 yields that the function $\kappa_n$ along $\gamma$ 
 is bounded.

 On the other hand, 
 since the limiting normal curvature
 does not vanish, $\Omega(p)\ne 0$ holds, by
 Fact \ref{fact:front}. 
 So the Gaussian curvature is 
 unbounded at $p$.
 So the principal curvature line 
 passing through $p$
 as an integral curve of $Y$ 
 has unbounded principal curvature.
\end{proof}

\begin{definition}
\label{rem:geod}
 Let $p$ be a semi-definite point of a Kossowski metric $ds^2$
 and $\epsilon$ a positive number.
 A regular curve $\gamma(t)$ ($t\in [0,\epsilon]$)
 emanating from $p(=\gamma(0))$
 is called a \emph{special geodesic} if
 \begin{itemize}
  \item $\gamma'(0)$ is not a null vector,
  \item $ds^2$ is positive definite at
	$\gamma(t)$ $(0<t\le \epsilon)$,
  \item $\gamma((0,\epsilon])$  
	is the image of a geodesic with respect to $ds^2$.
 \end{itemize}
\end{definition}

We shall now prove the following:

\begin{theorem}\label{thml:main2}
 Let $ds^2$ be a real analytic Kossowski metric.
 Suppose that 
 $p\in M^2$ is a regular point or
 a non-parabolic semi-definite point
 of $ds^2$.
 We set
 \[
   m_p:=
  \begin{cases}
   \sqrt{K_p} & \mbox{if $p$ is a regular point with $K_p(>0)$}, \\
      \infty & \mbox{otherwise}, 
   \end{cases}
 \]
 where $K_p$ denotes the Gaussian curvature at $p$.
 Let
 $\gamma(t)$ 
 $(|t|<\epsilon)$ 
 be a regular curve
 on $M^2$ 
 such that $\gamma(0)=p$ and $\gamma'(0)$
 is not a null vector.
 Take a germ $\omega(t)$ $(|t|<\epsilon)$
 of a real analytic 
 function 
 satisfying $e^{\omega(t)}<m_p$ for $|t|<\epsilon$.
 Then there exists a real analytic 
 immersion $($resp. a wave front$)$ 
 $f:U\to \R^3$ defined on a neighborhood of $p$
 such that
 $\gamma$ is a curvature line
 and $e^\omega$ is 
 the principal curvature function
 along $\gamma$ $($i.e. if $p$ is a semi-definite point,
 $\gamma$ is a characteristic principal curvature line$)$.
 The congruence class of
 $f$ is uniquely determined.
 Moreover, if $\gamma$ is a special 
 geodesic, $\hat \gamma:=f\circ \gamma$ is a planar curve.
\end{theorem}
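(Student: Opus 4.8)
The plan is to run the construction from the proof of Theorem~B, replacing the prescription of the torsion by the line-of-curvature constraint. First I would use Proposition~\ref{prop:K-coord} to fix a real analytic K-orthogonal coordinate system $(u,v)$ centered at $p$ whose $u$-axis parametrizes $\gamma$; then $ds^2$ has the normal form \eqref{cor:normal} and $\gamma(u)=(u,0)$. With $\rho,\lambda$ as in \eqref{cor:normal} and $\alpha,\beta$ as in \eqref{eq:Christoffel}, I prescribe the Cauchy data $a(u):=-e^{\omega(u)}$ and $c(u):=0$ for the unknowns $A,C$ of the normal-form system \eqref{eq:standard_form}. Since $a(0)=-e^{\omega(0)}\neq 0$, the hypothesis $A(0,0)\neq 0$ underlying the reduction \eqref{eq:BD} holds, so Fact~\ref{fact:CK} yields real analytic $A,C$, hence $B,D$ by \eqref{eq:BD}, and Proposition~\ref{lem:compatibility} produces a real analytic frontal $f=f_{a,c}$ with first fundamental form $ds^2$ and second fundamental data $(A,B,C,D)$. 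By Theorem~A (equivalently Proposition~\ref{prop:Honda}), $f$ is an immersion when $p$ is regular and a wave front when $p$ is a non-parabolic semi-definite point.

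Next I would verify that $\gamma$ is a line of curvature bearing the prescribed principal curvature. In a K-orthogonal coordinate system $F=0$, so along $\gamma$ the direction $\partial_u$ is principal precisely when $\nu_u\cdot f_v=(\lambda/\rho)C$ vanishes; the datum $c\equiv 0$ gives $C(u,0)=0$, so this holds at every regular point of $\gamma$, and it extends through $p$ via the parallel-surface description in Fact~\ref{thm:T}. Thus $\gamma$ is a line of curvature, and since its normal curvature then equals the associated principal curvature, Proposition~\ref{prop:43} evaluates the latter as $-a(u)=e^{\omega(u)}$. The bound $e^{\omega}<m_p$ enters only to keep $p$ non-umbilic: when $p$ is regular the Gauss equation \eqref{eq:egregium} pins the companion principal curvature at $p$ to $K_p/e^{\omega(0)}$, and $e^{\omega(0)}<\sqrt{K_p}=m_p$ makes it strictly exceed $e^{\omega(0)}$, so the principal direction along $\gamma$ is unambiguous; when $p$ is a non-parabolic semi-definite point, $m_p=\infty$, the principal curvature $e^{\omega}$ along $\gamma$ is bounded, and Fact~\ref{thm:T} identifies $\gamma$ as the characteristic principal curvature line.

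For uniqueness I would invoke Lemma~\ref{rmk:U}, according to which the strong congruence class of $f_{a,c}$ depends on $(a,c)$ only up to a global sign. With $c\equiv 0$ fixed, the two admissible data $(-e^{\omega},0)$ and $(e^{\omega},0)$ are interchanged by this sign and so give strongly congruent frontals; passing from $\nu$ to $-\nu$ (an orientation-reversing isometry of $\R^3$) merely turns the principal curvature $e^{\omega}$ into $-e^{\omega}$ and does not change the congruence class, whence $f$ is congruence-unique. Finally, if $\gamma$ is a special geodesic in the sense of Definition~\ref{rem:geod}, its geodesic curvature vanishes on $(0,\epsilon]$, so $\alpha(u,0)=\kappa_g(u)$ vanishes there by Proposition~\ref{prop:geod}, and by real analyticity $\alpha(u,0)\equiv 0$ on the whole interval; substituting $\alpha(u,0)\equiv 0$ and $c\equiv 0$ into the torsion formula of Proposition~\ref{prop:C} gives torsion $\mu\equiv 0$, i.e.\ $\hat\gamma=f\circ\gamma$ is planar.

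The step I expect to be most delicate is the second paragraph: translating the geometric condition that $\gamma$ be a line of curvature into the single scalar Cauchy datum $c\equiv 0$, reconciling the sign conventions so that the prescribed value is the principal---rather than merely the normal---curvature, and using $m_p$ precisely to rule out the umbilic direction (and to single out the characteristic curvature line in the semi-definite case) rather than for existence, which is automatic from Fact~\ref{fact:CK}.
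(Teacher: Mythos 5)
Your proposal is correct and takes essentially the same route as the paper: the same Cauchy data $(a,c)=(-e^{\omega},0)$ fed into \eqref{eq:standard_form} via Fact \ref{fact:CK} and Proposition \ref{lem:compatibility}, uniqueness via Lemma \ref{rmk:U}, and planarity of $\hat\gamma$ from $\alpha(u,0)\equiv 0$ combined with \eqref{eq:CC} and Proposition \ref{prop:C}. The only cosmetic difference is that the paper checks the curvature-line property directly from \eqref{eq:fufv0} and \eqref{eq:nunv0} (since $C(u,0)=0$ forces $B(u,0)=0$ and hence $\nu_u\parallel f_u$ along the axis, a Rodrigues-type identity valid at the singular point as well), which avoids your detour through the shape operator at regular points followed by extension through $p$; your explanation of the role of $m_p$ (non-umbilicity at a regular $p$, and identification of $\gamma$ as the characteristic principal curvature line via Fact \ref{thm:T} at a semi-definite $p$) is consistent with, and more explicit than, the paper's.
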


\begin{proof}
 To adjust $\gamma$ 
 to be a curvature line, we set
 \begin{equation}\label{eq:CC}
  C(u,0)(=c(u))=0.
 \end{equation}
 Then $B(u,0)=0$ by Corollary \ref{cor:B}.
 By \eqref{eq:fufv0} and \eqref{eq:nunv0},
 the $u$-axis
 (i.e.\ $\gamma$) is a curvature line of $f$.
 If we set
 \[
   A(u,0)(=a(u)):=-e^{\omega(u)} \qquad (e^{\omega(u)}<m_p),
 \]
 then $e^{\omega(u)}$ coincides with
 the normal curvature function along $\gamma$.
 If we replace 
 $(a(u),c(u))=(-e^{\omega},0)$
 by $(e^{\omega},0)$, the congruence class
 of the  resulting wave front $f$ does not
 change (cf. Lemma \ref{rmk:U}).
 
 We next suppose that $\gamma$ is a special
 geodesic. Since $\alpha=0$, \eqref{eq:CC} and
 \eqref{eq:C} yield 
 $\mu(u)=0$,
 that is, $\hat \gamma$ lies in a plane.
\end{proof}

It should be remarked that special 
geodesics may not exist in general:

\begin{fact}[Remizov \cite{R}]\label{f:R}
 Let $p$ be a cuspidal edge on a wave front.
 If the singular curvature at $p$ is positive 
 {\rm(}resp.\ negative{\rm)},
 there are no {\rm(}resp.\ exactly two{\rm)} special 
 geodesics passing through $p$.
\end{fact}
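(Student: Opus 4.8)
The plan is to reduce the count to an elementary question about geodesics of a genuine (positive definite) Riemannian metric on a half-plane, obtained by unfolding the cuspidal edge. First I would choose a real analytic {\rm K}-orthogonal coordinate system $(u,v)$ centered at $p$ whose $u$-axis is the characteristic curve (Proposition~\ref{prop:K-coord}), so that $ds^2=E\,du^2+G\,dv^2$ with $E(u,0)=1$, $E_v(u,0)=0$, and, since $p$ is a cuspidal edge (i.e.\ an $A_2$ point, cf.~Fact~\ref{fact:a2a3}), $G=(\rho^{-1}\lambda)^2=v^2\psi$ with $\psi(u,0)>0$. On the sheet $\{v\ge0\}$ I would introduce the transverse arc-length coordinate $\xi:=\int_0^v\sqrt{G}\,dv'$, which satisfies $\xi\sim\frac12\sqrt{\psi(0,0)}\,v^2$; then $v^2$ becomes a function of $\xi$ and the metric takes the form $ds^2=\tilde E\,du^2+d\xi^2$ on the half-plane $\{\xi\ge0\}$, whose boundary $\{\xi=0\}$ is exactly the singular curve and on which $\tilde E(u,0)=1$. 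Thus each of the two sheets of the cuspidal edge is intrinsically isometric to a half-plane carrying a metric that is positive definite up to the boundary.

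Next I would translate the definition of a special geodesic into this picture. At $p$ the metric is $du^2$ on the complement of the nullity, so $\gamma'(0)=a\,\partial_u+b\,\partial_v$ is non-null precisely when $a\ne0$. Because $\sqrt G$ vanishes on the singular curve, one has $\dot\xi=\sqrt G\,\dot v$, and a curve with finite $\dot v(0)$ automatically satisfies $\dot\xi(0)=0$; conversely $\dot\xi(0)\ne0$ forces $v\sim\sqrt t$ and hence $|\dot v(0)|=\infty$, so that $\gamma'(0)$ is not a finite tangent vector of $M^2$. Therefore a special geodesic is nothing but a geodesic of $\tilde E\,du^2+d\xi^2$ that is tangent to the boundary at $p$ and lies in the open region $\{\xi>0\}$ for $t>0$; tangency to the boundary together with the unit-speed normalization $\tilde E\dot u^2=1$ leaves only the two unit directions $\pm\partial_u$, which determine a single geodesic through $p$ on each sheet.

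The decisive computation is then the sign of the transverse acceleration. For the boundary-tangent geodesic one has $\dot\xi(0)=0$ and $\dot u(0)^2=1$, so the geodesic equation gives $\ddot\xi(0)=-\Gamma^{\xi}_{uu}(0,0)=\frac12\tilde E_\xi(0,0)$. A short calculation (using $E_v(u,0)=0$ and $E_{vv}(u,0)=4h(u,0)$ in the representation of Theorem~\ref{thm:coord}) shows $\tilde E_\xi(0,0)=-2\kappa_s$, where $\kappa_s$ is the singular curvature at $p$; indeed $\kappa_s$ coincides with the geodesic curvature $\alpha(0,0)$ of the singular curve given by Proposition~\ref{prop:geod}. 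Hence $\ddot\xi(0)=-\kappa_s$, so that on each sheet the tangent geodesic enters the regular region $\{\xi>0\}$ if and only if $\kappa_s<0$. Consequently, when $\kappa_s>0$ no special geodesic passes through $p$, while when $\kappa_s<0$ there is exactly one on each of the two sheets $\{v>0\}$ and $\{v<0\}$, giving exactly two in all.

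The main obstacle will be the regularity of $\tilde E$ up to the boundary $\xi=0$. Writing $E=e^{2v^2h(u,v)}$ and $v=\sqrt{\,v^2(\xi)\,}$, the even part of $h$ contributes smoothly but its odd part produces a $\xi^{3/2}$-type term, so that $\tilde E$ is in general only $C^1$ across the singular curve---this is precisely the analytic manifestation of the cuspidal edge failing to unfold to a smooth surface. To make the geodesic argument rigorous I would therefore work with geodesics in the smooth interior $\{\xi>0\}$ and pass to the limit, using that $\tilde E_\xi$ extends continuously to the boundary with $\tilde E_\xi(0,0)=-2\kappa_s\ne0$ by hypothesis; the strict sign then persists in a neighborhood and yields the clean dichotomy without appealing to higher regularity. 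Establishing existence and uniqueness of the two limiting boundary-tangent geodesics, and checking that these candidates integrate to honest special geodesics rather than merely formal ones, is where the real care is needed.
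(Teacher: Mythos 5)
The paper contains no proof of this statement: it is imported as a Fact from Remizov \cite{R} (the authors only remark that it is a special case of \cite[Theorem 3]{R}, reformulated via singular curvature), and Remizov's argument proceeds by blowing up the singular geodesic ODE and counting separatrices, not by unfolding to a boundary metric. Measured against that, your proposal does isolate the correct mechanism---the sign of $\kappa_s$ decides whether candidate geodesics curve into the regular region, and your key identity $\tilde E_\xi(0,0)=-2\kappa_s$, hence $\ddot\xi(0)=-\kappa_s$, is correct---but it has two genuine gaps. The first is the analytic step you yourself flag, which is not a removable technicality but the entire content of the theorem. Your substitution $\xi=\int_0^v\sqrt{G}\,dv'$ does not even give the claimed form $\tilde E\,du^2+d\xi^2$: since $\xi_u\not\equiv 0$ the metric becomes $(E+\xi_u^2)\,du^2-2\xi_u\,du\,d\xi+d\xi^2$ (the cross term vanishes along the boundary, so your value of $\ddot\xi(0)$ survives, but the stated normal form is wrong). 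However one unfolds ($\xi$, or $w=v^2/2$), the coefficients acquire half-integer powers of the boundary-distance, so the unfolded metric is at best $C^1$ up to the boundary. Consequently Picard--Lindel\"of gives neither existence nor uniqueness of boundary-tangent geodesics, and the expansion $\xi(t)\approx\tfrac12\ddot\xi(0)t^2$ is not justified for an arbitrary special geodesic, which by Definition \ref{rem:geod} is only a regular ($C^1$) curve at $t=0$. The rigorous replacement is precisely Remizov's analysis: write the geodesic equation as $v''=\bigl(c^2-(v')^2\bigr)/v+O(1)$ with $c^2=2h(0,0)/\psi(0,0)$ (which has the sign of $-\kappa_s$), blow up, and identify special geodesics with the separatrices of the hyperbolic singular points sitting at the slopes $v'=\pm c$. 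Your ``pass to the limit'' sketch does none of this, and in particular the non-existence half ($\kappa_s>0$) needs an argument covering rays that are not twice differentiable at $p$.

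The second gap is that your count of ``exactly two'' rests on a false structural claim. You assert that the two unfolded directions $\pm\partial_u$ ``determine a single geodesic through $p$ on each sheet,'' i.e.\ that each special geodesic stays on one side of the singular curve. In fact the blown-up ODE (or an explicit Clairaut computation on a rotationally symmetric cuspidal edge such as $f(u,t)=((1+t^2)\cos u,(1+t^2)\sin u,t^3)$, for which $\kappa_s=-1$ and $c=1/\sqrt2$) shows that each of the two geodesics through $p$ crosses the singular curve transversally with slope $dv/du=\pm c$: the four geodesic rays emanating from $p$ pair into two curves joining the sheet $\{v>0\}$ to the sheet $\{v<0\}$, not into two one-sheeted curves. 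The one-sheeted object you describe is the union of the rays with directions $(1,c)$ and $(-1,c)$; these directions are not opposite, so this union has a corner at $p$ in $M^2$ and is not a geodesic passing through $p$ at all (its apparent smoothness in the unfolded picture is an artifact of the unfolding map failing to be a diffeomorphism at the boundary). Your total of two therefore comes out numerically right only because four rays get paired two-to-one under either pairing; the pairing you use, and the uniqueness statement of tangent geodesics for a non-$C^{1,1}$ metric that it relies on, are both incorrect.
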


Remizov investigated the geodesics 
of frontals whose singular set image consists of
regular space curves. 
Fact \ref{f:R} is a special case of
his result  \cite[Theorem~3]{R},
although he did not
formulate his results in terms of singular curvature.
We do not know if the above two special geodesics 
of cuspidal edges are real analytic or not 
when the wave front is real analytic.
Since a swallowtail can be considered as a limit of
cuspidal edges with negative singular curvature, it
can be expected that those two special geodesics converge to a
geodesic, and the following problem naturally arises:

\begin{question}\label{q:1}
 Is there a special geodesic at a given swallowtail?
\end{question}

Recently, Fukui \cite[Theorem 2.3 and
Remark 2.11]{F} showed the existence of 
a local coordinate system centered at each
swallowtail in $\R^3$ which shows that one of its 
coordinate line has the same $p$-th order
Taylor expansion
as the special geodesic, for each positive integer $p$.
In particular, the possibility of the existence of
a special geodesic is given
as a formal power series.

\section{Remaining problems}
\label{sec6}
In Corollary \ref{cor:germ-A2},
isometric deformations of cuspidal
edges and cuspidal cross caps 
that control their singular set images in $\R^3$
were obtained.
However, we cannot similarly discuss the
same problem for swallowtails, 
since the initial velocity  of the characteristic curve
is a null vector.
So the following question remains:

\begin{question}\label{q:2}
 For a given real analytic 
 space cusp $\hat \sigma$, is there a swallowtail
 having $\hat \sigma$ as the image of
 its singular set whose first fundamental form coincides with
 a given germ of non-parabolic $A_3$ semi-definite point of 
 a Kossowski metric?
\end{question}

Since a swallowtail is a limit point of
cuspidal edges, the last assertion 
of Corollary \ref{cor:germ-A2}
yields that the possibilities of such 
swallowtails are at most two.
However, the authors do not know of the existence of
two non-congruent swallowtails which have common 
first fundamental form and the same image for the singular set. 

\smallskip
By the way, 
the existence of isometric deformations of
cross caps is also an important remaining problem. 
In \cite{HHNUY}, non-trivial 
examples of isometric deformations of
cross caps are given. In \cite{HHNSUY},
a class of positive semi-definite metrics called
``Whitney metrics''
is defined. The first fundamental forms of cross caps are
Whitney metrics. So it is natural to ask:

\begin{question}\label{q:3}
 For a given real analytic germ of Whitney metric,
 is there a cross cap germ that is an isometric realization of it? 
\end{question}

In \cite{HNUY}, the authors found a solution 
of this as a formal power series, but
have not show the convergence.

\begin{ack}
 The authors thank the referee
 for valuable comments.
\end{ack}

\end{document}